\documentclass [11pt,oneside,a4paper,mathscr]{amsart}

\usepackage{amscd,amsmath,amssymb,euscript}
\usepackage[frame,cmtip,curve,arrow,matrix,line,graph]{xy}

\usepackage{todonotes}
\usepackage{url}

\date{\today}

\openup1.2\jot
\setlength{\topmargin}{0.1\topmargin}
\setlength{\oddsidemargin}{0.5\oddsidemargin}
\setlength{\evensidemargin}{0.5\oddsidemargin}
\setlength{\textheight}{1.02\textheight}
\setlength{\textwidth}{1.1\textwidth}

\title{Derived automorphism groups of   K3 surfaces of Picard rank 1}
\author{Arend Bayer}
\address{School of Mathematics and Maxwell Institute,
The University of Edinburgh,
James Clerk Maxwell Building,
The King's Buildings, Mayfield Road, Edinburgh, Scotland EH9 3JZ,
United Kingdom}
\email{arend.bayer@ed.ac.uk}
\urladdr{http://www.maths.ed.ac.uk/~abayer/}
\author{Tom Bridgeland}
\address{Department of Pure Mathematics, The University of Sheffield, The Hicks Building, Hounsfield Road, Sheffield, England S3 7RH, United Kingdom.}
\email{t.bridgeland@shef.ac.uk}
\urladdr{http://www.tom-bridgeland.staff.shef.ac.uk/}
\dedicatory{We dedicate this paper to Professor Mukai on the occasion of his 60th birthday. }

\newtheorem{thm}{Theorem}[section]
\newtheorem{conj}[thm]{Conjecture}
\newtheorem{cor}[thm]{Corollary}
\newtheorem{prop}[thm]{Proposition}
\newtheorem{lemma}[thm]{Lemma}
\theoremstyle{definition}
\newtheorem{defn}[thm]{Definition}
\newtheorem{assumption}[thm]{Assumption}

\newtheorem{thm*}[thm]{Theorem$^*$}
\newtheorem{remark}[thm]{Remark}
\newtheorem{remarks}[thm]{Remarks}

\newtheorem*{example*}{Example}

\newcommand {\A}{\mathcal A}

\newcommand{\D}{{D}}
\newcommand {\C}{\mathbb C}
\newcommand{\CC}{\mathcal{C}}
\newcommand{\h}{\mathfrak{h}}
\newcommand{\uu}{\mathcal{V}(X)}

\newcommand {\HH}{{\mathcal H}}

\newcommand{\N}{\mathcal{N}}
\renewcommand{\O}{\mathcal{O}}
\renewcommand{\P}{\mathcal{P}}
\newcommand{\W}{\mathcal{W}}
\newcommand{\Q}{\mathcal{Q}}
\newcommand{\R}{\mathbb{R}}
\newcommand{\Z}{\mathbb Z}
\newcommand{\half}{\frac{1}{2}}
\newcommand{\M}{\mathcal{M}}
\renewcommand{\check}{{\vee}}
\renewcommand{\L}{{\mathcal{L}}}
\newcommand{\T}{\mathcal{T}}

\newcommand{\into}{\ensuremath{\hookrightarrow}}
\newcommand{\onto}{\twoheadrightarrow}
\newcommand{\lra}{\longrightarrow}
\newcommand{\isom}{\cong}
\newcommand{\tensor}{\otimes}
\newcommand{\lRa}[1]{\stackrel{#1}{\lra}}

\newcommand {\Aut}{\operatorname{Aut}}

\newcommand{\Coh}{\operatorname{Coh}}

\newcommand{\GL}{\operatorname{GL}}
\newcommand {\Hom}{\operatorname{Hom}}

\renewcommand{\Im}{\operatorname{Im}}
\newcommand{\Stab}{\operatorname{Stab}}
\newcommand{\NS}{\operatorname{NS}}
\renewcommand{\Re}{\operatorname{Re}}
\newcommand{\St}{\operatorname{Stab_{red}}}

\newcommand{\CY}{{\operatorname{CY}}}
\newcommand{\ST}{\operatorname{Tw}}
\newcommand{\PP}{{\mathbb{P}}}
\newcommand{\red}{{\operatorname{red}}}

\newcommand{\blank}{-}

\def\dbydt#1{\frac{d #1}{d t}}

\def\abs#1{\left\lvert#1\right\rvert}
\def\td{\mathop{\mathrm{td}}\nolimits}

\subjclass[2010]{14F05 (Primary); 14J28, 14J33, 18E30 (Secondary)}

\usepackage{pgf,tikz}
\usetikzlibrary{arrows}

\begin{document}
\begin{abstract}{ We give a complete description of the group of exact autoequivalences of the
bounded derived category of coherent sheaves on a K3 surface of Picard rank 1. We do this by
proving that  a distinguished connected component of the space of stability conditions  is preserved by all autoequivalences, and is contractible.}
\end{abstract}
\maketitle



\setcounter{tocdepth}{1}
\tableofcontents
\section{Introduction}

Let $X$ be a smooth complex projective variety. We denote by  $\D(X)=\D^b\Coh(X)$ the bounded derived category of coherent sheaves on $X$, and  by $\Aut \D(X)$ the group of triangulated, $\C$-linear autoequivalences of $\D(X)$, considered up to  isomorphism of functors. There is a subgroup 
\[ \Aut_{\operatorname{st}} \D(X) \isom \Aut X \ltimes \operatorname{Pic}(X) \times \Z\]
of $\Aut \D(X)$ whose elements are called  \emph{standard} autoequivalences: it is
the subgroup generated by the operations of  pulling back by automorphisms of $X$ and tensoring by line bundles, together with the shift functor.

The problem of computing the full group $\Aut \D(X)$ is usually rather difficult.  Bondal and Orlov
proved that when the canonical bundle $\omega_X$ or its inverse  is ample,  all autoequivalences are standard: $\Aut \D(X) =
\Aut_{\operatorname{st}} \D(X)$.
The group $\Aut \D(X)$ is also known explicitly  when $X$ is an abelian variety, due to work of
Orlov \cite{Orlov:abelian_equivalences}. 
Broomhead and Ploog \cite{Broomhead-Ploog:toric} treated many rational surfaces (including most
toric surfaces).  However, no other examples are known to date. 

The aim of this paper is to determine the group $\Aut \D(X)$ in the case when $X$ is a K3 surface of Picard rank 1.

\subsection*{Mukai lattice} For the rest of the paper $X$ will denote a complex algebraic K3 surface. In analogy to the strong
Torelli theorem, which describes the group $\Aut X$ via its action on $H^2(X)$, one naturally starts studying
$\Aut \D(X)$ via its action on cohomology. We will briefly review the relevant results,
see \cite[Section 10]{Huybrechts:FM} for more details.

The cohomology group
\[ H^*(X, \Z) = H^0(X,\Z)\oplus H^2(X,\Z)\oplus H^4(X,\Z),\]
comes equipped with a polarized weight two Hodge structure, whose algebraic part  is given by
\[
\N(X) = H^0(X, \Z) \oplus \NS(X) \oplus H^4(X, \Z), \quad \NS(X) = H^2(X,\Z)\cap H^{1,1}(X,\C),
\]
and whose polarization is given by the Mukai symmetric form
\[\langle (r_1,D_1,s_1),(r_2,D_2,s_2)\rangle =D_1\cdot D_2-r_1s_2 -r_2s_1.\]
The lattice $H^*(X,\Z)$ has signature $(4, 20)$, and  the subgroup $\N(X)$ has signature
$(2,\rho(X))$, where the Picard rank $\rho(X)$ is  the rank of the N\'eron-Severi  lattice $\NS(X)$. 

Any object of $\D(X)$ has a Mukai vector $v(E) = \mathrm{ch}(E) \sqrt{\td X}\in \N(X)$, and Riemann-Roch takes the form
\[\chi(E,F)=\big.\sum_{i\in \Z} (-1)^i \dim_\C \Hom^i_X(E,F)=-(v(E),v(F)).\]
Since any autoequivalence is of Fourier-Mukai type, the Mukai vector of its kernel induces a
correspondence; its action on cohomology preserves the Hodge filtration, the integral structure and the Mukai pairing. We
thus get a map
\[\varpi\colon \Aut \D(X)\lra \Aut H^*(X)\]
to the group of Hodge isometries.

 The group $\Aut H^*(X)$ contains an index 2 subgroup $\Aut^+ H^*(X)$ of Hodge isometries
preserving the orientation of positive definite 4-planes.
Classical results due to Mukai and Orlov (\cite{Mukai:BundlesK3, Orlov:representability}) imply that
the  image of $\varpi$ contains $\Aut^+ H^*(X)$, see \cite{HLOY:autoequivalences, Ploog:thesis}.
A much more difficult recent result due to Huybrechts, Macr{\`{\i}} and Stellari \cite{HMS:Orientation} is that
the image of $\varpi$  is contained in (and hence equal to) $\Aut^+ H^*(X)$. Our results in this
paper give an alternative, very different  proof of this fact in the case when $X$ has Picard
rank 1.

To determine the group $\Aut \D(X)$ it  thus remains to study the kernel of $\varpi$, which we will denote by $\Aut^0 \D(X)$.  This group is highly non-trivial due to the existence of spherical twist functors.
Recall that an object $S\in \D(X)$ is called spherical if \[\Hom_{\D(X)}(S,S[i])=\begin{cases} \C &\text{ if }i\in\{0,2\},\\ 0&\text{ otherwise.}\end{cases}\]
Associated to any such object  there is a corresponding  twist or reflection functor $\ST_S\in \Aut
\D(X)$, which appeared implicitly already in \cite{Mukai:BundlesK3}, and which was studied in detail
(and generalized) in \cite{Seidel-Thomas:braid}. 
The functor $\ST_S$ acts on cohomology by a reflection in the hyperplane orthogonal to $v(S)$, and hence its square  $\ST^2_S$ defines an element of the group $ \Aut^0 \D(X)$.

\subsection*{Stability conditions} Following the approach introduced by the second author in \cite{Bridgeland:K3}, we study
$\Aut^0 \D(X)$ using a second group action, namely its action on the space of stability conditions.

We denote by
$\Stab(X)$ the space of (full, locally-finite)  numerical stability conditions $(Z, \P)$ on
$\D(X)$. This is a finite-dimensional complex manifold with a faithful action of the group $\Aut \D(X)$. The central charge of a numerical stability condition takes the form
\[Z(\blank)=\big(\Omega,v(\blank)\big)\colon K(\D)\to \C \]
for some $\Omega \in \N(X) \otimes \C$,
and the induced forgetful map $\Stab(X) \to \N(X) \otimes \C$ is a local homeomorphism by \cite{Bridgeland:Stab}. 

Let $\Stab^\dag(X)\subset \Stab(X)$ be the connected component containing the set of geometric stability conditions,
for which all skyscraper sheaves  $\O_x$ are stable of the same phase.
The main result of \cite{Bridgeland:K3} is a description of this connected component,
which we now review.

Recall that $\N(X)$ has signature $(2, \rho(X))$.  Define the open subset
\[\P(X)\subset \N(X)\tensor \C\] consisting of vectors $\Omega\in
\N(X)\tensor \C$ whose real and imaginary parts span a positive definite 2-plane in $\N(X)\tensor
\R$.
This subset has two connected components, distinguished by the orientation induced on this 2-plane;
let $\P^+(X)$ to be the component containing vectors of the
form $(1,i\omega, -\half\omega^2)$ for an ample class $\omega\in \NS(X)\tensor \R$. Consider the
root system
 \[\Delta(X)=\{\delta\in \N(X)\colon (\delta,\delta)=-2\},\]
and  the corresponding hyperplane complement
\[\P^+_0(X)=\P^+(X)\setminus \bigcup_{\delta\in \Delta(X)} \delta^\perp.\]
We
note that $\Delta(X)$ is precisely the set of Mukai vectors of spherical objects in $\D(X)$.

\begin{thm}[{\cite[Theorem 1.1]{Bridgeland:K3}}] 
\label{bridge}The forgetful map sending a stability condition to the associated vector $\Omega\in \N(X)\tensor \C$ induces a covering map
\begin{equation} \label{eq:coverK3}
\pi\colon \Stab^\dagger(X)\lra \P^+_0(X),
\end{equation}
The covering is normal, and the group of  deck transformations can be identified with the subgroup of $\Aut^0 \D(X)$
which preserves the connected component $\Stab^\dagger(X)$.
\end{thm}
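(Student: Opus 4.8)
\emph{Proof proposal.} The plan is to establish the three assertions---that $\pi$ is a covering map, that it is normal, and that its deck group is the stated subgroup of $\Aut^0 \D(X)$---in turn, with the covering property being the technical heart. Since the forgetful map $\Stab(X)\to\N(X)\tensor\C$ is a local homeomorphism, $\pi$ is automatically a local homeomorphism, so the first task is only to pin down the image. That $\pi(\Stab^\dagger(X))\subseteq\P^+(X)$ follows because the real and imaginary parts of the central charge $Z(\blank)=(\Omega,v(\blank))$ always span a positive-definite, correctly oriented $2$-plane: this holds for the explicit geometric stability conditions and is preserved under deformation within the connected component, since the $2$-plane cannot degenerate without $Z$ vanishing on some class. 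That the image avoids each hyperplane $\delta^\perp$ uses that a $(-2)$-class $\delta=v(S)\in\Delta(X)$ is the Mukai vector of a spherical object, which is rigid and hence semistable for every $\sigma\in\Stab^\dagger(X)$; therefore $Z(S)=(\Omega,\delta)\neq 0$, i.e.\ $\Omega\notin\delta^\perp$.

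The crux is to upgrade the local homeomorphism $\pi$ to a covering onto $\P^+_0(X)$. I would single out the subset $U(X)\subseteq\Stab^\dagger(X)$ of geometric stability conditions (those for which all $\O_x$ are stable of a common phase) and analyse its closure. The decisive input is a description of the boundary $\partial U(X)$: a stability condition leaves the geometric locus precisely when some spherical object $S$ becomes semistable of the same phase as the $\O_x$, and crossing the corresponding wall is realised, up to shift, by the twist $\ST_S$. Iterating this, one shows that $\Stab^\dagger(X)$ is covered by the translates of $\overline{U(X)}$ under the autoequivalences preserving the component. Local triviality of $\pi$ then reduces to a path-lifting statement: along any path in $\P^+_0(X)$ the lifted family of stability conditions does not degenerate. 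This is exactly where the signature $(2,\rho(X))$ and the root system $\Delta(X)$ enter: a family can fail to converge only if the central charge of a class $\alpha$ with $(\alpha,\alpha)\geq-2$ is driven to a wall, and remaining inside $\P^+_0(X)$ rules out precisely the collapse $Z(S)\to 0$ for spherical $S$. I expect this combined boundary-and-lifting analysis to be the main obstacle.

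Finally I would identify the deck group. Let $G\subseteq\Aut^0\D(X)$ be the subgroup of autoequivalences preserving $\Stab^\dagger(X)$. Any $g\in G$ satisfies $\varpi(g)=\id$, so it acts trivially on $\N(X)$ and hence $\pi\circ g=\pi$: every element of $G$ is a deck transformation. For the reverse inclusion and for normality it suffices to show that $G$ acts transitively on each fibre of $\pi$, which I would deduce from the tiling above by computing the monodromy. A small loop linking a hyperplane $\delta^\perp$ generates the local fundamental group, and its monodromy is realised by the square $\ST_S^2$ of the spherical twist: indeed $\ST_S$ covers the reflection $r_\delta\colon x\mapsto x+(x,\delta)\delta$, which is a nontrivial isometry of $\N(X)$, so only $\ST_S^2$ lies in $\Aut^0\D(X)$ and descends to a genuine deck transformation lying over the identity. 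As such loops generate $\pi_1(\P^+_0(X))$ and each acts through an element of $G$, the image of the monodromy representation coincides with $G$; the covering is therefore normal with deck group $G$, as claimed.
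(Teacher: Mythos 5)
The paper itself does not prove this statement: Theorem \ref{bridge} is imported wholesale from \cite[Theorem 1.1]{Bridgeland:K3}, so there is no internal proof to compare against. Your outline does follow the broad strategy of that reference (geometric chamber, wall-crossing by spherical twists, monodromy computation), but as a proof it has genuine gaps.

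The most concrete error is the assertion that a spherical object $S$ is ``rigid and hence semistable for every $\sigma\in\Stab^\dagger(X)$'', from which you conclude $Z(S)\neq 0$ and hence that the image of $\pi$ avoids $\delta^\perp$. Rigidity does not imply semistability: $\O_X$ is spherical and is destabilised in many stability conditions. What is true is that all stable factors of a rigid object are rigid (Lemma \ref{stab}), so $Z(\delta)$ is a positive integer combination of the nonzero central charges of spherical stable objects; ruling out cancellation in that sum is precisely the nontrivial content of $\pi(\Stab^\dagger(X))\subset\P^+_0(X)$, not a one-line consequence. The second, larger gap is that the covering property itself is never established: you reduce it to the statement that ``along any path in $\P^+_0(X)$ the lifted family of stability conditions does not degenerate'' and then declare this to be the expected main obstacle. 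That lifting statement \emph{is} the theorem; everything surrounding it in your sketch is soft. In \cite{Bridgeland:K3} this is where all the work lies: the explicit computation of $U(X)$ (Sections 10--11), the classification of its boundary walls (Theorem 12.1), and the proof that crossing each wall lands in the image of $U(X)$ under an explicit autoequivalence (Proposition 13.2), so that translates of $\overline{U(X)}$ tile the component and $\pi$ is controlled on each tile. Two smaller points on the deck-group identification: the meridian loops around the hyperplanes $\delta^\perp$ do not generate $\pi_1(\P^+_0(X))$ by themselves --- you also need the generator coming from the $\GL^+_2(\R)$-fibre, whose monodromy is the even shift --- and to identify $\mathrm{Deck}(\pi)$ with the subgroup $G\subset\Aut^0\D(X)$ preserving the component (rather than with a quotient of it) you must also show that no nontrivial element of $G$ acts trivially on $\Stab^\dagger(X)$, which requires a separate argument.
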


The Galois correspondence for the normal covering $\pi$ then gives a map
\[
\pi_1 \left(\P^+_0(X)\right) \to \Aut^0 \D(X),
\]
which is injective if and only if $\Stab^\dagger(X)$ is simply-connected, and
surjective if and only if $\Stab^\dagger(X)$ is preserved by $\Aut^0 \D(X)$.
This  suggests the  following conjecture of the second author:
\begin{conj}[{\cite[Conjecture 1.2]{Bridgeland:K3}}]
\label{kyoto}
The group $\Aut \D(X)$ preserves the connected component $\Stab^\dagger(X)$. Moreover, $\Stab^\dagger(X)$ is simply-connected. Hence  there is a short exact sequence of groups
\[1\lra \pi_1 (\P^+_0(X)) \lra \Aut \D(X) \lRa{\varpi} \Aut^+ H^*(X)\lra 1.\]
\end{conj}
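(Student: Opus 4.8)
The plan is to split the statement into two geometric assertions and then assemble them by the Galois correspondence of Theorem~\ref{bridge}: part (A), that every autoequivalence preserves the component $\Stab^\dagger(X)$, and part (B), that $\Stab^\dagger(X)$ is simply-connected. Granting both, the subgroup of $\Aut^0\D(X)$ preserving $\Stab^\dagger(X)$ is all of $\Aut^0\D(X)$ by (A), so Theorem~\ref{bridge} identifies the deck group with $\Aut^0\D(X)$; and (B) makes $\pi$ the universal cover, so the natural map $\pi_1(\P^+_0(X))\to\Aut^0\D(X)=\ker\varpi$ is an isomorphism. Moreover, since the forgetful map $\Stab(X)\to\N(X)\tensor\C$ is equivariant for $\varpi$, part (A) forces $\varpi(\Aut\D(X))$ to preserve the component $\P^+(X)$, hence to consist of isometries preserving the orientation of positive $2$-planes; this reproves the containment $\Im\varpi\subseteq\Aut^+H^*(X)$ of Huybrechts--Macr{\`{\i}}--Stellari. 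Combined with the Mukai--Orlov surjectivity onto $\Aut^+H^*(X)$ this yields $\Im\varpi=\Aut^+H^*(X)$, and the short exact sequence drops out.

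The next step is to pin down the topology of the base $\P^+_0(X)$, which simplifies drastically in Picard rank $1$, where $\N(X)$ has signature $(2,1)$. Sending an oriented positive-definite $2$-plane $P\subset\N(X)\tensor\R$ to the complex isotropic line it spans identifies the oriented Grassmannian with the upper half-plane $\h$, and since the scaling $\C^*$-action on $\Omega$ fixes the spanned plane, the forgetful map exhibits $\P^+(X)$ as a $\C^*$-bundle over $\h$. Each root $\delta\in\Delta(X)$ is the negative generator of $P^\perp$ for the unique plane $P=\delta^\perp$, so $\bigcup_{\delta}\delta^\perp$ meets $\h$ in a locally finite set of points, and $\P^+_0(X)$ is a $\C^*$-bundle over $\h$ minus this set. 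As the base is homotopy equivalent to a wedge of circles, the bundle is trivial and $\pi_1(\P^+_0(X))\isom\Z\times F$, with $\Z$ the fibre class and $F$ free on the punctures. I would then name the candidate deck transformations: the double shift $[2]$, which carries $\Omega$ once around the fibre and acts trivially on cohomology, generates the $\Z$; and for each spherical $S$ with $v(S)=\delta$ the squared twist $\ST_S^2\in\Aut^0\D(X)$ realises the monodromy around the corresponding puncture, supplying the free generators.

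The heart of the argument, and the step I expect to be hardest, is part (B): controlling the global topology of the covering $\pi$ well enough to prove $\pi_1(\Stab^\dagger(X))=1$ directly. The strategy is to exploit the wall-and-chamber decomposition of $\Stab^\dagger(X)$, in which the geometric stability conditions form a distinguished chamber, the walls are the loci where some spherical object changes stability, and crossing a wall is effected by a spherical twist. I would show that these chambers tile $\Stab^\dagger(X)$, that each maps homeomorphically to its image in $\P^+_0(X)$, and that the group generated by $[2]$ and the twists $\ST_S$ acts on the tiling with the geometric chamber as fundamental domain. Simple-connectedness then reduces to showing the dual exchange graph of this tiling is a \emph{tree}: any loop is homotoped into the $1$-skeleton, and one must rule out nontrivial cycles, i.e.\ check that the only relations among wall-crossings are those coming from the contractible fibre direction. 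In Picard rank $1$ the combinatorics of the $-2$-classes is rigid—essentially a Farey-type ordering of the spherical objects along the boundary of $\h$—and I expect the tree property to follow from an inductive analysis of adjacent walls.

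Finally, part (A) demands that \emph{every} autoequivalence, not merely the standard ones and the twists, preserves $\Stab^\dagger(X)$. The standard autoequivalences and the $\ST_S$ visibly do so, as they carry geometric stability conditions into the same component, and under $\varpi$ they surject onto $\Aut^+H^*(X)$ (the reflections in the $\delta^\perp$ generate the Weyl group, the standard part supplies the rest). Writing $G$ for the stabiliser of $\Stab^\dagger(X)$, this gives $\varpi(G)=\Aut^+H^*(X)$, while $G\cap\Aut^0\D(X)$ is the deck group, which by (B) is isomorphic to $\pi_1(\P^+_0(X))=\langle[2],\ST_S^2\rangle$. It then suffices to show that an arbitrary kernel element $\Phi\in\Aut^0\D(X)$ already lies in $G$: by equivariance $\Phi(\Stab^\dagger(X))$ is again a component covering $\P^+_0(X)$, and I would rule out exotic such components and reduce $\Phi$ to the subgroup generated by $[2]$ and the $\ST_S^2$ using the faithfulness of the $\Aut\D(X)$-action together with the explicit fundamental domain from (B). Disentangling this last circularity—establishing that $\Aut^0\D(X)$ is exactly $\langle[2],\ST_S^2\rangle$ and not larger—is the delicate companion to the tree property, and the two are best proved in tandem.
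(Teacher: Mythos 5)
Your reductions—Galois correspondence for the normal covering of Theorem~\ref{bridge}, the computation $\pi_1(\P^+_0(X))\isom\Z\times F$ with $F$ free on the punctures of $\h$, and the identification of the generators with $[2]$ and the $\ST_S^2$ (Proposition~\ref{prop:loopandtwist})—all match the paper. But the two assertions you isolate as (A) and (B) are exactly where the proposal stops short, and the strategies you sketch for them do not close. For (A) you concede the circularity yourself: "rule out exotic components... using the faithfulness of the $\Aut\D(X)$-action together with the explicit fundamental domain from (B)" presupposes precisely the group-theoretic identification of $\Aut^0\D(X)$ that the conjecture is meant to deliver. For (B), the claim that the translates of the geometric chamber under $\langle[2],\ST_S^2\rangle$ tile $\Stab^\dagger(X)$ with a tree-shaped dual graph is not something you can take as a starting point: it is not known a priori that every stability condition in the component can be moved into the closure of the geometric chamber by such an autoequivalence, and establishing this is essentially equivalent to the conjecture. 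The wall-and-chamber decomposition for \emph{all} spherical objects is also not the locally finite, fundamental-domain structure your tiling argument needs.

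The missing idea in the paper is to sidestep both difficulties at once by enlarging the object of study: one works with $\Stab^*(X)$, the union of \emph{all} images of $\Stab^\dagger(X)$ under autoequivalences, and proves directly that $\Stab^*(X)$ is contractible (Theorem~\ref{main}). Contractibility gives connectedness, which forces $\Stab^*(X)=\Stab^\dagger(X)$ and hence (A), and gives simple-connectedness, hence (B), with no circularity. The proof of contractibility is analytic rather than combinatorial: one fixes the skyscraper sheaf $\O_x$, studies its width $w_{\O_x}(\sigma)=\phi^+(\O_x)-\phi^-(\O_x)$, shows (using the rank-one signature $(2,1)$ geometry of Mukai vectors of (semi)rigid objects) that $w_{\O_x}$ has no positive local minima, and constructs an explicit flow on the reduced stability conditions that strictly decreases the width and cannot escape through the holes $\delta^\perp$ (Sections~\ref{sec:flow} and~\ref{sec:holes}). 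This retracts $\Stab^*(X)$ onto $w_{\O_x}^{-1}(0)$, which is identified with the closure of the contractible geometric chamber $\C\times\uu$ (Section~\ref{sec:conclusion}). Your Farey/tree intuition is a reasonable heuristic for why the answer is a free group, but as written the proposal has no mechanism to prove either that the chambers exhaust the component or that the exchange graph has no cycles, so the core of the argument is missing.
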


Let us write $\Stab^*(X) \subset \Stab(X)$ to denote the union of those connected components which are
images of $\Stab^\dagger(X)$ under an autoequivalence of $\D(X)$. The content of Conjecture
\ref{kyoto} is then that the space $\Stab^*(X)$ should be connected and simply-connected.

\subsection*{Main result}
The main result of this paper is the following:

\begin{thm}
\label{main}
Assume that $X$ has Picard rank $\rho(X)=1$.  Then $\Stab^*(X)$ is contractible. In particular, Conjecture \ref{kyoto} holds in this case. 
\end{thm}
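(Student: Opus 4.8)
The plan is to collapse the contractibility of the six-real-dimensional space $\Stab^*(X)$ to a two-dimensional statement about coverings of the hyperbolic plane. The group $\widetilde{\GL}^+(2,\R)$ acts freely and properly on $\Stab(X)$ by reparametrising the central charge, and this action preserves $\Stab^*(X)$; since $\widetilde{\GL}^+(2,\R)$ is contractible, the quotient $\Stab^*(X)\to Q$, where $Q=\Stab^*(X)/\widetilde{\GL}^+(2,\R)$, is a fibre bundle with contractible fibre and hence induces isomorphisms on all homotopy groups. Thus $\Stab^*(X)$ is contractible if and only if $Q$ is. Since $Q$ is a two-real-dimensional manifold, the classification of (non-compact, boundaryless) surfaces shows that $Q$ is contractible precisely when it is connected and simply connected, so the whole theorem reduces to these two properties of $Q$.

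Next I would identify the base of this reduction. As $\rho(X)=1$, the lattice $\N(X)$ has signature $(2,1)$, and sending a positive-definite oriented $2$-plane to its negative-definite orthogonal line identifies the space of such planes with the hyperbolic plane $\h$. The forgetful map realises $\P^+(X)$ as a $\GL^+(2,\R)$-bundle over $\h$, compatibly with the $\widetilde{\GL}^+(2,\R)$-action upstairs via the covering homomorphism $\widetilde{\GL}^+(2,\R)\to\GL^+(2,\R)$. A class $\delta\in\Delta(X)$ spans a negative line $[\delta]\in\h$, and $\Omega\in\delta^\perp$ exactly when the plane spanned by $\Re\Omega,\Im\Omega$ equals $\delta^\perp$, that is, when it lies in the fibre over $[\delta]$. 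Because only finitely many vectors of norm $-2$ meet any compact region of $\h$, the set $S=\{[\delta]:\delta\in\Delta(X)\}$ is discrete in $\h$, accumulating only on the boundary. Hence $\P^+_0(X)$ is a $\GL^+(2,\R)$-bundle over the open surface $\h\setminus S$, and dividing the covering $\pi$ of Theorem~\ref{bridge} by the compatible free, proper group actions yields a covering map $q\colon Q\to\h\setminus S$.

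It now suffices to prove that $q$ is the universal cover. For connectedness of $Q$ — equivalently, that $\Aut\D(X)$ preserves $\Stab^\dagger(X)$, so that $\Stab^*(X)=\Stab^\dagger(X)$ — I would use that the image of $\varpi$ contains the reflections $s_\delta$ and the isometries induced by the standard autoequivalences, which together act on $\h$ with a fundamental domain meeting every $\Aut^+H^*(X)$-orbit; translating the image of the geometric chamber by the corresponding autoequivalences then covers all of $\h\setminus S$, and one checks that these translates stay within a single connected component. For simple connectedness I would compute the monodromy of $q$: the reflection $s_\delta$ is the order-two rotation of $\h$ about $[\delta]$ and is realised by a spherical twist $\ST_S$ with $v(S)=\delta$, so a small loop encircling the puncture $[\delta]$ lifts to $\ST_S^2\in\Aut^0\D(X)$. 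As $\pi_1(\h\setminus S)$ is free on the loops around the punctures, universality of $q$ is equivalent to injectivity of the resulting monodromy representation sending each generator to a squared spherical twist.

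The main obstacle is precisely this injectivity: one must rule out all relations among the squared twists $\ST_S^2$ beyond those forced by the topology. I expect to establish it not by a bare group-theoretic argument but geometrically inside $\Stab^\dagger(X)$, by analysing the wall-and-chamber decomposition of $\P^+_0(X)$ cut out by the spherical classes, showing that crossing a wall is effected by a single spherical twist and that the chambers are glued in the pattern of a tiling of $\h$ whose nerve is a tree; a tree has no nontrivial loops, so no class in $\pi_1(\h\setminus S)$ survives in $Q$. Proving that the tiling is genuinely tree-like is the crux, and it is here that the Picard rank one hypothesis is essential, since it renders the configuration $S\subset\h$ and the $\Aut^+H^*(X)$-action explicit enough to control every relation among the generating twists.
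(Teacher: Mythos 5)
Your reduction to the two-dimensional quotient $Q=\Stab^*(X)/\widetilde{\GL^+_2}(\R)$ and the identification of $q\colon Q\to\h^o$ as a covering are fine as far as they go, but the proposal has a genuine gap exactly where you locate the crux. You propose to prove that $q$ is the universal cover by showing that the monodromy representation $\pi_1(\h^o)\to\Aut^0\D(X)$, sending a loop around $[\delta]$ to $\ST_{S_\delta}^2$, is injective, and you plan to establish this by showing that the chambers of $\Stab^\dagger(X)$ are glued along a tree. But no argument is given for the tree structure, and this claim is not a technical verification: it is equivalent to the simple-connectedness you are trying to prove. In particular, ruling out \emph{all} relations among the squared twists $\ST_{S_\delta}^2$ (for infinitely many spherical classes $\delta$, with chambers accumulating near the boundary of $\h$) is precisely the content of Conjecture~\ref{kyoto}, and a wall-crossing analysis that only records which single twist effects each wall-crossing does not control compositions along long words. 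The same issue affects your connectedness step: asserting that the $\Aut\D(X)$-translates of the geometric chamber ``stay within a single connected component'' is again the statement that $\Aut\D(X)$ preserves $\Stab^\dagger(X)$, i.e.\ the first half of the conjecture, asserted rather than proved. The paper explicitly notes (Remarks, item (c) of the introduction) that its logic runs in the opposite direction to yours: it does \emph{not} deduce simple-connectedness from faithfulness of a group action.

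What the paper actually does is construct, for the skyscraper sheaf $E=\O_x$, the width function $w_E(\sigma)=\phi^+_\sigma(E)-\phi^-_\sigma(E)$ on $\Stab^*(X)$ and a flow (Sections 4--6) along an explicit vector field on the reduced locus that strictly decreases $w_E$ off the integral walls, proves that this flow cannot escape through the holes $\delta^\perp$ (Section 5), and concatenates the resulting retractions $W_{\le n+1}\to W_{\le n}$ into a deformation retraction of all of $\Stab^*(X)$ onto $W_{=0}$, which is identified with the closure of the geometric chamber and shown directly to be contractible. Contractibility (hence both connectedness and simple-connectedness) comes out of this geometric argument, and the freeness of the group generated by the $\ST_{S_\delta}^2$ is then a \emph{corollary} (Theorem~\ref{corcor}), not an input. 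To repair your proposal you would need an independent proof of the tree/faithfulness claim, and no such proof is currently known except via an argument of the paper's kind.
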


As has been observed previously by Kawatani \cite[Theorem 1.3]{Kawatani:hyperbolic},
when combined with a description of the fundamental group of $\P_0^+(X)$, Theorem \ref{main} implies:
 
\begin{thm}\label{corcor}Assume that $X$ has Picard rank $\rho(X)=1$. 
Then the group $\Aut^0 \D(X)$ is the product of $\Z$ (acting by even shifts) with the free group
generated by the autoequivalences $\ST^2_S$ for all  spherical vector bundles $S$. \end{thm}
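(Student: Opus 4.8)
The plan is to deduce Theorem~\ref{corcor} from Theorem~\ref{main} together with an explicit computation of $\pi_1\!\left(\P^+_0(X)\right)$ in the case $\rho(X)=1$. Since Theorem~\ref{main} establishes Conjecture~\ref{kyoto}, we have the short exact sequence $1\to\pi_1(\P^+_0(X))\to\Aut\D(X)\lRa{\varpi}\Aut^+H^*(X)\to1$; in particular $\Aut^0\D(X)=\ker\varpi$ is the group of deck transformations of the covering $\pi\colon\Stab^\dagger(X)\to\P^+_0(X)$, which is now the universal cover because $\Stab^\dagger(X)$ is simply connected. Hence $\Aut^0\D(X)\cong\pi_1(\P^+_0(X))$, and the whole problem is reduced to computing this fundamental group and matching its generators with the stated autoequivalences.

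For $\rho(X)=1$ the lattice $\N(X)$ has signature $(2,1)$. The complex scaling action of $\C^*$ on $\P^+(X)$ is free and has contractible quotient, so $\P^+(X)\cong\C^*\times\big(\P^+(X)/\C^*\big)$ is homotopy equivalent to a circle. The decisive point is the location of the removed hyperplanes. For $\delta\in\Delta(X)$ we have $\delta^2=-2$, so the real hyperplane $\delta^\perp\subset\N(X)\otimes\R$ is \emph{positive definite} of rank $2$; consequently $(\Omega,\delta)=0$ forces the positive $2$-plane spanned by $\Re\Omega,\Im\Omega$ to coincide with $\delta^\perp$. Parametrising oriented positive-definite $2$-planes by the hyperbolic plane $\Hh$ (equivalently, by their orthogonal negative lines), each $\delta^\perp$ thus lies entirely over a single point $\tau_\delta\in\Hh$. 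Since the fibre direction of $\P^+(X)/\C^*\to\Hh$ is contractible, writing $\Lambda=\{\tau_\delta:\delta\in\Delta(X)\}$ one obtains a homotopy equivalence $\P^+_0(X)\simeq S^1\times(\Hh\setminus\Lambda)$. Because the $(-2)$-classes form a locally finite family in the interior of the positive cone, $\Lambda$ is a discrete, countably infinite subset of $\Hh$, so $\Hh\setminus\Lambda$ is homotopy equivalent to a countable wedge of circles. This yields $\pi_1(\P^+_0(X))\cong\Z\times F$, with $\Z$ central and $F$ free of countable rank.

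It remains to identify the generators with the autoequivalences in the statement. The central $\Z$ is generated by the homotopy class of a $\C^*$-orbit; tracking the $\C$-action on stability conditions shows that traversing such an orbit once lifts to the double shift $[2]$, which lies in $\Aut^0\D(X)$ and is central in $\Aut\D(X)$. A free generator is a small loop around a puncture $\tau_\delta$. For a spherical object $S$ with $v(S)=\delta$, the twist $\ST_S$ covers the reflection $s_\delta=\varpi(\ST_S)$, which as an isometry of $\Hh$ is the rotation by $\pi$ about its unique interior fixed point $\tau_\delta$; hence $\ST_S$ realises a half-loop around $\tau_\delta$, and its square $\ST^2_S$ --- which covers $s_\delta^2=\id$ and so is a genuine deck transformation --- realises the full loop, i.e.\ the corresponding free generator. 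Since every class in $\Delta(X)$ has nonzero rank and is realised, up to sign, by a spherical vector bundle, the generators may be indexed by spherical vector bundles $S$, giving $\Aut^0\D(X)\cong\Z\times F$ as claimed. The two steps requiring genuine care --- and the real content beyond Theorem~\ref{main} --- are the local finiteness of $\{\tau_\delta\}$ in the interior of $\Hh$, which is what forces $F$ to be \emph{free} rather than a more complicated group, and the precise generator correspondence, in particular the appearance of the square $\ST^2_S$ dictated by the rotation-by-$\pi$ description of $s_\delta$.
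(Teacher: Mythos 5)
Your reduction to $\pi_1(\P^+_0(X))$ via Theorem \ref{main}, and your computation of that group as $\Z\times F$ with $F$ free, follow the same route as the paper (which phrases the computation as the triviality of the $\GL^+_2(\R)$-bundle $\P^+_0(X)\to\h^o$ via the section $\exp$, rather than via the $\C^*$-action, but this is the same content as equation \eqref{eq:pi1P0}). The genuine gap is in the generator identification. Knowing that $\varpi(\ST_{S_\delta})=s_\delta$ acts on $\Hh$ as the rotation by $\pi$ about $\tau_\delta$ tells you only that $\ST^2_{S_\delta}$ covers the identity, i.e.\ is \emph{some} deck transformation; it does not determine \emph{which} element of $\pi_1(\P^+_0(X))$ it is. The deck transformation attached to $\ST^2_{S_\delta}$ is computed by lifting an actual path in $\Stab^\dagger(X)$ from $\sigma$ to $\ST_{S_\delta}(\sigma)$ and projecting, and the homotopy class of that projection is not dictated by the action downstairs: a priori $\ST^2_{S_\delta}$ could differ from the simple loop around $\tau_\delta$ by any element of $\pi_1(\P^+_0(X))$ compatible with covering $s_\delta^2=\id$ --- for instance by a power of $[2]$ (note $[2]$ also covers the identity), or by loops around other punctures, or by a conjugate of the expected generator. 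Your phrase ``$\ST_S$ realises a half-loop around $\tau_\delta$'' is exactly the assertion that needs proof, and the rotation-by-$\pi$ picture does not supply it.

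The paper closes this gap with Proposition \ref{prop:loopandtwist} (due to Kawatani), whose proof uses genuinely categorical input: the explicit description of the two walls $\W^\pm_\delta$ of the geometric chamber associated to $\delta$ from \cite[Theorem 12.1]{Bridgeland:K3}, and the fact from \cite[Proposition 13.2]{Bridgeland:K3} that crossing them lands in $\ST_{S_\delta}^{\pm2}\,U(X)$. This tracks a concrete lifted path starting in the geometric chamber and shows its endpoint lies in $\ST_{S_\delta}^{\pm2}\,U(X)$, which is what actually identifies the deck transformation of the anticlockwise loop around $\tau_\delta$ with $\ST^2_{S_\delta}$. You would need to invoke this proposition (or reprove the wall-crossing analysis) to complete your argument; without it the claimed product decomposition $\Z\times F$ with the stated generators does not follow. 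A minor additional slip: the points $\tau_\delta$ are discrete in the projectivised \emph{negative} cone (the $\delta$ are $(-2)$-classes), not the positive cone, though the discreteness claim itself is correct and is also what the paper uses.
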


As we will explain in Section \ref{sec:prelim},  the assumption $\rho(X)=1$ implies that any
spherical coherent sheaf $S$ on $X$ is necessarily a $\mu$-stable vector bundle.\smallskip

To prove Theorem \ref{main}, we start with the observation that the set of geometric stability
conditions is contractible (this easily follows from the results in \cite[Section
10--11]{Bridgeland:K3}). Now pick a point $x \in X$, and 
consider the width 
\[w_{\O_x}(\sigma) =\phi^+(\O_x)-\phi^-(\O_x),\] where $\phi^{\pm}(\O_x)$ is the
maximal and minimal phase appearing in the Harder-Narasimhan filtration of $\O_x$; this defines
a continuous function
\[w_{\O_x}\colon \Stab^*(X)\to \R_{\geq 0}.\]
We then construct a flow on $\Stab^*(X)$ which decreases $w_{\O_x}$, and use it to
contract $\Stab^*(X)$ onto the subset $w_{\O_x}^{-1}(0)$ of geometric stability conditions.

\begin{remarks}
\begin{itemize}
\item[(a)]
We do not currently know how to generalize our methods to higher Picard rank. Our entire argument  -- even the definition  of the flow -- is based on the fact that
the cone of classes in $\N(X) \otimes \R$ with negative square has two connected components.
Note that in the general case it is not known whether the universal cover of $\P^+_0(X)$ is
contractible, although this statement is implied by a special
case of a conjecture of Allcock, see \cite[Conjecture 7.1]{Allcock:completions}.

\item[(b)]
Many of the questions relevant to this article were first raised in \cite{Balasz:diffeomorphisms}.

\item[(c)]
There are various examples of derived categories of non-projective manifolds $Y$ for which 
it has been shown that a distinguished connected component of the space $\Stab(Y)$  is
simply-connected, see \cite{Ishii-Ueda-Uehara, Brav-HThomas:ADE, localP2, Qiu:Dynkin, Sutherland:CY3-algebra}.
In each of these cases, the authors used
the faithfulness of a specific group action on $\D(Y)$ to deduce simply-connectedness of (a
component of) $\Stab(Y)$, whereas our
logic runs in the opposite direction: a geometric proof of simply-connectedness implies the faithfulness of a group action.
\end{itemize}
\end{remarks}

\subsection*{Relation to mirror symmetry}
We will briefly explain the relation of Conjecture \ref{kyoto} to mirror symmetry; the details 
can be found in Section \ref{sect:MS}. The reader is also referred to  \cite{Bridgeland:spaces} for more details on this. The basic point is that the group of autoequivalences of $\D(X)$ as a Calabi-Yau category coincides with the fundamental group of a mirror family of K3 surfaces.



A stability condition $\sigma\in \Stab^*(X)$ will be called \emph{reduced} if the corresponding vector $\Omega\in \N(X)\tensor \C$ satisfies $(\Omega,\Omega)=0$. This condition defines a complex submanifold
\[\Stab^*_{\operatorname{red}}(X)\subset \Stab^*(X).\]
As explained in  \cite{Bridgeland:spaces}, this is the first example of Hodge-theoretic conditions on stability conditions: it is not known how such a submanifold should be defined for higher-dimensional Calabi-Yau categories.

Define  a subgroup \[\Aut^+_\CY H^*(X)\subset \Aut^+ H^*(X)\]
consisting of Hodge isometries whose complexification acts trivially on the complex  line $H^{2,0}(X,\C)$, and let 
\[\Aut_{\rm CY} \D(X) \subset \Aut \D(X)\]
denote the subgroup of autoequivalences $\Phi$  for which $\varpi(\Phi)$ lies in $\Aut^+_\CY H^*(X)$. Such autoequivalences are usually called \emph{symplectic}, but we prefer the term \emph{Calabi-Yau} since, as we explain in the Appendix, this condition is equivalent to the statement that $\Phi$ preserves all 
 Serre duality pairings
\[\Hom^i_{X}(E,F)\times \Hom^{2-i}_X(F,E)\lra \C.\]

Let us now consider  the orbifold quotient
\[\L_{\rm Kah}(X)=\Stab_{\rm red}^*(X)/\Aut_{\rm CY} D(X).\]
There is a free action of the group $\C$ on $\Stab^*(X)$, given by rotating the central charge $Z$ and adjusting the phases of stable objects in the obvious way.  The action of $2n\in \Z\subset \C$ coincides with the action of the shift functor $[2n]\in \Aut_{\rm CY} \D(X)$. In this way  we obtain 
  an action of $\C^*=\C/2\Z$ on the space $\L_{\rm Kah}(X)$, and we can also consider the quotient
\[ \M_{\rm Kah}(X)=\L_{\rm Kah}(X)/\C^*.\]
 We view this complex orbifold as  a mathematical version of the stringy K{\"a}hler moduli space of the K3 surface $X$.


 Using Theorem \ref{bridge} one easily deduces the following more concrete description for this orbifold.
Define  period domains
\[\Omega^\pm(X) = \left\{\Omega \in \PP( \N(X) \otimes \C) \colon (\Omega, \Omega) = 0, (\Omega, \overline{\Omega}) > 0\right\},\]
\[\Omega^\pm_0(X)=\Omega^\pm(X)\setminus \bigcup_{\delta\in \Delta(X)} \delta^\perp,\]
and let 
 $\Omega^+_0(X)\subset \Omega_0(X)$ be the connected component containing classes  $(1,i\omega,\half \omega^2)$ for $\omega\in \NS(X)$ ample. Then there is an identification
\[\M_{\rm{Kah}}(X) =\Omega_0^+(X)/\Aut_{\rm CY} H^*(X).\]

The mirror phenomenon in this context is the fact that this orbifold also arises as the base of a family  of lattice-polarized K3 surfaces. More precisely, under mild assumptions (which always hold when $X$ has Picard number $\rho(X)=1$), the stack $\M_{\rm Kah}(X)$ can be identified with
 the base of Dolgachev's family of lattice-polarized K3 surfaces mirror to
$X$ \cite{Dolgachev:MSK3}.

Conjecture \ref{kyoto} is equivalent to the statement that the natural map
\[\pi_1^{\rm orb} \big(\M_{\rm Kah}(X))\lra \Aut_{\rm CY} \D(X)/[2]\]
is an isomorphism.
Our verification of this Conjecture in the case $\rho(X)=1$   thus gives a precise
incarnation of Kontsevich's general principle that the group of derived autoequivalences of a Calabi-Yau variety should be related to the fundamental group of the base of the mirror family.

\subsection*{Acknowledgements.} Thanks to Chris Brav, Heinrich Hartmann, Daniel Huybrechts, Ludmil
Katzarkov, Kotaro Kawatani and Emanuele Macr{\`{i}} for useful discussions. Special thanks are due to Daniel
Huybrechts for persistently encouraging us to work on this problem. We are also grateful to 
the referees for a very careful reading of the article, and to Ivan Smith for pointing out an inaccuracy in an earlier version. 
The first author was supported by ERC grant WallXBirGeom 337039 while completing work on this article.

It is a pleasure to dedicate this article to Professor Mukai on the occasion of his 60th birthday.
The intellectual debt this article owes to his work, starting with \cite{Mukai:BundlesK3}, can't be
overstated.

\section{Preliminaries}
\label{sec:prelim}

Let $X$ be a complex algebraic K3 surface.

\subsection*{Stability conditions} \label{subsec:stab}
Recall that a \emph{numerical stability condition} $\sigma$ on $X$ is a pair $(Z, \P)$ where
$Z \colon \N(X) \to \C$ is a group homomorphism, and \[\P=\bigcup _{\phi\in \R} \P(\phi) \subset \D(X)\]
is a full subcategory. The homomorphism $Z$ is called the \emph{central charge}, and the objects of the subcategory $\P(\phi)$  are said to be \emph{semistable of phase $\phi$}. We refer to \cite{Bridgeland:Stab} and \cite[Section 2]{Bridgeland:K3} for a
complete definition. Any object $E \in \D(X)$ admits a unique \emph{Harder-Narasimhan (HN) filtration}
\[
\xymatrix@C=.5em{
0_{\ } \ar@{=}[r] & E_0 \ar[rrrr] &&&& E_1 \ar[rrrr] \ar[dll] &&&& E_2
\ar[rr] \ar[dll] && \ldots \ar[rr] && E_{n-1}
\ar[rrrr] &&&& E_n \ar[dll] \ar@{=}[r] &  E_{\ } \\
&&& A_1 \ar@{-->}[ull] &&&& A_2 \ar@{-->}[ull] &&&&&&&& A_n \ar@{-->}[ull] 
}
\]
with $A_i\in \P(\phi_i)$ semistable, and $\phi_1 > \phi_2 > \dots > \phi_n$. We  refer to the objects $A_i$ as the \emph{semistable factors} of $E$. We
write $\phi_+(E) = \phi_1$ and $\phi_-(E) = \phi_n$ for the maximal and minimal phase appearing in the
HN filtration  respectively.

Using the non-degenerate Mukai pairing on $\N(X)$ we can write the central charge of  any numerical stability condition in the form
\[Z(\blank)=(\Omega,v(\blank))\colon \N(X)\to \C \]
for some uniquely defined $\Omega \in \N(X) \otimes \C$. 
Fix a norm $\|\cdot\|$ on the finite-dimensional vector space $\N(X)\tensor \R$. A numerical
stability condition $\sigma=(Z,\P)$ is said to satisfy the \emph{support condition} \cite{Kontsevich-Soibelman:stability}
if there is a constant $K>0$ such that for any semistable object $E\in\P(\phi)$ there is an inequality
\[|Z(E)|\geq K\cdot \|E\|.\]
 As shown in \cite[Proposition B.4]{localP2}, this is equivalent to the condition that $\sigma$ be  \emph{locally-finite} \cite[Defn. 5.7]{Bridgeland:Stab} and \emph{full} \cite[Defn. 4.2]{Bridgeland:K3}.

If the stability condition $\sigma=(Z,\P)$  is locally-finite, each subcategory $\P(\phi)$ is a finite length abelian category; the simple objects of $\P(\phi)$ are said to be \emph{stable of phase $\phi$}. Each semistable factor $A_i$ of a given object $E\in \D(X)$ has a Jordan-H{\"o}lder filtration in $\P(\phi_i)$. Putting these together gives a (non-unique) filtration of $E$ whose factors $S_j$ are stable, with phases taken from the set $\{\phi_1,\cdots,\phi_n\}$. These  objects  $S_j$ are uniquely determined by $E$ (up to reordering and isomorphism); we refer to them as the \emph{stable factors} of $E$.

We let $\Stab(X)$ denote the set of all numerical stability conditions on $\D(X)$ satisfying the support condition. This set has a natural topology induced by a (generalized) metric $d(\blank,
\blank)$. We refer to \cite[Proposition 8.1]{Bridgeland:Stab} for the full definition, and simply list  the following properties:
\begin{itemize}
\item[(a)] For any object $E\in \D(X)$, the functions $\phi^\pm(E)\colon \Stab(X)\to \R$ are continuous.\smallskip
\item[(b)]Take $0<\epsilon<1$ and consider  two stability conditions  $\sigma_i = (\P_i, Z_i)$  such that
$d(\sigma_1, \sigma_2) < \epsilon$. Then if an object $E\in \D(X)$ is semistable in one of the stability conditions $\sigma_i$, the arguments of the
complex numbers $Z_i(E)$ differ by at most $\pi \epsilon$.\smallskip

\item[(c)]  The forgetful map
$\Stab(X)\to \N(X)\tensor \C$ sending a stability condition to the vector $\Omega$
is a local homeomorphism \cite{Bridgeland:Stab}.
\end{itemize}

Let $\GL^+_2(\R)$ be the group of orientation-preserving automorphisms of $\R^2$. The universal cover $\operatorname{\widetilde{GL^+_2}}(\R)$ of this group acts on $\Stab(X)$ by post-composition on the central charge $Z\colon  \N(X)\to \C\isom \R^2$ and a suitable relabelling of the phases (see  \cite{Bridgeland:Stab}). There is a subgroup $\C \subset \widetilde{\GL^+_2}(\R)$ which acts freely; explicitly this action is given by $\lambda\cdot(Z, \P) = (Z', \P')$ with
$Z' = e^{\pi i \lambda} \cdot Z$ and $\P'(\phi) = \P(\phi - \Re \lambda)$.
 There is also  an action of the group $\Aut \D(X)$ on $\Stab(X)$ by $\Phi\cdot (Z,\P)=(Z',\P')$ with $Z'=Z\circ \varpi(\Phi)^{-1}$ and $\P'(\phi)=\Phi(\P(\phi))$.



\subsection*{Period domains}
Recall the definitions of the open subsets
\[\P^\pm_0(X)\subset \P^\pm(X)\subset \N(X)\tensor \C\]
from the introduction. Now consider the corresponding subsets
\[\Q^\pm(X) = \left\{\Omega \in \N(X) \otimes \C\colon (\Omega, \Omega) = 0, (\Omega, \overline{\Omega}) > 0\right\},\]
\[\Q^\pm_0(X)=\Q^\pm(X)\setminus \bigcup_{\delta\in \Delta(X)} \delta^\perp.\]
These are invariant under the rescaling action of $\C^*$ on $\N(X)\tensor \C$. As with $\P^\pm(X)$, the subset  $\Q^\pm(X)$ consists of two connected components, and we let $\Q^+(X)=\Q^\pm(X)\cap \P^+(X)$ be the one containing  classes  $(1,i\omega,\half \omega^2)$ for $\omega\in \NS(X)$ ample.
 
The normalization condition
$(\Omega, \Omega) = 0$ is equivalent to the statement that
$\Re \Omega, \Im \Omega$ are a conformal basis of the 2-plane in $\N(X)\tensor \R$ which they span:
\[ (\Re \Omega, \Im \Omega) = 0, \quad 
(\Re \Omega)^2 = (\Im \Omega)^2 > 0. \]
From this, one easily sees that each $\GL_2(\R)$-orbit in $\P^\pm(X)$ intersects
$\Q^\pm(X)$  in a unique $\C^*$-orbit.  It follows that
\begin{equation} \label{eq:domainquotients}
\P^+_0(X)/\GL^+_2(\R) = \Q^+_0(X)/\C^*,
\end{equation}
and further that $\Q^+_0(X) \subset \P^+_0(X)$ is a deformation retract.

 Let $\T^\pm(X) \subset \N(X) \otimes \C$ be the two components of the tube domain
\[
\T^\pm(X) = \left\{\beta + i \omega \colon \beta, \omega \in \NS(X)\tensor \R, (\omega, \omega) > 0 \right\},
\]
where $\T^+(X)$ denotes the component containing $i \omega$ for ample classes $\omega$.  
The map $\Omega = \exp(\beta + i \omega)$ defines an embedding
\begin{equation}
\label{sect}
\T^+(X) \hookrightarrow \Q^+(X)\end{equation}
which gives a section of the $\C^*$-action on $\Q^+(X)$; this identifies $\T^+(X)$ with the
quotients in \eqref{eq:domainquotients}. We set
\[\T^+_0(X)=\T^+(X)\cap \Q^+_0(X)\]
for the corresponding hyperplane complement.

Consider the case when $X$ has Picard number $\rho(X) = 1$. The ample generator of $\NS(X)$ allows us to identify
$\T^+(X)$ canonically with the upper half plane $\h$.  
The hyperplane complement $\T_0^+(X) \subset \T^+(X)$ then corresponds to the open subset
\[\h^o=\h\setminus \{\beta+i\omega\in \h\colon \langle \exp({\beta+i\omega}),\delta\rangle=0 \text{ when }
\delta\in \Delta(X)\}.\]
We now recall briefly the description of the fundamental group $\pi_1(\P_0^+(X))$ given in
\cite[Prop. 2.14]{Kawatani:hyperbolic}. 
The system of hyperplanes $\bigcup_{\delta\in \Delta(X)}\delta^\perp\subset \Q^\pm(X)$ is locally-finite, and it follows that the complement  $\h \setminus \h^o$ is a discrete subset of  the upper half plane. From this one deduces
that the fundamental group
$\pi_1(\h^0)$ is the free group with the obvious generators.\footnote{Let us briefly sketch a
proof. For $\epsilon > 0$, let $\h^o_\epsilon = \h^0 \cap \{\Im z > \epsilon, |\Re z| <1/\epsilon\}$.
Then $h^o_\epsilon$ is homeomorphic to a disc with finitely many holes; by Seifert-van Kampen, its
fundamental group is the free group with finitely many generators. On the other hand, 
using compactness of loops and homotopies one can show that $\pi_1(\h^o)$ is the union of the
fundamental groups $\pi_1(\h^o_\epsilon)$ as $\epsilon \to 0$.}
 Now $\P_0^+(X)$ is a $\GL^+_2(\R)$-bundle over $\h^0$, and this bundle is trivial since the map
\eqref{sect} defines a section. This yields
\begin{equation} \label{eq:pi1P0}
\pi_1(\P_0^+(X)) = \Z \times \pi_1(\h^o).
\end{equation}

\subsection*{Geometric stability conditions}

A stability condition in $\Stab(X)$ is  said to be \emph{reduced} if the corresponding vector  $\Omega\in \N(X)\tensor \C$ satisfies $(\Omega,\Omega)=0$. The set of reduced stability conditions forms a complex submanifold
\[\Stab_{\rm red}(X)\subset \Stab(X).\]
This submanifold preserves all topological information:

\begin{lemma} \label{reducedretract}
The inclusion 
$\Stab_{\rm red}(X)\subset \Stab(X)$ is a deformation retract.
\end{lemma}
\begin{proof}
The action of $\widetilde{\GL_2^+(\R)}$ on $\Stab(X)$ is free, and  $\Stab_{\rm red}(X)$ is invariant under the subgroup $\C \subset
\widetilde{\GL_2^+(\R)}$, and contains exactly one $\C$-orbit for every
$\widetilde{\GL_2^+(\R)}$-orbit of $\Stab^*(X)$. The result then follows from the contractibility of the quotient space $\widetilde{\GL_2^+(\R)}/\C\isom\GL_2^+(\R)/\C^*\isom  \h$.
\end{proof}

Recall from the introduction that we denote by $\Stab^\dag(X)\subset \Stab(X)$ the connected component constructed in \cite{Bridgeland:K3}, and by $\Stab^*(X)\subset \Stab(X)$ the union of those connected components which are images of $\Stab^\dag(X)$ under elements of $\Aut \D(X)$.
Restricted to the components $\Stab^*(X)\subset \Stab(X)$, the reduced condition is precisely that the vector $\Omega$ lies in $\Q^+(X)\subset \P^+(X)$.

The starting point in the
description of $\Stab^\dag(X)$ given in \cite{Bridgeland:K3} is a characterization of the set of
stability conditions $U(X) \subset \Stab_{\rm red}(X)$, for which all skyscraper sheaves $\O_x$ of points $x \in X$ are stable of the
same phase. Such stability conditions are called \emph{geometric}.
Note that the subset $U(X)$ is invariant under the $\C$-action on $\Stab(X)$, and each orbit contains a unique stability condition for which the  objects $\O_x$ are stable of phase 1.

To describe the set $U(X)$ we first define
\[\Delta^+(X)=\{(r,D,s)\in \Delta: r>0\}\subset \Delta(X)\]
and consider the open subset
\[\uu=\left\{\beta+i\omega\in T^+(X): \delta \in \Delta^+(X) \implies \langle \exp({\beta+i\omega}), \delta\rangle \notin
\R_{\le 0} \right\}.\]
The following result is proved in \cite[Sections 10-12]{Bridgeland:K3}.

\begin{thm} \label{geom}
The forgetful map $\Stab(X) \to \N(X)\tensor \C$ induces a bijection between the set of reduced, geometric stability conditions in which the objects $\O_x$ have phase 1, and the set of vectors of the form
$\Omega = \exp(\beta + i\omega)$ with $\beta + i \omega \in \uu$. Thus there is an isomorphism
\[
U(X) \cong \C \times \uu.
\]
\end{thm}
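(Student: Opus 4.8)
The plan is to build the geometric stability conditions by hand, as tilts of $\Coh(X)$, and then to read off from the positivity of the central charge exactly when the skyscrapers are stable. \emph{Construction.} Fix $\beta+i\omega\in\T^+(X)$, so $\omega^2>0$; after tensoring by a line bundle and shifting I may assume $\omega$ is ample, so that Mumford slope stability $\mu_\omega$ is available on $\Coh(X)$. Let $(\T_{\beta,\omega},\F_{\beta,\omega})$ be the torsion pair in which $\T_{\beta,\omega}$ is generated by the torsion sheaves together with the $\mu_\omega$-semistable sheaves of twisted slope $\omega\cdot(c_1-r\beta)>0$, while $\F_{\beta,\omega}$ consists of the torsion-free sheaves all of whose $\mu_\omega$-HN slopes satisfy the reverse inequality. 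Tilting yields a heart $\A=\A(\beta,\omega)=\langle\F_{\beta,\omega}[1],\T_{\beta,\omega}\rangle\subset\D(X)$, and I set $Z(E)=(\exp(\beta+i\omega),v(E))$. A direct computation gives $\Im Z(E)=\omega\cdot\big(c_1(E)-r(E)\beta\big)$, which is $\ge 0$ on all of $\A$ by the definition of the tilt, and $Z(\O_x)=(\exp(\beta+i\omega),(0,0,1))=-1$, so $\O_x$ will have phase $1$.

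\emph{Positivity and the stability axioms.} The crucial step is to show $(Z,\A)$ is a stability function, i.e. that $\Im Z(E)=0$ for $0\ne E\in\A$ forces $\Re Z(E)<0$. Analysing cohomology, such an $E$ is an extension of a zero-dimensional torsion sheaf (in degree $0$, for which $Z<0$ is immediate) by $F[1]$ with $F$ a $\mu_\omega$-semistable torsion-free sheaf of twisted slope $0$; filtering $F$ into stable factors reduces to the case $F$ $\mu_\omega$-stable. Here I would invoke the K3-specific Bogomolov--Gieseker inequality: by Riemann--Roch and simplicity of a stable sheaf, $\dim\Ext^1(F,F)=(v(F),v(F))+2\ge 0$, so $(v(F),v(F))\ge -2$, with equality exactly for spherical $F$. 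Decomposing $v(F)$ against the positive plane spanned by $\Re\Omega,\Im\Omega$ and combining $(v(F),v(F))\ge -2$ with the Hodge index theorem on the negative-definite orthogonal complement yields $\Re Z(F)\ge \tfrac{r}{2}\omega^2-\tfrac1r$, which is strictly positive once $\omega^2>2$; hence $\Re(Z(F[1]))=-\Re Z(F)<0$ in that range. Existence of HN filtrations and the support condition then follow from Noetherianity of $\A$ and discreteness of $v(\A)$, placing $\sigma(\beta,\omega)$ in $\Stab(X)$. To reach all of $\uu$, in particular the region $\omega^2\le 2$, I would propagate these large-volume stability conditions by deformation across the hyperplane arrangement, with wall-crossing controlled exactly by the classes $\delta\in\Delta^+(X)$; this is the technically heaviest part.

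\emph{Geometricity.} Once $\sigma(\beta,\omega)\in\Stab(X)$ is built, $\O_x\in\A$ is semistable of phase $1$ for every $x$. It fails to be stable precisely when it admits a proper sub- or quotient-object in $\A$ of phase $1$, and analysing which objects can have phase $1$ shows that these are governed by spherical sheaves $S$ with $(\exp(\beta+i\omega),v(S))\in\R_{\le 0}$. Thus all $\O_x$ are stable if and only if no $\delta\in\Delta^+(X)$ satisfies $(\exp(\beta+i\omega),\delta)\in\R_{\le 0}$, which is exactly the condition $\beta+i\omega\in\uu$. This identifies the image of the forgetful map, restricted to the geometric phase-$1$ locus, with $\exp(\uu)$.

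\emph{Converse and bijectivity.} Conversely, let $\sigma=(Z,\P)$ be reduced and geometric with all $\O_x$ of phase $1$. Writing $Z=(\Omega,\blank)$, reducedness gives $(\Omega,\Omega)=0$ with $\Omega\in\Q^+(X)$, and $Z(\O_x)=-1$ forces $\Omega=\exp(\beta+i\omega)$ for a unique $\beta+i\omega\in\T^+(X)$. Stability of every $\O_x$ of phase $1$ then pins down which sheaves sit in cohomological degrees $0$ and $-1$ relative to the heart $\P((0,1])$, recovering the torsion pair $(\T_{\beta,\omega},\F_{\beta,\omega})$ and hence the identity $\P((0,1])=\A(\beta,\omega)$; in particular $\omega$ lands in the positive cone. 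Combined with the previous step, this shows the forgetful map is a bijection between reduced geometric phase-$1$ conditions and $\exp(\uu)$; finally the free $\C$-action by phase rotation and rescaling sweeps out all of $U(X)$, giving $U(X)\cong\C\times\uu$. The main obstacle throughout is the positivity of $Z$ at the boundary $\Im Z=0$: it is genuinely K3-specific, failing in general for $\omega^2\le 2$, and it is precisely the arithmetic of when a spherical class lands on $\R_{\le 0}$---encoded in $\uu$---that both obstructs positivity and controls stability of the $\O_x$.
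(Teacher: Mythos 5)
The paper offers no proof of this statement: it is quoted from \cite{Bridgeland:K3}, and the proof is the content of Sections 6 and 10--12 of that reference. Your outline follows the same strategy as that source (tilt $\Coh(X)$ at the twisted slope $\omega\cdot(c_1-r\beta)$, prove positivity of the central charge using $(v,v)\ge -2$ for simple sheaves, then classify the hearts of geometric stability conditions for the converse), so in broad terms you have reconstructed the intended argument.

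Two corrections are needed, though. First, the detour through $\omega^2>2$ followed by ``propagation by deformation across the hyperplane arrangement'' is not how positivity is, or should be, established: writing $c_1(F)=r\beta+\gamma$ with $\omega\cdot\gamma=0$, one has the exact identity $\Re Z(F)=\tfrac{r}{2}\omega^2-\tfrac{\gamma^2}{2r}+\tfrac{(v,v)}{2r}$, and since $\gamma^2\le 0$ by the Hodge index theorem this is automatically positive for every simple $F$ with $(v,v)\ge 0$; positivity can only fail for spherical $F$, in which case $\Re Z(F)\le 0=\Im Z(F)$ is precisely the condition $\langle\exp(\beta+i\omega),v(F)\rangle\in\R_{\le 0}$ excluded by $\uu$. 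So $\uu$ is \emph{exactly} the locus where the tilted pair is a stability function, with no restriction on $\omega^2$; deformation arguments enter in \cite{Bridgeland:K3} only to handle irrational $\beta,\omega$ and the support property, not positivity. Second, and relatedly, your ``geometricity'' step inverts the logic: inside the heart $\A(\beta,\omega)$ the skyscraper $\O_x$ is \emph{always} stable (a phase-one subobject $A\subset\O_x$ has $H^{-1}(A)=0$ and $\Im Z(A)=0$, hence is a zero-dimensional sheaf, and the kernel of the induced map $A\to\O_x$ would be a zero-dimensional subsheaf of an object of the torsion-free category $\F_{\beta,\omega}$, hence zero). The spherical classes do not destabilize $\O_x$ within the tilted heart; they obstruct the existence of the tilted stability condition altogether. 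That no geometric stability condition of any other shape lives over $\T^+(X)\setminus\uu$ is therefore carried entirely by the converse direction --- the statement that every reduced geometric stability condition with the $\O_x$ stable of phase $1$ has heart $\A(\beta,\omega)$. This is the genuinely substantial part of the proof (\cite[Sections 10--12]{Bridgeland:K3}, which traps arbitrary sheaves in $\P((-1,1])$ using maps to and from the stable objects $\O_x$ together with Serre duality), and in your write-up it is asserted in a single sentence; as it stands this is a real gap that would need to be filled.
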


Let us again consider the case when $X$ has Picard number $\rho(X)=1$.
Note that we then have
\[\Delta(X)=\Delta^+(X) \sqcup -\Delta^+(X),\]
since there are no spherical classes of the form $(0,D,s)$ because the intersection form on $\NS(X)$ is positive definite.
 The subset $\uu\subset \h^0$ is obtained by removing the vertical line segment
between the real line and each hole $\h \setminus \h^o$, see Figure \ref{fig:hwithoutsegments}.
For each $\delta \in \Delta^+(X)$  there is a unique spherical sheaf
$S_\delta \in
\Coh X$ with $v(S_\delta) = \delta$, and this sheaf $S_\delta$ is automatically a $\mu$-stable vector
bundle.\footnote{The existence is part of \cite[Theorem 0.1]{Yoshioka:Irreducibility}. Mukai already
proved that a spherical torsion-free sheaf is automatically locally free and $\mu$-stable see
\cite[Prop.~3.3 and Prop.~3.14]{Mukai:BundlesK3}; the torsion-freeness in the case $\rho(X) = 1$
follows with the same argument, see Remark \ref{rem:sphericaltorsionfree}. Finally, the uniqueness
is elementary from stability, see \cite[Corollary 3.5]{Mukai:BundlesK3}.}

\begin{figure}
\includegraphics[scale=0.7]{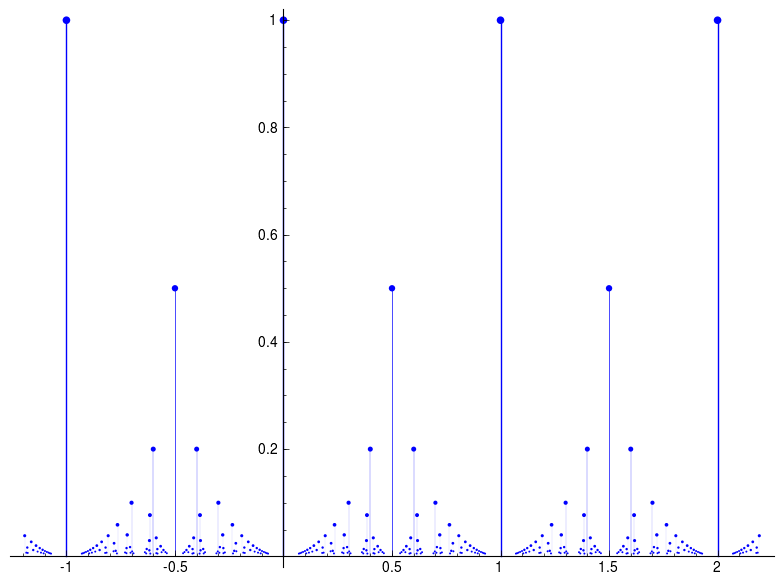}
\caption{\label{fig:hwithoutsegments}The subsets $\uu \subset \h^o \subset \h$ of the upper half plane, for a generic K3 surface
that is a double-cover of $\mathbb P^2$.}
\end{figure}

The following result was proved by Kawatani \cite[Prop. 5.4]{Kawatani:hyperbolic}.  For the reader's convenience we include a sketch proof here.

\begin{prop} \label{prop:loopandtwist}
Let $\delta \in \Delta^+(X)$ be a spherical class with positive rank, and $S_\delta$ the spherical
vector bundle with Mukai vector $\delta$.  The deck transformation of the normal covering $\pi$
associated to the anti-clockwise loop in $\h^o$ around $\delta^\perp$ 
is the square $\ST_{S_\delta}^2$ of the twist functor associated to $S_\delta$.
\end{prop}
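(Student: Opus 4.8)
The plan is to read off the deck transformation straight from its definition: for the normal covering $\pi\colon\Stab^\dagger(X)\to\P^+_0(X)$ of Theorem~\ref{bridge}, the transformation attached to a based loop $\gamma$ is the one carrying the basepoint to the endpoint of the lift of $\gamma$. I will realise the anti-clockwise loop around $\delta^\perp$ as the concatenation of two half-turns, each covered by the \emph{single} functor $\ST_S$ (write $S=S_\delta$). The starting observation is cohomological: $\varpi(\ST_S)$ is the reflection $s_\delta(v)=v+(v,\delta)\delta$, which fixes $\delta^\perp$ and sends $\delta\mapsto-\delta$. Writing $w=(\Omega,\delta)$ for the coordinate transverse to the wall $\delta^\perp=\{w=0\}$, one has $(s_\delta\Omega,\delta)=-w$, so $\ST_S$ lies over the half-turn $w\mapsto -w$, and $\ST_S^2\in\Aut^0\D(X)$ lies over a full turn. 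Restricted to the slice $\h^o$, the function $w$ is holomorphic with a simple zero at the hole $\tau_\delta$, so an anti-clockwise loop around $\tau_\delta$ is carried to an anti-clockwise loop of $w$ about $0$; this fixes the orientation convention of the statement.

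Next I would fix the basepoint and build the first half-turn. Choose $\sigma_0\in U(X)$ close to the wall, so that $|w_0|=|Z(S)|$ is small; there $S$ is stable with $\arg Z(S)=\arg w_0\in(0,\pi)$. Using the orthogonal decomposition $\Omega=\Omega^\perp-\tfrac12 w\,\delta$ with $\Omega^\perp\in\delta^\perp$, define a path downstairs by rotating only the transverse coordinate, $w_t=w_0\,e^{\pi i t}$ for $t\in[0,1]$, keeping $\Omega^\perp$ fixed. For $|w_0|$ small this stays in $\P^+_0(X)$, meets no removed hyperplane $\delta'^\perp$, and half-circles $\delta^\perp$ once, ending at $s_\delta(\Omega_0)$. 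Lifting from $\sigma_0$ gives a path $\sigma_t$; being a continuous path in $\Stab(X)$ issuing from $\sigma_0\in\Stab^\dagger(X)$, it remains in $\Stab^\dagger(X)$. The key step is to identify its endpoint $\sigma_1$ with $\ST_S(\sigma_0)$. Both lie over $s_\delta(\Omega_0)$, so it suffices to match them on stable objects, which I would do by following the unique change of stable objects along the lift, occurring at $w_t\in\R_{<0}$: there $\phi(S)=\phi(\O_x)$, the nonzero maps in $\Hom(S,\O_x)$ force a destabilisation, and Bridgeland's analysis of the boundary of the geometric chamber \cite[Sections 10--12]{Bridgeland:K3} shows that the stability condition on the far side is produced by the spherical twist. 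The normalisation $\ST_S(S)=S[-1]$, together with the induced shift of $\O_x$, then pins down $\sigma_1=\ST_S(\sigma_0)$.

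With the first half-turn in hand, the second is its image $\ST_S(\sigma_t)$: it runs from $\ST_S(\sigma_0)=\sigma_1$ to $\ST_S^2(\sigma_0)$, and since $\pi$ intertwines the $\Aut\D(X)$-action with the $\varpi$-action, its transverse coordinate is $-w_t=w_0\,e^{\pi i(t+1)}$, continuing the same anti-clockwise rotation from $s_\delta(\Omega_0)$ back to $\Omega_0$. Concatenating yields a continuous path in $\Stab^\dagger(X)$ from $\sigma_0$ to $\ST_S^2(\sigma_0)$ whose projection is exactly one anti-clockwise loop of $w$ about $0$, i.e.\ the anti-clockwise loop around $\delta^\perp$. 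Since $\ST_S^2$ is a homeomorphism of $\Stab(X)$ with $\ST_S^2(\sigma_0)\in\Stab^\dagger(X)$, it preserves the component and is therefore a deck transformation; by the definition above it is the one attached to this loop, as claimed.

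The main obstacle is precisely the identification $\sigma_1=\ST_S(\sigma_0)$ together with the orientation bookkeeping separating $\ST_S^2$ from $\ST_S^{-2}$. Concretely, one must verify that as $\phi(S)$ is rotated upward through $\phi(\O_x)$ it is the twist $\ST_S$ (and not $\ST_S^{-1}$) that describes the new stable objects, and that the single change of stable objects along the lift is the expected one. Both points rest on the detailed wall-and-chamber description of $U(X)\cong\C\times\uu$ in \cite{Bridgeland:K3}; the positivity of the rank of $\delta$, which makes $S$ an honest $\mu$-stable bundle with $\Hom(S,\O_x)\neq 0$, is what fixes the direction of the twist and hence the orientation.
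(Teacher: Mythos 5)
Your overall strategy---factoring the loop into two half-turns and using the equivariance $\pi\circ\ST_S=s_\delta\circ\pi$---founders on the key identification $\sigma_1=\ST_S(\sigma_0)$, and this step is not merely unproven but false. The equivariance only tells you that $\ST_S(\sigma_0)$ projects to $s_\delta(\Omega_0)$; it does not tell you that $\ST_S(\sigma_0)$ is the endpoint of the lift of your half-turn. What Bridgeland's wall-and-chamber analysis actually says---and this is precisely what the paper's proof invokes---is that crossing the \emph{single} wall of the geometric chamber lying over the locus $w\in\R_{<0}$ lands you directly in $\ST_{S_\delta}^{\pm 2}(U(X))$: the square arises from one wall-crossing, not from two crossings each contributing one twist. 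Your half-turn crosses that wall exactly once, so its lift ends at the point of $\ST_{S_\delta}^{\pm 2}(U(X))$ over $s_\delta(\Omega_0)$, i.e.\ at $\ST_{S_\delta}^{\pm 2}(\tau)$ where $\tau\in U(X)$ is the geometric stability condition over $s_\delta(\Omega_0)$. This equals $\ST_S(\sigma_0)$ only if $\ST_S^{\mp 1}(\sigma_0)\in U(X)$, i.e.\ only if $\ST_S^{\pm 1}(\O_x)$ is stable in a geometric stability condition---which fails, since $\ST_S(\O_x)\cong K[1]$ with $K=\ker\bigl(S\otimes\Hom(S,\O_x)\to\O_x\bigr)$ a sheaf of positive rank, not a skyscraper. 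Equivalently: if your claim held, $\ST_S(U(X))$ would be the chamber adjacent to $U(X)$ across this wall, forcing $\ST_S(U(X))=\ST_S^{\pm 2}(U(X))$ and hence that $\ST_S$ preserves the geometric chamber, which it does not.

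One can also see the obstruction purely topologically. Over a small punctured-disc neighbourhood of the hole (transverse coordinate $w\neq 0$), the covering $\pi$ splits into connected components; the component containing $\sigma_0$ is the union of the chambers $\ST_S^{2k}(U(X))$, $k\in\Z$, glued along their walls over $w\in\R_{<0}$, while the odd powers $\ST_S^{2k+1}(U(X))$ lie in a different component. Since your half-turn stays in this neighbourhood, its lift from $\sigma_0$ can never reach $\ST_S(\sigma_0)$, so the concatenation you propose is not even a continuous path. The paper's proof avoids all of this: it identifies the two walls $\W^+_\delta$, $\W^-_\delta$ of $U(X)$ obtained by approaching the removed segment from either side with the cases $(A^+)$, $(A^-)$ of \cite[Theorem 12.1]{Bridgeland:K3}, quotes the proof of \cite[Proposition 13.2]{Bridgeland:K3} for the fact that the chambers beyond them are $\ST_{S_\delta}^{\pm 2}(U(X))$, and then observes that the lift of the loop crosses exactly one such wall, so its endpoint lies in $\ST_{S_\delta}^{\pm 2}(U(X))$ with the sign determined by the orientation.
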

Here we have implicitly chosen a geometric stability condition as base point
of $\Stab^\dagger(X)$, so that we can consider the covering map $\pi$ as a map of
pointed topological spaces.

\begin{proof}
Given the line segment in $\h \setminus \uu$ associated to such $\delta$, there are
two corresponding walls $\W^+_\delta$ and $W^-_\delta$ of the geometric chamber,
depending on whether we approach the line segment
from the left or the right, respectively. Let $S_\delta$ be the corresponding spherical vector
bundle, with $Z(S_\delta) \in \R_{<0}$ for $(Z, \P) \in W^\pm_\delta$. These walls are described
by the cases $(A^+)$, $(A^-)$ of \cite[Theorem 12.1]{Bridgeland:K3}, respectively (where  the
vector bundle $A$ in the citation is exactly our vector bundle $S_\delta$).

In the proof of \cite[Proposition 13.2]{Bridgeland:K3}, it is shown that crossing these walls
leads into the image of the geometric chamber under the spherical twist
$\ST_{S_\delta}^{\pm 2}$. 
Now consider the loop $\gamma$ around the hole corresponding to $\delta$, and lift it to a path starting
in the geometric chamber $U(X)$. It follows from the preceding discussion that its endpoint will
lie in $\ST_{S_\delta}^{\pm 2} U(X)$, with the sign depending on the orientation of the loop. 
\end{proof}


\subsection*{Rigid and semirigid objects}
The following important definition  generalizes the notions of rigid and semirigid coherent sheaves from \cite{Mukai:BundlesK3}:

\begin{defn}
An object  $E\in \D(X)$ will be  called rigid or semirigid if 
\begin{itemize}
\item[(a)] $\dim_{\C} \Hom^1_X(E,E) =0 \text{ or }2$,
respectively, and

\item[(b)] $\Hom^i_X(E,E)=0 \text{ for all }i<0$.
\end{itemize}
\end{defn}

We say that an object $E\in \D(X)$ is (semi)rigid if it is either rigid or semirigid.
It follows from Riemann-Roch and Serre duality that if $E$ is (semi)rigid, then the Mukai vector $v(E)$ satisfies $(v(E),v(E))\leq 0$, with strict inequality in the rigid case. 

We will need a derived category version of \cite[Corollary 2.8]{Mukai:BundlesK3}:

\begin{lemma}
\label{stan}
Suppose $A\to E\to B$ is an exact triangle and $\Hom_X(A,B)=0$. Then
\[\dim_\C \Hom^1_X(E,E) \geq \dim_\C \Hom^1_X(A,A) + \dim_\C \Hom^1_X(B,B).\]
\end{lemma}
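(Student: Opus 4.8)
The plan is to analyze the long exact sequences obtained by applying $\Hom_X(-,-)$ to the exact triangle $A \to E \to B$ in both variables, and to extract the desired inequality on $\dim_{\C} \Hom^1_X(E,E)$ by comparing dimensions of $\Ext^1$-groups. First I would apply the functor $\Hom_X(E,-)$ to the triangle, producing a long exact sequence relating $\Hom^i_X(E,A)$, $\Hom^i_X(E,E)$ and $\Hom^i_X(E,B)$; similarly I would apply $\Hom_X(-,A)$ and $\Hom_X(-,B)$ to the triangle to relate these mixed groups back to the ``pure'' groups $\Hom^i_X(A,A)$ and $\Hom^i_X(B,B)$. The strategy is to use the vanishing hypothesis $\Hom_X(A,B) = 0$ together with Serre duality on the K3 surface (which gives $\Hom^2_X(B,A) \cong \Hom_X(A,B)^\vee = 0$, since $\omega_X \cong \O_X$) to kill enough boundary terms that the contributions of $\Hom^1_X(A,A)$ and $\Hom^1_X(B,B)$ both inject into $\Hom^1_X(E,E)$ in a way that makes their dimensions add.

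Concretely, I would first establish the two half-steps. From the triangle applied in the second variable with source $A$, the relevant piece of the long exact sequence is
\[
\Hom_X(A,B) \lra \Hom^1_X(A,A) \lra \Hom^1_X(A,E),
\]
and since $\Hom_X(A,B) = 0$ the map $\Hom^1_X(A,A) \to \Hom^1_X(A,E)$ is injective, so $\dim_{\C}\Hom^1_X(A,E) \geq \dim_{\C}\Hom^1_X(A,A)$. Symmetrically, applying the triangle in the first variable with target $B$ and using $\Hom^2_X(B,A) = 0$ (from Serre duality and $\Hom_X(A,B)=0$) gives injectivity of $\Hom^1_X(B,B) \to \Hom^1_X(E,B)$, so $\dim_{\C}\Hom^1_X(E,B) \geq \dim_{\C}\Hom^1_X(B,B)$. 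The final step is to combine these with the long exact sequence for $\Hom^1_X(E,E)$: applying $\Hom_X(E,-)$ to $A \to E \to B$ shows $\Hom^1_X(E,A)$ and $\Hom^1_X(E,B)$ control $\Hom^1_X(E,E)$, and applying $\Hom_X(-,E)$ to the triangle relates $\Hom^1_X(A,E)$ and $\Hom^1_X(E,E)$; I expect that chaining these together, again discarding boundary terms that vanish by the two hypotheses, exhibits $\Hom^1_X(A,A)$ and $\Hom^1_X(B,B)$ as dimensions of two subquotients of $\Hom^1_X(E,E)$ whose ranks add.

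The main obstacle I anticipate is bookkeeping the interaction between the four long exact sequences so that the two injections land in \emph{independent} parts of $\Hom^1_X(E,E)$ rather than overlapping. The clean way to handle this is probably to avoid juggling all four sequences simultaneously and instead argue via a single additivity-of-Euler-characteristic computation supplemented by the targeted vanishings: one compares $\chi(E,E)$ with $\chi(A,A)+\chi(B,B)$ using bilinearity of the Mukai pairing on the triangle, which gives an exact relation, and then uses $\Hom_X(A,B)=0$, its Serre dual, and condition (b) of (semi)rigidity (no negative $\Ext$'s) to turn the Euler-characteristic identity into the stated inequality on the degree-$1$ term. I would double-check that no hypothesis beyond $\Hom_X(A,B)=0$ is actually needed for the statement as phrased, or whether the implicit setting assumes $A$, $B$ to have no negative self-extensions; the delicate point is ensuring the lower-degree and higher-degree $\Ext$ groups cancel correctly rather than contributing with the wrong sign.
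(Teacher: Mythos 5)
Your two ``half-step'' injections are correct: the long exact sequence obtained from $\Hom_X(A,\blank)$ together with $\Hom_X(A,B)=0$ gives $\Hom^1_X(A,A)\into\Hom^1_X(A,E)$, and the one from $\Hom_X(\blank,B)$ gives $\Hom^1_X(B,B)\into\Hom^1_X(E,B)$ (for the latter the relevant boundary term is again $\Hom_X(A,B)$, so you only need the hypothesis itself there, not its Serre dual $\Hom^2_X(B,A)$). But the argument has a genuine gap exactly at the point you flag: neither $\Hom^1_X(A,E)$ nor $\Hom^1_X(E,B)$ sits inside $\Hom^1_X(E,E)$ --- each is linked to it only by a further long exact sequence in the other variable --- and chaining the sequences one variable at a time always leaves one of the two contributions needing a natural map that does not exist (e.g.\ $\Hom^1_X(A,A)\to\Hom^1_X(E,A)$, or $\Hom^1_X(B,B)\to\Hom^1_X(B,E)$). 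Your fallback via additivity of $\chi$ cannot repair this: the lemma is stated for arbitrary objects of $\D(X)$, so the groups $\Hom^i_X$ in negative degrees and in degrees $\geq 3$ need not vanish, and an Euler-characteristic identity cannot isolate the degree-one terms. (No hypotheses beyond $\Hom_X(A,B)=0$ are in fact needed for the lemma as stated; it is only the Euler-characteristic route that would require them.)

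The paper resolves the independence issue by handling both classes simultaneously: it introduces the vector space $V$ of morphisms of triangles from $A\to E\to B\to A[1]$ to its shift by $[1]$, i.e.\ commuting triples $(\alpha,\gamma,\beta)$ with $\alpha\in\Hom^1_X(A,A)$, $\gamma\in\Hom^1_X(E,E)$, $\beta\in\Hom^1_X(B,B)$. The projection $F\colon V\to\Hom^1_X(A,A)\oplus\Hom^1_X(B,B)$ is surjective because the obstruction to completing a pair $(\alpha,\beta)$ to a morphism of triangles is $h[1]\circ\beta-\alpha[1]\circ h\in\Hom^2_X(B,A)\cong\Hom_X(A,B)^{*}=0$ --- this is where Serre duality actually enters --- while the projection $G\colon V\to\Hom^1_X(E,E)$, $(\alpha,\gamma,\beta)\mapsto\gamma$, is injective by a diagram chase using $\Hom_X(A,B)=0$, which is precisely your two injections run in parallel. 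Comparing $\dim V$ through $F$ and $G$ gives the inequality. This is the packaging your approach is missing; without it, the ``two subquotients whose ranks add'' step remains unproved.
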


\begin{proof}
Consider the space $V$ of maps of triangles
\begin{equation*}\begin{CD}
 A& @>f>> &E &@>g>> &B&@>h>> &A[1] \\
@V{\alpha}VV && @V{\gamma}VV && @V{\beta}VV && @V{\alpha[1]}VV\\
A[1] &@>f[1]>> &E[1]&@>g[1]>> &B[1] &@>h[1]>>
&A[2].\end{CD}\end{equation*} 
There are obvious maps
\[F\colon V\to \Hom^1_X(A,A)\oplus \Hom^1_X(B,B), \quad G\colon V\to \Hom^1_X(E,E).\]
The result follows from the two claims that $F$ is surjective and $G$ is injective.

For the first claim, note that given maps $\alpha\colon A\to A[1]$ and $\beta\colon B\to B[1]$ we obtain a map of triangles as above, because the difference
\[h[1]\circ \beta - \alpha[1]\circ h\colon B\to A[2]\]
 vanishes by the assumption and Serre duality. For the second claim, a simple diagram chase using the assumption $\Hom_X(A,B)=0$ shows that  any map of triangles as above in which $\gamma=0$ is necessarily zero.
\end{proof}

The next result is a consequence of \cite[Proposition 2.9]{HMS:generic_K3s}. For the reader's convenience we include the easy proof here.

\begin{lemma}
\label{stab}
Let $\sigma$ be a stability condition on $\D(X)$, and $E$ an object of $\D(X)$.
\begin{itemize}
\item[(a)]
If $E$ is  rigid, then all stable factors of $E$ are rigid.
\item[(b)] If $E$ is semirigid, then all stable factors of $E$ are rigid or semirigid, and at most one of them is semirigid.
\end{itemize}
\end{lemma}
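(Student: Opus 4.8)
The plan is to bound $\dim_\C\Hom^1_X(E,E)$ from below by a sum of contributions coming from ``isotypic blocks'' of the stable factors, and then to read off both statements by Riemann--Roch applied to each block. Throughout I use that a stable object $S$ satisfies $\Hom^{<0}_X(S,S)=0$ and $\Hom^0_X(S,S)=\C$, so that Serre duality and Riemann--Roch give $\dim_\C\Hom^1_X(S,S)=2+(v(S),v(S))$ with $(v(S),v(S))$ an even integer $\geq -2$; thus every stable object is \emph{rigid} (when $(v(S),v(S))=-2$), \emph{semirigid} (when $(v(S),v(S))=0$), or has $\dim_\C\Hom^1_X(S,S)\geq 4$ (when $(v(S),v(S))\geq 2$).

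First I would reduce to the case of a single stable type by repeatedly applying Lemma \ref{stan}. If $E$ has semistable HN factors $A_1,\dots,A_m$ of phases $\phi_1>\dots>\phi_m$, the triangle $A_1\to E\to E'$, with $E'$ carrying the factors $A_2,\dots,A_m$, satisfies $\Hom_X(A_1,E')=0$, so Lemma \ref{stan} gives $\dim_\C\Hom^1_X(E,E)\geq \dim_\C\Hom^1_X(A_1,A_1)+\dim_\C\Hom^1_X(E',E')$. For a semistable object, i.e.\ an object of a single finite length abelian category $\P(\phi)$, the key move is to peel off a maximal isotypic subobject: fix a stable summand $T$ of the socle and let $A\subseteq E$ be the largest subobject all of whose Jordan--H\"older factors are isomorphic to $T$. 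Then $\operatorname{soc}(E/A)$ contains no copy of $T$ (otherwise $A$ could be enlarged), so any nonzero map $A\to E/A$ would have $T$-isotypic nonzero image whose socle lies in $\operatorname{soc}(E/A)$, a contradiction; hence $\Hom_X(A,E/A)=0$ and Lemma \ref{stan} applies once more. Iterating both moves expresses the stable factors of $E$ as a disjoint union of those of finitely many ``leaves'' $L_1,\dots,L_t$, where each $L_a\in\P(\phi_a)$ is isotypic with all factors isomorphic to a single stable object $S_a$, and yields
\[\dim_\C\Hom^1_X(E,E)\ \geq\ \sum_{a=1}^{t}\dim_\C\Hom^1_X(L_a,L_a).\]

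Each leaf $L_a$ lies in the heart $\P(\phi_a)$, so $\Hom^{<0}_X(L_a,L_a)=0$ and, by Serre duality, $\Hom^2_X(L_a,L_a)\isom\Hom^0_X(L_a,L_a)^\vee$ and $\Hom^{>2}_X(L_a,L_a)=0$. Writing $k_a$ for the length of $L_a$, so that $v(L_a)=k_a\,v(S_a)$, Riemann--Roch gives
\[\dim_\C\Hom^1_X(L_a,L_a)=2\dim_\C\Hom^0_X(L_a,L_a)+k_a^2\,(v(S_a),v(S_a)).\]
For (a), suppose $E$ is rigid, so the previous sum vanishes and each term is $0$. Since $\dim_\C\Hom^0_X(L_a,L_a)\geq 1$, vanishing of a term forces $k_a^2(v(S_a),v(S_a))\leq -2$, whence $(v(S_a),v(S_a))=-2$; thus every $S_a$, and hence every stable factor of $E$, is rigid.

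For (b), suppose $E$ is semirigid, so the sum equals $2$ and each (even, nonnegative) term is at most $2$. If some $S_a$ had $(v(S_a),v(S_a))\geq 2$ its term would be at least $2\cdot 1+2=4$, a contradiction; hence every stable factor is rigid or semirigid. To see that at most one factor is semirigid, note that a semirigid type contributes a term $2\dim_\C\Hom^0_X(L_a,L_a)$. Two semirigid factors of different stable types lie in different leaves, each contributing at least $2$, for a total $\geq 4$; while a single semirigid type with multiplicity $\geq 2$ lying in one leaf $L_a$ of length $\geq 2$ admits the endomorphism $L_a\onto S_a\into L_a$ (a top quotient followed by a socle inclusion), which is nonzero, non-invertible, hence not a scalar, so $\dim_\C\Hom^0_X(L_a,L_a)\geq 2$ and its term is again $\geq 4$. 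Either alternative contradicts the value $2$, proving (b). The main obstacle is exactly this bookkeeping for repeated factors of a single phase: Lemma \ref{stan} cannot separate isomorphic stable factors, since their $\Hom$ is nonzero, which is why the isotypic-subobject peeling and the elementary bound $\dim_\C\Hom^0_X(L_a,L_a)\geq 2$ for isotypic leaves of length $\geq 2$ are the crucial inputs.
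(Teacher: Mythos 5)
Your proof is correct and follows essentially the same route as the paper's: reduce to the semistable case by applying Lemma \ref{stan} along the HN filtration, peel off maximal isotypic subobjects (with the same $\Hom(A,E/A)=0$ observation) to reduce to isotypic pieces, and finish each isotypic piece with Riemann--Roch plus the remark that a non-simple isotypic object has $\dim_\C\Hom^0\geq 2$ via an endomorphism factoring through $S$. The only difference is organizational — you carry out the full decomposition into leaves and sum the resulting inequality, where the paper argues by induction on length — so there is nothing substantive to add.
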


\begin{proof}
First note that applying Lemma \ref{stan} repeatedly to the HN filtration of $E$ allows us to reduce to the case when  the object $E\in \P(\phi)$ is in fact semistable.

If $E$ has more than one non-isomorphic stable factor, then by taking a maximal subobject whose Jordan-H{\"o}lder factors are all isomorphic, we can find a nontrivial short exact sequence \[0\lra A\lra E\lra B\lra 0\] in $\P(\phi)$ with $\Hom_X(A,B)=0$. Applying  Lemma \ref{stan} shows that $A$ and $B$ are also either rigid or semirigid, with at most one being semirigid, and we can then proceed by induction on the length of $E$ in the finite length category $\P(\phi)$.

Thus we can assume that all stable factors of $E$ are the same object $S$. If $E$ is stable the claim is trivial. Otherwise there are non-identity maps $E\to E$ obtained by factoring through a copy of $S$. In particular $\dim_\C \Hom_X(E,E)>1$. Since $E$ is semirigid it follows that $v(E)^2<0$. But $v(E)^2=n^2 \cdot v(S)^2$ so also  $v(S)^2<0$, and it follows from Riemann-Roch and Serre duality  that $S$ is rigid. \end{proof}


\subsection*{Wall-crossing}
\label{wallcr}
Fix a stability condition $\sigma_0=(Z_0,\P_0)\in \Stab(X)$ and a phase $\phi\in \R$. Recall that the category $\P_0(\phi)$ of semistable objects of phase $\phi$ is a finite length abelian category whose simple objects are the stable objects of phase $\phi$. Let  us now fix a Serre subcategory $\A\subset \P_0(\phi)$: this corresponds to choosing some subset of the stable objects of phase $\phi$ and considering only those objects of $\P_0(\phi)$  whose stable factors lie in this subset. Let \[U=B_{\frac{1}{2}}(\sigma_0)\subset \Stab(X)\] be the ball of radius $\frac{1}{2}$ at $\sigma_0$ with respect to the standard metric on $\Stab(X)$.  
Given a stability condition $\sigma=(Z,\P)\in U$, the restriction of the central charge $Z$ defines a (tilted) stability function on the abelian category $\A$. We then have two notions of stability for objects $F\in \A$, namely stability  with respect to the stability condition  $\sigma$, and stability with respect to the stability function $Z$ on the abelian category $\A$.
The following useful result addresses the relationship between these two notions.

\begin{lemma}
\label{reduce}
For every $R>0$ there is a neighbourhood $\sigma_0\in U(R)\subset U$ with the following property: if $\sigma=(Z,\P)\in U(R)$ and $F\in \A$ satisfies $|Z_0(F)|<R$, then $F$ is $\sigma$-semistable (resp. $\sigma$-stable) precisely if it  is $Z$-semistable (resp. $Z$-stable).
\end{lemma}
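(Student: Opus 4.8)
The plan is to reinterpret both notions of stability as semistability inside one fixed abelian category and then to compare them. Fix a real number $\alpha$ slightly below $\phi$ and let $\mathcal{H}=\P((\alpha,\alpha+1])$ be the corresponding heart of $\sigma=(Z,\P)$; then an object of $\mathcal{H}$ is $\sigma$-(semi)stable precisely when it is semistable as an object of the abelian category $\mathcal{H}$ with stability function $Z$, where we write $\phi_Z(\blank)=\tfrac1\pi\arg Z(\blank)$ for the induced phase. Thus the content of the lemma is a comparison of $Z$-semistability in the subcategory $\A$ with $Z$-semistability in $\mathcal{H}$. First I would record two structural facts valid once $d(\sigma,\sigma_0)<\epsilon$ for $\epsilon$ small. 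By the defining property of the metric, every $G\in\P_0(\phi)$ has $\phi^\sigma_\pm(G)\in(\phi-\epsilon,\phi+\epsilon)$, so all its $\sigma$-HN phases lie in this cone; choosing $\alpha\in(\phi-1+\epsilon,\phi-\epsilon)$ this gives $\A\subset\P_0(\phi)\subset\mathcal{H}$. Moreover $\P_0(\phi)$ is an \emph{exact} abelian subcategory of $\mathcal{H}$: a short exact sequence in $\P_0(\phi)$ is a triangle all of whose vertices lie in the heart $\mathcal{H}$, hence is short exact there as well, so kernels and cokernels in $\P_0(\phi)$ agree with those in $\mathcal{H}$.

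One inclusion is then immediate. Since $\A\hookrightarrow\mathcal{H}$ is exact, every subobject of $F$ in $\A$ is a subobject of $F$ in $\mathcal{H}$; hence if $F$ is $Z$-(semi)stable in $\mathcal{H}$ — that is, $\sigma$-(semi)stable — then it is $Z$-(semi)stable in $\A$.

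For the converse, suppose $F\in\A$ with $|Z_0(F)|<R$ is $Z$-(semi)stable in $\A$ but fails to be $\sigma$-(semi)stable. Then $\mathcal{H}$ contains a subobject $A\hookrightarrow F$ with $\phi_Z(A)>\phi_Z(F)$ (resp. with $\phi_Z(A)=\phi_Z(F)$ and $0\neq A\neq F$), which I take to be a maximal-phase $\sigma$-HN factor (resp. a proper subobject of equal phase). The whole point — and the step I expect to require the most care — is to prove that such an $A$ already lies in $\A$, and this is where the mass bound and the support condition enter. Because $A$ and $F/A$ have $\sigma$-phases in the narrow cone $(\phi-\epsilon,\phi+\epsilon)$ there is almost no cancellation, so $|Z(A)|\le|Z(F)|/\cos(\pi\epsilon)$ is bounded in terms of $R$; applying the support condition (with a constant uniform on the ball $U$) we deduce that $v(A)$, and likewise the classes of the $\sigma_0$-HN factors of $A$, range over a fixed finite subset of the lattice $\N(X)$. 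Hence only finitely many values of $\arg Z_0$ can occur among these classes. On the other hand property (b) of the metric forces $\arg Z_0(A)$ to within $2\pi\epsilon$ of $\pi\phi$, and the metric confines the $\sigma_0$-HN phases of $A$ to $(\phi-2\epsilon,\phi+2\epsilon)$. Taking $\epsilon$ smaller than half the minimal gap between $\phi$ and the other finitely many possible phases, I conclude that every $\sigma_0$-HN factor of $A$ has phase exactly $\phi$, so $A\in\P_0(\phi)$.

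Once $A\in\P_0(\phi)$, the monomorphism $A\hookrightarrow F$ in $\mathcal{H}$ is a monomorphism in $\P_0(\phi)$ (kernels agree by exactness), and as $\A$ is a Serre subcategory of $\P_0(\phi)$ it follows that $A\in\A$. This exhibits a subobject of $F$ in $\A$ violating its $Z$-(semi)stability, a contradiction. Finally, the hypothesis $|Z_0(F)|<R$ is exactly what makes the relevant finite set of classes — and hence the threshold on $\epsilon$ — independent of $F$, so a single neighbourhood $U(R)$ works for all such $F$; this uniformity, together with the phase-confinement step, is the delicate part of the argument.
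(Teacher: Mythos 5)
Your proof is correct and follows essentially the same strategy as the paper's: the easy direction by pushing a destabilizing sequence from $\A$ into the ambient heart, and the hard direction by using the support property to make the set of relevant Mukai vectors finite, so that the $\sigma_0$-phases of the HN factors of any $\sigma$-destabilizer are pinned to exactly $\phi$, forcing the destabilizer into $\P_0(\phi)$ and hence into the Serre subcategory $\A$. The paper packages this last step as the persistence of strict phase inequalities for the finitely many relevant classes, but the underlying mechanism is identical.
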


\begin{proof}
First consider an arbitrary  $\sigma=(Z,\P)\in U$. If $F\in \A$ is $Z$-unstable, then there is a short exact sequence
\[0\lra A \lra F \lra B\lra 0\]
in $\A$ such that $\phi(A)>\phi(F)>\phi(B)$. Since $\sigma\in U$ there is an inclusion $\A\subset
\P(\phi-\frac{1}{2},\phi+\frac{1}{2})$, and  it follows easily that $F$ is also $\sigma$-unstable
(see \cite[Prop.~5.3]{Bridgeland:Stab}).

For the converse, take $0<\epsilon<\frac{1}{8}$ and consider triangles 
\begin{equation}
\label{trilemma}A \lra F \lra B\lra A[1]\end{equation}
in $\D(X)$, all of whose objects lie in the subcategory $\P_0(\phi-2\epsilon,\phi+2\epsilon)$, and such that $F\in \A$ satisfies $|Z_0(F)|<R$. Semistability of $F$ in $\sigma_0$  ensures that for any such triangle  there are inequalities
\begin{equation}
\label{inequ}\phi_0(A)\leq\phi_0(F)\leq\phi_0(B),\end{equation}
where $\phi_0$ denotes the phase function for the stability condition $\sigma_0$.
 Moreover, it is easy to see that if equality holds in \eqref{inequ} then  $A,B\in \P_0(\phi)$, and hence, in fact,   $A,B\in \A$.

By the support property for $\sigma_0$, the set of possible Mukai vectors $v(A)$ and $v(B)$ is finite. Thus we can choose an open neighbourhood $\sigma_0\in U(R)\subset U$ small enough so that whenever the inequality \eqref{inequ} is strict, the same inequality of phases holds  for all $\sigma\in U(R)$. We can also assume that $U(R)\subset B_\epsilon(\sigma_0)$ is contained in the $\epsilon$-ball centered at $\sigma_0$.

Now suppose that $F\in \A$ satisfies $|Z_0(F)|<R$ and take a stability condition $\sigma=(Z,\P)\in U(R)$.
Then $F\in \P(\phi-\epsilon,\phi+\epsilon)$. Suppose that $F$ is  $\sigma$-unstable. Then we
can find a  triangle \eqref{trilemma} with all objects lying in $\P(\phi-\epsilon,\phi+\epsilon)$,
and such that $\phi(A)>\phi(F)>\phi(B)$.  All objects of this triangle then  lie in
$\P_0(\phi-2\epsilon,\phi+2\epsilon)$, so our assumption ensures that equality holds in \eqref{inequ} and hence  $A,B\in \A$. It follows  that   $F$ is  also $Z$-unstable.

We have now proved that $F$ is $Z$-semistable precisely if it is $\sigma$-semistable. A similar argument which we leave to the reader now shows furthermore that $F$ is $Z$-stable precisely if it is $\sigma$-stable. 
 \end{proof}

\begin{remark}
It follows immediately that for objects $F\in \A$ with $|Z_0(F)|<R$ and for stability conditions $\sigma=(Z,\P)\in U(R)$, the HN filtration of $F$ with respect to $\sigma$ coincides with the HN filtration of $F$ in $\A$ with respect to the stability function $Z$ (note that this latter filtration automatically exists because $\A\subset \P(\phi)$ is of finite length). A similar remark applies to Jordan-H{\"o}lder filtrations. 
\end{remark}

 In studying wall-crossing behaviour, the following definition is often useful.

\begin{defn}
An object $F\in \D(X)$  is said to be quasistable in a stability condition $\sigma$ if it is semistable, and all its stable factors have Mukai vectors lying on the same ray $\R_{>0} \cdot v\subset \N(X)\tensor \R$.
\end{defn}

Note that if $v(F)\in \N(X)$ is primitive, then $F$ is quasistable precisely if it is stable.  The
following result is a mild generalization of  \cite[Prop.~9.4]{Bridgeland:K3}, and can be proved using the same argument given there. Instead we give an easy proof using Lemma \ref{reduce}.

\begin{prop}
\label{open}
The set of points $\sigma\in \Stab(X)$ for which a given object $F\in \D(X)$ is stable (respectively quasistable) is open.
\end{prop}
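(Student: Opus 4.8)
The plan is to prove openness of both the stable and the quasistable loci by leveraging Lemma \ref{reduce}, which reduces stability in a neighbourhood to the purely lattice-theoretic notion of $Z$-stability inside a fixed finite-length abelian category. The key observation is that stability is a \emph{local} condition: it suffices to show that for each $\sigma_0$ in the locus where $F$ is stable (resp.\ quasistable), there is an open ball around $\sigma_0$ on which $F$ remains stable (resp.\ quasistable).

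First I would fix a point $\sigma_0=(Z_0,\P_0)$ at which $F$ is stable. Then $F$ is semistable of some phase $\phi$, so it lies in the finite-length category $\P_0(\phi)$. I would take $\A=\P_0(\phi)$ itself (or the Serre subcategory generated by the finitely many stable factors appearing in a suitable bounded region), choose $R>\abs{Z_0(F)}$, and invoke Lemma \ref{reduce} to obtain a neighbourhood $U(R)$ on which $\sigma$-stability of $F$ is equivalent to $Z$-stability in $\A$. Now $Z$-stability is governed by the stability function on the finite-length category $\A$: a potential destabilising subobject $A\subset F$ has $v(A)$ drawn from a finite set by the support property, so the open inequalities $\phi(A)<\phi(F)$ defining stability persist under a small perturbation of the central charge. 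Shrinking $U(R)$ so that all the relevant strict phase inequalities remain strict (a finite condition, since the possible $v(A)$ are finitely many and $\Re,\Im$ of $Z$ vary continuously with $\sigma$) shows that $F$ stays $Z$-stable, hence $\sigma$-stable, throughout. This gives openness of the stable locus.

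For quasistability the argument runs in parallel, with the observation that the defining condition has two parts: semistability, and the requirement that all stable factors have Mukai vectors on a single ray $\R_{>0}\cdot v$. The semistability part is handled exactly as above. For the ray condition, I note that by the remark following Lemma \ref{reduce}, the Jordan--H\"older factors of $F$ with respect to $\sigma$ agree with those computed from the stability function $Z$ on $\A$; since the stable factors of $F$ at $\sigma_0$ all have Mukai vectors proportional to some fixed $v$, and since these factors are controlled by the finite-length structure of $\A$ (which is unchanged throughout $U(R)$), the stable factors at nearby $\sigma$ are again drawn from this same finite set of classes lying on the ray. Thus quasistability is preserved on a neighbourhood.

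The main obstacle I anticipate is ensuring that the \emph{finitely many} potential destabilisers are genuinely controlled uniformly: one must check that passing to a small enough ball really does rule out the emergence of new subobjects or quotients that were not $\sigma_0$-subobjects but become destabilising at nearby $\sigma$. This is precisely what Lemma \ref{reduce} and its support-property input are designed to handle—any such object lies in $\P_0(\phi-2\epsilon,\phi+2\epsilon)$ and hence, by the equality-case analysis in the lemma, must lie in $\A$—so the difficulty is essentially absorbed by that lemma. The only genuine care needed is in the quasistable case, where one must confirm that the set of stable factors cannot acquire a class off the ray; this follows because any such class would correspond to a Jordan--H\"older factor, and these are computed inside the fixed category $\A$ via $Z$, whose stable objects on the given ray vary continuously and remain the only ones appearing.
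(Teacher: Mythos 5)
Your overall strategy is the same as the paper's: both proofs run everything through Lemma \ref{reduce}, reducing $\sigma$-stability near $\sigma_0$ to $Z$-stability in a fixed finite-length abelian category $\A$. For the stable case your argument works: the classes of subobjects of $F$ in $\A$ form a finite set by the support property, so the strict inequalities $\phi_Z(A)<\phi_Z(F)$ persist on a small ball.

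The soft spot is the sentence ``the semistability part is handled exactly as above'' in the quasistable case. It is not: the argument above rests on \emph{strict} inequalities being open, whereas a quasistable but non-stable $F$ has proper subobjects of \emph{equal} phase (those built from its Jordan--H\"older factors), and for these the non-strict inequality $\phi_Z(A)\le\phi_Z(F)$ could a priori flip under perturbation. What rescues this is precisely the ray condition, and the paper exploits it by making the sharper choice of $\A$ as the Serre subcategory generated by the stable factors of $F$: since these all have proportional Mukai vectors, every stability function coming from $U(R)$ maps $K(\A)$ onto a line in $\C$, so \emph{every} object of $\A$ is automatically $Z$-semistable and $Z$-stability coincides with simplicity --- a condition independent of $Z$. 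This disposes of both cases at once with no inequality-chasing. Your argument can be repaired the same way, or by noting explicitly that every subobject of $F$ in $\P_0(\phi)$ has class equal to a partial sum of the classes of the Jordan--H\"older factors of $F$ and hence again lies on the ray; but as written the quasistable case has a gap at exactly the step you gloss over.
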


\begin{proof}
Let $F$ be semistable in some stability condition $\sigma_0=(Z_0,\P_0)$. Choose $R>|Z_0(F)|$  and
apply Lemma \ref{reduce} with $\A\subset \P(\phi)$ being the abelian subcategory generated by the stable factors of $F$. When $F$ is quasistable  all these stable factors have proportional Mukai vectors, so the  stability functions on $\A$ induced by stability conditions in $U(R)$  map $K(\A)$ onto a line in $\C$. For such stability functions all objects of $\A$ are $Z$-semistable, and an object is $Z$-stable precisely if it is simple. The result therefore follows from Lemma \ref{reduce}.
\end{proof}


\section{Walls and chambers}
\label{sec:walls}

From now on, let $X$ be a complex projective K3 surface of Picard rank 1.  We shall also fix a (semi)rigid object $E\in \D(X)$ satisfying
\[\Hom_X(E,E)=\C.\] Eventually, in Section \ref{sec:conclusion}, we will take $E$ to be  a skyscraper sheaf $\O_x$.

\subsection*{Stable objects of the same phase}

Let $\sigma\in \Stab(X)$ be a stability condition.
We first gather some simple results about the relationship between (semi)rigid stable objects in $\sigma$  of some fixed phase.

\begin{lemma}
\label{obv}
Suppose that $S_1,S_2$ are non-isomorphic stable objects of the same phase, at least one of which is rigid. Then the  Mukai vectors $v(S_i)$ are linearly independent in $\N(X)$.  \end{lemma}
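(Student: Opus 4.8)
The plan is to assume, for contradiction, that $v(S_1)$ and $v(S_2)$ are linearly dependent, and to derive a contradiction by computing the Euler pairing $\chi(S_1,S_2)$ in two incompatible ways. I may as well label $S_1$ as the object which is rigid.

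First I would extract the consequences of stability. Since $S_1$ and $S_2$ are stable of the same phase $\phi$ and non-isomorphic, they are distinct simple objects of the finite length abelian category $\P(\phi)$, so $\Hom_X(S_1,S_2)=\Hom_X(S_2,S_1)=0$. The standard phase inequality for semistable objects (a nonzero map forces the phase of the source to be at most that of the target) gives $\Hom^k_X(S_1,S_2)=0$ for all $k<0$; since the Serre functor of a K3 surface is the shift $[2]$, the duality $\Hom^k_X(S_1,S_2)\cong \Hom^{2-k}_X(S_2,S_1)^\check$ then forces vanishing for $k>2$ as well, and together with $\Hom_X(S_2,S_1)=0$ it gives $\Hom^2_X(S_1,S_2)=0$. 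Hence the only possibly nonzero graded piece is in degree one, so that by Riemann-Roch
\[
-(v(S_1),v(S_2))=\chi(S_1,S_2)=-\dim_\C\Hom^1_X(S_1,S_2)\le 0.
\]

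Next I would compute $(v(S_1),v(S_1))$ using rigidity. As $S_1$ is stable we have $\Hom_X(S_1,S_1)=\C$, and rigidity gives $\Hom^1_X(S_1,S_1)=0$; applying Serre duality as above yields $\Hom^2_X(S_1,S_1)=\C$ with vanishing in all other degrees. Therefore $\chi(S_1,S_1)=2$, i.e.\ $(v(S_1),v(S_1))=-2$: a rigid stable object is automatically spherical.

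Finally, under the assumption that $v(S_1)$ and $v(S_2)$ are linearly dependent, the fact that both central charges lie on the common ray $\R_{>0}\cdot e^{i\pi\phi}$ forces the relation $v(S_2)=\lambda\,v(S_1)$ with $\lambda\in\R_{>0}$. Then $\chi(S_1,S_2)=-\lambda\,(v(S_1),v(S_1))=2\lambda>0$, contradicting the inequality $\chi(S_1,S_2)\le 0$ obtained above; hence the $v(S_i)$ are linearly independent. The one point requiring care, and the only real obstacle, is the bookkeeping of the $\Hom$-vanishings: one must confirm that the sole surviving contribution to $\RHom(S_1,S_2)$ sits in degree one (making its contribution to $\chi$ negative) and that equal phases force the proportionality constant $\lambda$ to be strictly positive rather than merely real. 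Everything else is a direct application of Riemann-Roch, Serre duality, and the definition of rigidity.
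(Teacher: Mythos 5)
Your proof is correct and follows essentially the same route as the paper: both derive $(v(S_1),v(S_2))\ge 0$ from the $\Hom$-vanishing plus Serre duality and Riemann--Roch, use the common ray of central charges to force a positive proportionality constant, and contradict the negativity of $(v,v)$ imposed by rigidity. The only cosmetic difference is that the paper phrases the final contradiction as $(v_i,v_i)\ge 0$ versus rigidity, while you substitute $(v(S_1),v(S_1))=-2$ explicitly.
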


\begin{proof}
Since the $S_i$ are stable of the same phase, $\Hom_X(S_i,S_j)=0$ for $i\neq j$, and Serre duality and Riemann-Roch then show that $(v_1,v_2)\geq 0$. Suppose there is a non-trivial linear relation between the  vectors $v_i=v(S_i)$.  Since the central charges $Z(S_i)$ lie on the same ray it must take the form $\lambda_1 v_1 = \lambda_2 v_2$ with $\lambda_1,\lambda_2>0$.   But then $( v_i,v_i)\geq 0$ which contradicts the assumption that one of the $S_i$ is rigid.
\end{proof}

The Mukai vectors of rigid and semirigid objects are contained in the cone
\[\CC=\{v\in \N(X)\tensor \R\setminus\{0\} \colon (v,v)\leq 0\}.\]
Since  $X$ has Picard number $\rho(X)=1$,  the lattice $\N(X)$ has signature $(2,1)$, and $\CC$
 is therefore a disjoint union of two connected  components $\CC^\pm$ exchanged by the inverse map $v\mapsto -v$. By convention we take $\CC^+$ to be the component containing the class $(0,0,1)$.
The following elementary observation will be used  frequently:

\begin{lemma}
\label{z}
Suppose $\alpha,\beta\in \CC^+$. Then $(\alpha,\beta)\leq 0$. Moreover
\[(\alpha,\beta)=0 \implies  (\alpha,\alpha)=0=(\beta,\beta),\]
in which case $\alpha, \beta$ are proportional.
\end{lemma}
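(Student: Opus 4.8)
The plan is to reduce the statement to a ``reverse Cauchy--Schwarz'' inequality in Minkowski space. Since $\rho(X)=1$, the real quadratic space $\N(X)\tensor\R$ has signature $(2,1)$, so I may choose a linear isomorphism $\N(X)\tensor\R\isom\R^3$ carrying the Mukai form to $x^2+y^2-z^2$. In these coordinates the cone $\CC$ becomes $\{(x,y,z)\neq 0 : x^2+y^2\leq z^2\}$; any nonzero vector of $\CC$ has $z\neq 0$ (as $z=0$ would force $x=y=0$), so the two nappes are distinguished by the sign of $z$, and after possibly replacing $z$ by $-z$ I may assume that $\CC^+$ (the nappe containing $(0,0,1)$) is exactly $\{z>0,\ x^2+y^2\leq z^2\}$.

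Writing $\alpha=(x_1,y_1,z_1)$ and $\beta=(x_2,y_2,z_2)$ in $\CC^+$, the first step is the inequality $(\alpha,\beta)\leq 0$. I would combine the ordinary Cauchy--Schwarz bound
\[
x_1x_2+y_1y_2\leq\sqrt{x_1^2+y_1^2}\,\sqrt{x_2^2+y_2^2}
\]
with the defining inequalities $\sqrt{x_i^2+y_i^2}\leq z_i$ (valid since $z_i>0$) to obtain $x_1x_2+y_1y_2\leq z_1z_2$, that is, $(\alpha,\beta)=x_1x_2+y_1y_2-z_1z_2\leq 0$.

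For the equality case, note that $(\alpha,\beta)=0$ forces both of the chained inequalities above to be equalities simultaneously. Setting $a_i=\sqrt{x_i^2+y_i^2}$, the identity $a_1a_2=z_1z_2$ together with $0\leq a_i\leq z_i$ and $z_i>0$ gives $(a_1/z_1)(a_2/z_2)=1$ with both factors in $[0,1]$, hence $a_i=z_i$; this is exactly $(\alpha,\alpha)=0=(\beta,\beta)$. Equality in Cauchy--Schwarz then makes the planar parts $(x_i,y_i)$ positively proportional, and since $z_i=a_i$ this proportionality extends to all three coordinates with a nonnegative factor, so $\alpha$ and $\beta$ are proportional and lie in the common nappe $\CC^+$.

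The computation itself is routine; the only point requiring care is the equality analysis --- one must check that the two separate inequalities are forced to be tight at once, and track that the Cauchy--Schwarz proportionality factor is nonnegative so that the resulting proportionality respects the nappe. A coordinate-free alternative would split $\beta=\lambda\alpha+\beta'$ with $\beta'\in\alpha^\perp$ (positive definite) when $(\alpha,\alpha)<0$, but verifying $\lambda>0$ again reduces to the same nappe bookkeeping, so the explicit coordinates seem the cleanest route.
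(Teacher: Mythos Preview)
Your proof is correct. Both you and the paper begin with the same coordinate reduction to the Minkowski form $x^2+y^2-z^2$, but then diverge. The paper first treats the equality case by a pure signature argument (if $(\alpha,\beta)=0$ with $\alpha,\beta\in\CC$, then $\beta$ lies in $\alpha^\perp$, which is positive semidefinite, forcing linear dependence and $(\alpha,\alpha)=(\beta,\beta)=0$); it then deduces the inequality $(\alpha,\beta)\le 0$ by a continuity/connectedness argument on $\CC^+$, using that $(\alpha,\beta)$ never vanishes when one vector lies in the open cone. Your route is instead a direct computation: Cauchy--Schwarz on the spacelike $(x,y)$-part combined with $\sqrt{x_i^2+y_i^2}\le z_i$ yields the inequality immediately, and tracking the two equality conditions gives the null-and-proportional conclusion in one pass. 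Your approach is more explicit and self-contained; the paper's is slightly more conceptual and makes the role of the signature more transparent, but at the cost of needing the separate connectedness step. Either is perfectly adequate for this elementary lemma.
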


\begin{proof}
We can take co-ordinates $(x,y,z)$ on $\N(X)\tensor \R\isom \R^3$ so that the quadratic form associated to $(-,-)$  is $x^2+y^2-z^2$. Then $\CC$ is the set of nonzero vectors with $x^2+y^2\leq z^2$. This set has two connected components given by $\pm z>0$. The claim then follows easily from the Cauchy-Schwarz inequality.
\end{proof}

We note the following simple consequence:

\begin{lemma}
\label{ynew}If there are three non-isomorphic stable  (semi)rigid objects of the same phase, then at most one of them is rigid.
\end{lemma}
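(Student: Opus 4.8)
The plan is to play the stability inequality $(v(S_i),v(S_j))\geq 0$ against the signature constraint of Lemma \ref{z}, exploiting the fact that $\CC$ has only two connected components.

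First I would record the two inputs. Since the $S_i$ are pairwise non-isomorphic and stable of a common phase, we have $\Hom_X(S_i,S_j)=0$ for $i\neq j$; Serre duality gives $\Hom^2_X(S_i,S_j)=\Hom_X(S_j,S_i)^\vee=0$, and Riemann--Roch then yields $(v(S_i),v(S_j))=\dim_\C\Hom^1_X(S_i,S_j)\geq 0$, exactly the computation underlying the proof of Lemma \ref{obv}. On the other hand, each $S_i$ is (semi)rigid, so $v(S_i)$ is a nonzero class (it has nonzero central charge) with $(v(S_i),v(S_i))\leq 0$, strict precisely when $S_i$ is rigid. Thus every $v(S_i)$ lies in the cone $\CC=\CC^+\sqcup\CC^-$.

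The key step is a pigeonhole argument. Among the three classes $v(S_1),v(S_2),v(S_3)\in\CC$, at least two lie in the same component, say $v(S_i)$ and $v(S_j)$. After replacing both by their negatives if they lie in $\CC^-$ --- an operation which changes neither the pairing $(v(S_i),v(S_j))$ nor the self-intersections --- I may assume $v(S_i),v(S_j)\in\CC^+$. Lemma \ref{z} then gives $(v(S_i),v(S_j))\leq 0$, while the stability input gives $(v(S_i),v(S_j))\geq 0$; hence $(v(S_i),v(S_j))=0$. By the equality clause of Lemma \ref{z} this forces $(v(S_i),v(S_i))=0=(v(S_j),v(S_j))$, so neither $S_i$ nor $S_j$ is rigid. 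Therefore at least two of the three objects are semirigid, and at most one of them can be rigid.

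I do not expect any serious obstacle here. The only genuine content is the observation that $\CC$ has exactly two components, so that three classes necessarily contain a same-component pair, together with the fact that Lemma \ref{z} converts such a pair into a pair of isotropic --- hence non-rigid --- classes once the sign $(v(S_i),v(S_j))\geq 0$ is pinned down. The single point requiring care is the $\CC^-$ case, which is precisely why I pass to negatives before invoking Lemma \ref{z}.
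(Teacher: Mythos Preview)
Your proof is correct and uses essentially the same ingredients as the paper: the inequality $(v(S_i),v(S_j))\geq 0$ from stability and Riemann--Roch, together with Lemma~\ref{z} and the two-component structure of $\CC$. The only difference is organizational---you argue directly via pigeonhole that two classes land in the same component and are therefore isotropic, whereas the paper assumes two objects are rigid and derives a contradiction about which components their classes occupy; the content is the same.
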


\begin{proof}
Denote the three objects by $S_1, S_2, S_3$ and their Mukai vectors by $v_i = v(S_i)$. The (semi)rigid assumption gives $(v_i,v_i)\le 0$. As in the proof of Lemma \ref{obv} we have $(v_i,v_j)\ge 0$ for $i\neq j$. We can assume that one of the objects, say $S_1$, is rigid, so that $(v_1,v_1)< 0$. Then by   Lemma \ref{z}, we have  $(v_1,v_i)>0$ for $i=2,3$ and we conclude that $v_1$ lies in one component, say $\CC^+$, and  that $v_2,v_3$ lie in the opposite component, $\CC^-$. Suppose now that one of the objects $S_2$ or $S_3$ is also rigid. Then we can apply the same argument  and conclude that $v_2$ and $v_3$ lie in opposite cones. This gives  a contradiction. 
\end{proof}

This leads to the following useful description of semistable (semi)rigid objects.
 
\begin{prop} \label{cases}
 Suppose that $F$ is a semistable (semi)rigid object. Then exactly one of the following holds:
\begin{itemize}
\item[(a)] $F\isom S^{\oplus k}$ with $S$ a stable spherical object and $k\geq 1$;
\item[(b)] $F$ is stable and semirigid;
\item[(c)] exactly 2 stable objects $S_1,S_2$ occur as stable factors of $F$, and their Mukai vectors $v(S_i)$ are linearly independent in $\N(X)$.
\end{itemize}
\end{prop}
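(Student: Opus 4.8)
The plan is to run a case analysis on the set of pairwise non-isomorphic stable factors of $F$. Since $F$ is semistable, all of its stable factors $S_1,\dots,S_n$ lie in a single finite-length category $\P(\phi)$, and since $F$ is (semi)rigid, Lemma \ref{stab} tells us that every $S_i$ is itself rigid or semirigid and that at most one of them is semirigid. Thus the whole question reduces to controlling how many distinct stable factors can occur, together with their linear-algebraic relations in $\N(X)$.

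First I would bound the number $n$ of distinct stable factors by $2$. The key input is Lemma \ref{ynew}: among any three pairwise non-isomorphic stable (semi)rigid objects of the same phase at most one is rigid. Hence if $n\geq 3$ then at most one of the $S_i$ is rigid; combined with the fact that at most one is semirigid (Lemma \ref{stab}), this leaves at most two distinct factors, a contradiction. So $n\in\{1,2\}$.

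If $n=2$, then since at most one of $S_1,S_2$ is semirigid, at least one of them is rigid, and Lemma \ref{obv} immediately gives that $v(S_1),v(S_2)$ are linearly independent in $\N(X)$; this is case (c). If $n=1$, write $S$ for the unique stable factor, occurring with some multiplicity $k\geq 1$, and split according to whether $S$ is rigid or semirigid. If $S$ is semirigid, then $F$ cannot be rigid (a rigid object has only rigid stable factors by Lemma \ref{stab}(a)), so $F$ is semirigid and Lemma \ref{stab}(b) forces at most one semirigid stable factor; hence $k=1$ and $F=S$ is stable and semirigid, which is case (b). If instead $S$ is rigid, hence spherical, then $\Hom^1_X(S,S)=0$, so every self-extension of $S$ inside the finite-length category $\P(\phi)$ splits; inducting on the length of $F$ then yields $F\cong S^{\oplus k}$, which is case (a).

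Finally the three cases are mutually exclusive: (a) and (b) are distinguished by whether the single stable factor is rigid (spherical) or semirigid, which it cannot be simultaneously, and (c) is distinguished from both by having two distinct stable factors. I expect the only genuinely non-formal step to be the decomposition $F\cong S^{\oplus k}$ in case (a): one must argue that the vanishing $\Hom^1_X(S,S)=0$ really forces the iterated extension in $\P(\phi)$ to be a direct sum, using that $\Ext^1$ computed in the heart is controlled by $\Hom^1_X$. Everything else is a direct bookkeeping of the cited lemmas.
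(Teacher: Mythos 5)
Your proof is correct and follows essentially the same route as the paper: Lemma \ref{stab} to limit the semirigid factors, Lemma \ref{ynew} to bound the number of distinct stable factors by two, and Lemma \ref{obv} for the linear independence in case (c). The only addition is that you spell out why a single rigid stable factor forces $F\isom S^{\oplus k}$ via the vanishing of $\Hom^1_X(S,S)$ and splitting of extensions in $\P(\phi)$ -- a point the paper's proof leaves implicit, and which you justify correctly.
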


\begin{proof}
Lemma \ref{stab} implies that $F$ has at most one semirigid stable factor, the others being rigid.
Therefore Lemma \ref{ynew} shows that, up to isomorphism, there  are at most two objects  occurring as stable factors of $F$.    If only one occurs then we are in cases (a) or (b) according to whether it is rigid or semirigid. If two occur then  Lemma \ref{obv} shows that  we are in case (c).  \end{proof}

Note that in the situation of Prop.~\ref{cases} the object $F$ is quasistable in cases (a) and (b), but not in  case (c).

\subsection*{Codimension one walls}
\label{cod}

Suppose that \[\sigma_0=(Z_0,\P_0)\in \Stab(X)\] is a stability condition, and that $F\in
\P_0(\phi)$ is a (semi)rigid semistable object which is not quasistable. Prop.~\ref{cases} shows that $F$ has exactly two stable factors $S_1,S_2$ up to isomorphism, whose Mukai vectors $v(S_i)$ are linearly independent. Lemma \ref{reduce} shows that to understand stability of $F$ near $\sigma_0$ it is enough to consider stability functions on the abelian subcategory    $\A\subset \P_0(\phi)$ consisting of those objects  all of whose stable factors are isomorphic to one of the $S_i$. Note that the inclusion $\A\subset \D(X)$ induces an injective  group homomorphism
$\Z^2 \cong K(\A)\into \N(X)$, so we can identify $K(\A)$ with the sublattice of $\N(X)$ spanned by the $v(S_i)$.
For future reference we make the following observation:

\begin{lemma}
\label{obs}
Suppose  that  $Z$ is a stability function on $\A$ and let $\Theta\subset (0,1]$ be the set of phases of $Z$-stable objects of $\A$. Suppose that $F\in \A$ is $Z$-stable and rigid. Then $\phi(F)\in \Theta$ is not an accumulation point.
\end{lemma}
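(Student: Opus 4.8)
The plan is to reduce the statement to an elementary fact about lattice points in the negative cone of the rank-two lattice $K(\A)$, where the rigidity of $F$ is exactly what forces $\phi(F)$ to correspond to an \emph{interior} (timelike) ray rather than a null one.

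First I would record the two inequalities that pin down self-intersections. Any $Z$-stable object $G$ of $\A$ is simple, so $\Hom_X(G,G) = \C$ and $\Hom^i_X(G,G) = 0$ for $i<0$; together with Serre duality $\Hom^2_X(G,G) \cong \Hom_X(G,G)^\vee$ and Riemann--Roch this gives $-(v(G),v(G)) = \chi(G,G) = 2 - \dim_\C \Hom^1_X(G,G) \le 2$, so every stable class satisfies $(v(G),v(G)) \ge -2$. Applying this to $F$, and using that $F$ is rigid (so $\Hom^1_X(F,F)=0$, whence $(v(F),v(F))<0$) together with the evenness of the Mukai form, forces $(v_0,v_0) = -2$ where $v_0 := v(F)$. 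Thus $v_0$ lies strictly inside the negative cone $\CC$, say $v_0 \in \CC^+$, and in particular the ray $\R_{>0}\,v_0$ avoids the null locus $\{v : (v,v)=0\}$ of the form restricted to $K(\A)\tensor\R$.

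Next I would set up coordinates on $K(\A)$, identified via $\A \subset \D(X)$ with the sublattice of $\N(X)$ spanned by $v_1=v(S_1)$ and $v_2=v(S_2)$. Every object of $\A$ has class $a v_1 + b v_2$ with $a,b\ge 0$, and the phase of such a class is $\phi(v) = \frac{1}{\pi}\arg Z(a v_1 + b v_2)$. If $Z(v_1)$ and $Z(v_2)$ are proportional over $\R$, then all objects of $\A$ share one phase, $\Theta$ is a single point and there is nothing to prove; so I may assume $Z(v_1),Z(v_2)$ span $\C$ over $\R$. Since both lie in the closed upper half plane, they span an angle less than $\pi$, and $\phi$ is a strictly monotonic, hence locally injective, function of the ratio $b/a$. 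Consequently the condition $|\phi(v) - \phi(F)| < \epsilon$ cuts out a closed angular sub-cone $K_\epsilon \subset K(\A)\tensor\R$ around the ray $\R_{>0}\,v_0$, with $K_\epsilon$ shrinking to that ray as $\epsilon \to 0$.

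The geometric heart of the argument is then immediate. Because $(v_0,v_0)=-2<0$, for $\epsilon$ small enough $K_\epsilon\setminus\{0\}$ lies entirely in $\{(v,v)<0\}$; on the compact slice $K_\epsilon \cap \{\|v\|=1\}$ the continuous function $-(v,v)$ is strictly positive, hence bounded below by some $c>0$, and by homogeneity $-(v,v)\ge c\|v\|^2$ for all $v\in K_\epsilon$. Any stable class $v$ with $|\phi(v)-\phi(F)|<\epsilon$ therefore satisfies both $v\in K_\epsilon$ and $(v,v)\ge -2$, forcing $\|v\|^2 \le 2/c$. As $K(\A)$ is discrete there are only finitely many such lattice vectors, hence only finitely many phases of stable objects in the $\epsilon$-neighbourhood of $\phi(F)$; shrinking $\epsilon$ below the gap to these finitely many other values shows $\phi(F)$ is isolated in $\Theta$, i.e.\ not an accumulation point. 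I expect the only real subtlety to be the bookkeeping of the third paragraph — checking that a phase-neighbourhood genuinely corresponds to an angular sub-cone avoiding the null directions, and isolating the degenerate proportional case — whereas the crucial structural input, that rigidity of $F$ keeps $\phi(F)$ off the null boundary along which $(-2)$-classes would otherwise accumulate, is what makes the finiteness work.
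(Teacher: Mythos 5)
Your proof is correct and rests on exactly the same mechanism as the paper's: rigidity forces $(v(F),v(F))=-2$, so the ray of $v(F)$ lies strictly inside the negative cone, while stability forces $(v,v)\ge -2$ for every stable class, and discreteness of the lattice $K(\A)\cong\Z^2$ then leaves only finitely many candidates near that ray. The paper runs this as a sequential argument (rescaling a convergent sequence $[v_n]\to[v(F)]$ and showing it is eventually constant) where you run it as a quantitative cone-and-compactness bound, but this is only a repackaging of the same idea.
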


\begin{proof}
 We can assume that $\Im Z(S_1)/Z(S_2)\neq 0$ since otherwise  $\Theta$ consists of a single point. Then $Z$ induces an isomorphism of real vector spaces $K(\A)\tensor \R\isom \C$, and we can think of $\Theta$ as a subset of
\[S^1=(K(\A)\tensor \R \setminus \{0\})/\R_{>0}.\] Suppose we have stable objects $F_n$ whose Mukai vectors $v_n\in K(\A)$  define points $[v_n]\in S^1$ which converge to the point $[v]$ defined by $v=v(F)$. This means that there are positive real numbers $\lambda_n$ such that $\lambda_n\cdot v_n \to v$. In particular, $\lambda_n^2 \cdot (v_n,v_n)\to (v,v)$. Since $F$ is rigid we have $(v,v)=-2$, so omitting finitely many terms of the sequence we can assume that $(v_n,v_n)<0$ for all $n$. But since the objects $F_n$ are stable this implies that $(v_n,v_n)=-2$ for all $n$. Therefore, the sequence $\lambda_n$ converges to 1, and hence the sequence $v_n$ converges to $v$. But since the vectors $v_n$ lie in the integral lattice this means that the sequence must be eventually constant.
\end{proof}

Let us now consider the abstract situation where $\A$ is a finite length abelian category with two simple objects $S_1$, $S_2$, and $Z\colon K(\A) \to \C$ is a stability function. We
note the following trivial statement.

\begin{lemma}
\label{wally}
Suppose that \[\Im Z(S_1)/Z(S_2)\neq 0.\]
 Then any $Z$-semistable object in $\A$ is automatically  $Z$-quasistable.
\end{lemma}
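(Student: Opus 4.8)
The plan is to exploit the hypothesis $\Im Z(S_1)/Z(S_2)\neq 0$ to turn the central charge into a linear isomorphism on $K(\A)\tensor\R$, and then to read off quasistability directly from the fact that the stable factors of a semistable object all share a common phase. The whole argument should be short, matching the assertion that the statement is trivial.

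First I would reinterpret the hypothesis. The condition $\Im Z(S_1)/Z(S_2)\neq 0$ says precisely that the complex numbers $Z(S_1)$ and $Z(S_2)$ are $\R$-linearly independent in $\C\isom\R^2$. Since the classes $[S_1],[S_2]$ form a $\Z$-basis of $K(\A)\isom\Z^2$ (the two simple objects), this means that the $\R$-linear extension $Z\colon K(\A)\tensor\R\to\C$ is an isomorphism of real vector spaces.

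Next I would take an arbitrary $Z$-semistable object $F$, of phase $\phi$ say, and let the $S_j$ denote its stable factors. By definition of semistability each $S_j$ is $Z$-stable of the same phase $\phi$, so all the central charges $Z(S_j)$ lie on the single ray $\R_{>0}\cdot e^{i\pi\phi}\subset\C$. Applying the inverse of the isomorphism $Z$ from the previous step, the classes $[S_j]\in K(\A)\tensor\R$ must then lie on a single ray as well. Under the inclusion $K(\A)\into\N(X)$ this is exactly the condition that the Mukai vectors of the stable factors of $F$ all lie on one ray $\R_{>0}\cdot v$, i.e.\ that $F$ is quasistable.

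I do not expect a genuine obstacle here; the only point requiring minor care is that ``same phase'' be read with a consistent choice of branch, but this is already part of the definition of a stability function, and once the map $Z$ is recognised as an $\R$-linear isomorphism the passage from ``collinear in $\C$'' to ``collinear in $K(\A)\tensor\R$'' is immediate.
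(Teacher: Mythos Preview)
Your argument is correct and is essentially identical to the paper's own proof: the paper also observes that the hypothesis makes $Z\colon K(\A)\tensor\R\to\C$ an isomorphism of real vector spaces, so that objects of the same phase have classes on a common ray in $K(\A)$. You have simply spelled out the details of this one-line observation.
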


\begin{proof}
Since $Z$ induces an isomorphism of real vector spaces $K(\A)\tensor \R \isom \C$, two objects have the same phase precisely if their classes lie on a ray in $K(\A)$.
\end{proof}

Later on we shall need the following more difficult  result.

\begin{lemma}
\label{hein}
Suppose  $F_1,F_2\in \A$ are $Z$-stable and satisfy
\[ \Hom_X(F_1,F_2)=0=\Hom_X(F_2,F_1).\]
Let $\Theta\subset (0,1]$ be the set of phases of stable objects of $\A$, and assume that at least one of the phases $\phi(F_i)$ is not an accumulation point of $\Theta$. Then, possibly after reordering the $F_i$, we have $F_i\isom S_i$.
\end{lemma}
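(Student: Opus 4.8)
The plan is to reduce the whole statement to the single assertion that whichever of $F_1,F_2$ has isolated phase in $\Theta$ must be one of the simples $S_1,S_2$; the other is then forced to be the complementary simple.

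First I dispose of the degenerate case $\Im\big(Z(S_1)/Z(S_2)\big)=0$: here every nonzero object of $\A$ has the same phase, so a $Z$-stable object can have no proper nonzero subobject and is therefore simple. Thus $F_1,F_2\in\{S_1,S_2\}$, and since $\Hom_X(F_1,F_2)=0$ forces $F_1\not\isom F_2$, we are done. So I may assume $\Im\big(Z(S_1)/Z(S_2)\big)\neq0$, whence $Z$ identifies $K(\A)\tensor\R$ with $\C$ and, by Lemma \ref{wally}, every $Z$-semistable object is quasistable. Labelling so that $\phi(S_1)<\phi(S_2)$, the phase of a class $a[S_1]+b[S_2]$ with $a,b\ge0$ varies monotonically; hence every stable object has phase in $[\phi(S_1),\phi(S_2)]$, and the two extreme phases are attained only by $S_1$ and $S_2$, since a length $\ge2$ object supported on a single simple is destabilised by that simple.

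Next I record two inputs. Serre duality gives $\dim_\C\Hom^2_X(F_1,F_2)=\dim_\C\Hom_X(F_2,F_1)=0$, and likewise with the roles of $F_1,F_2$ exchanged, so Riemann--Roch yields $\big(v(F_1),v(F_2)\big)=\dim_\C\Hom^1_X(F_1,F_2)\ge0$; moreover any stable object $F$ satisfies $\big(v(F),v(F)\big)\ge-2$, with equality precisely when $F$ is spherical. Second, a structural observation: a non-simple stable object $G$ has strictly interior phase, so its socle consists only of copies of $S_1$ and its head only of copies of $S_2$, giving $\Hom_X(S_1,G)\neq0$ and $\Hom_X(G,S_2)\neq0$. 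Writing $F$ for whichever of $F_1,F_2$ has isolated phase, this settles the case where $F$ is simple: if $F=S_2$ the partner cannot be non-simple (its head would map onto $S_2$) nor equal to $S_2$, so it is $S_1$; the case $F=S_1$ is symmetric. It therefore remains only to rule out that $F$ is non-simple, and this is where the non-accumulation hypothesis enters. Arguing exactly as in Lemma \ref{obs} --- nearby stable classes would be forced to satisfy $(v,v)=-2$ and then converge inside the integral lattice --- isolation of $\phi(F)$ forces $\big(v(F),v(F)\big)=-2$, so $v(F)\in\CC$. Comparing $v(F)$ with the partner's class by Lemma \ref{z}, if both lie in the same component $\CC^{\pm}$ then, being non-proportional, $\big(v(F_1),v(F_2)\big)<0$, contradicting the inequality $\ge0$ above.

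The main obstacle is the remaining configuration, in which $v(F_1)$ and $v(F_2)$ lie in \emph{opposite} components of $\CC$ --- for instance the class $2[S_1]+[S_2]$ tested against a partner of class $[S_1]+2[S_2]$ when $\big(v(S_1),v(S_2)\big)=2$. Here Lemma \ref{z} only returns $\big(v(F_1),v(F_2)\big)\ge0$, consistent with the hypotheses, so no contradiction can come from the lattice alone. I expect instead to exhibit an honest nonzero morphism between $F_1$ and $F_2$ factoring through a common semirigid factor lying on the null ray between $[S_1]$ and $[S_2]$: the head of the more-$S_1$ object should surject onto such a factor, which in turn injects into the socle side of the other object. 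Controlling the existence of this factor and of the resulting morphism, using the finite-length Jordan--Hölder structure together with Lemma \ref{reduce}, and dealing with partners $F_i$ that are merely semirigid rather than spherical, is the delicate heart of the argument.
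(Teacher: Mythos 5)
Your proposal is not a complete proof, and the two places where it breaks down are both essential. First, the step ``isolation of $\phi(F)$ forces $\bigl(v(F),v(F)\bigr)=-2$'' is the converse of Lemma \ref{obs}, not its contrapositive, and the argument of Lemma \ref{obs} does not reverse: that proof uses $\lambda_n^2(v_n,v_n)\to(v,v)=-2<0$ to force nearby stable classes to be $(-2)$-classes, whereas if $(v,v)\geq 0$ nothing whatever is forced on nearby classes, and isolation of the phase gives you no sequence to work with in the first place. Ruling out an isolated stable class with $(v,v)\geq 0$ would require producing stable objects of accumulating phase, i.e.\ a nontrivial existence statement, which you do not supply. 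Second, and more seriously, the ``opposite components'' configuration that you defer is not a corner case but the main case: in the intended application $v(S_1)$ and $v(S_2)$ themselves lie in opposite components of $\CC$, so effective $(-2)$-classes such as $2v(S_1)+v(S_2)$ and $v(S_1)+2v(S_2)$ (when $\bigl(v(S_1),v(S_2)\bigr)=2$) satisfy every numerical constraint you have available --- the Mukai pairing genuinely cannot distinguish this pair from $\{S_1,S_2\}$. Your sketch of a fix (a morphism factoring through a common semirigid factor ``on the null ray'') is exactly the part that would constitute the proof, and it is not carried out. A further, smaller issue: the lemma does not assume the $F_i$ are (semi)rigid, so your appeals to Lemma \ref{z} presuppose $v(F_i)\in\CC$, which you have not established for the partner object.

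For comparison, the paper's proof is purely categorical and sidesteps the lattice entirely. One passes to the induced stability condition on $\D^b(\A)$ and tilts to the heart $\CC=\P([\phi_1,\phi_1+1))$, where $F_1$ (the object of smaller phase) is simple. Non-accumulation of $\phi_1$ in $\Theta$ shows $\CC=\P([\phi_1,\phi_1+1-\epsilon])$, so central charges of $\CC$ lie in a strictly convex sector of a rank-two lattice; hence $\CC$ is finite length with exactly one further simple $T$, and $\phi(F_1)\leq\phi(F_2)\leq\phi(T)$. Since $\Hom(F_1,F_2)=0$, the object $F_1$ is not a subobject of $F_2$ in $\CC$, so $T$ is, and stability of $F_2$ forces $F_2\cong T$; then $\CC\subset\A$ are both hearts, hence equal, and $\{F_1,F_2\}$ is the set of simples of $\A$. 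If you want to salvage your approach, this tilting argument is the ``delicate heart'' you were looking for.
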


\begin{proof}
The pair $(Z,\A)$ induces a stability condition $(Z,\P)$ on the bounded derived category $\D=\D^b(\A)$ in the usual way. Set $\phi_i=\phi(F_i)\in (0,1]$, and reorder the objects $F_i$ so that $\phi_1\leq \phi_2$.
We  treat first the case when $\phi_1$ is not an accumulation point of $\Theta$.

Consider the heart $\CC=\P([\phi_1, \phi_1+1))\subset \D$. 
Note that   $F_1,F_2\in \CC$, and $F_1$ is a simple object of $\CC$. The assumption that $\phi_1$ is not an accumulation point of $\Theta$ implies that  $\CC=\P([\phi_1,\phi_1+1-\epsilon])$ for some $\epsilon>0$.
Thus, the central charges $Z(E)$ for $E \in \CC$ are contained in a strictly convex sector of the
complex plane. Also note that $K(\CC)=K(\A) = \Z^{\oplus 2}$, and thus the set of central charges
$Z(E)$ for $E \in \CC$ is discrete in this sector. It follows that $\CC$ is of finite length, and
since $K(\CC)$ has rank 2 that there is exactly one other simple object in $\CC$ up to isomorphism, say $T$.
The effective cone in $K(\CC)$ is then generated by the classes of the simple objects $F_1$ and $T$,   and it follows that  $\phi(F_1)\leq \phi(F_2)\leq \phi(T)$. 

 The assumption  $\Hom_\A(F_1,F_2)=0$  shows that  $F_1$ is not a subobject of $F_2$ in $\CC$. It
follows that $T$ is a subobject of $F_2$, and in particular  there is a nonzero map $T\to F_2$. But
since $F_2$ is $Z$-stable this is only possible if   $F_2=T$. Then $\CC=\P([\phi_1,\phi_2])$ is a
subcategory of $\A$. But since $\A$ and $\CC$ are both hearts in $\D$, this implies that $\A=\CC$,
and therefore   $F_1$ and $F_2$ are the two simple objects of  $\A$ up to isomorphism.

If we instead assume that $\phi_2$ is not an accumulation point of $\Theta$, then we can consider
the finite length heart  $\CC=\P((\phi_2-1,\phi_2])\subset \D$ in which  $F_2$ is simple, and apply a similar argument.
\end{proof}


\subsection*{Width function}
Recall our fixed (semi)rigid object $E\in \D(X)$. Given a stability condition $\sigma \in \Stab(X)$, 
we define the \emph{width} of $E$ by
\[w_E(\sigma)=\phi_\sigma^+(E)-\phi_\sigma^-(E)\in \R_{\geq 0},\]
which we view as a continuous function
\[w\colon \Stab(X)\to \R_{\geq 0}.\]
It is evidently invariant under the $\C$-action.

We denote by $E_\pm = E_\pm(\sigma)$ the HN factors of $E$ with maximal and minimal phase $\phi_\pm$. We denote by
$n=\lfloor w_E(\sigma)\rfloor\geq 0$, and define $A_\pm$ by $A_+=E_+$ and $A_-=E_-[n]$. Note  that $A_\pm$ are semistable and 
\[0\leq \phi(A_+)-\phi(A_-) <1.\]
We shall repeatedly use the following result.

\begin{lemma}
\label{x}
Assume that $w_E(\sigma)>0$. Then
\begin{itemize}
\item[(a)]
the objects $A_\pm$ are both either rigid or semirigid, and at most one of them is semirigid, and\smallskip
\item[(b)]
$\Hom^i_X(A_-,A_+)=0$ unless $i\in \{1,2\}$;
 \end{itemize}
if we assume in addition that $w_E(\sigma)\notin \Z$, we also have
\begin{itemize}
\item[(c)] $\Hom^i_X(A_-,A_+)=0$ unless $i=1$, and
\smallskip
\item[(d)] $(v(A_+),v(A_-))>0$. 
\end{itemize}
\end{lemma}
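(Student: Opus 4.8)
The plan is to prove (a) by propagating (semi)rigidity down the Harder--Narasimhan filtration, and to prove (b)--(d) by combining the elementary slicing inequalities for semistable objects with the hypothesis $\Hom_X(E,E)=\C$. Throughout I would write the HN filtration of $E$ with $m$ semistable factors $A_1,\dots,A_m$ of strictly decreasing phase, so that $A_+=A_1$ and $A_-=A_m[n]$, and I set $\psi=\phi(A_+)-\phi(A_-)=w-n\in[0,1)$.

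For (a), note that the factors of $E_{i-1}$ all have phase strictly larger than $\phi(A_i)$, so $\Hom_X(E_{i-1},A_i)=0$; applying Lemma \ref{stan} to each triangle $E_{i-1}\to E_i\to A_i$ and inducting gives $\dim_\C\Hom^1_X(E,E)\ge\sum_{i=1}^m\dim_\C\Hom^1_X(A_i,A_i)$, and the left side is $\le 2$ since $E$ is (semi)rigid. The extra input I would use is a parity observation: each $A_i$ is semistable, hence $\Hom^{<0}_X(A_i,A_i)=0$ and, by Serre duality, $\Hom^{>2}_X(A_i,A_i)=0$ with $\dim\Hom^2=\dim\Hom^0$, so Riemann--Roch yields $\dim_\C\Hom^1_X(A_i,A_i)=2\dim_\C\Hom^0_X(A_i,A_i)+(v(A_i),v(A_i))$, an even number because the Mukai form is even. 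Hence each summand lies in $\{0,2\}$ and at most one equals $2$; since self-$\Ext$ groups are shift-invariant, it follows that $A_+$ and $A_-$ are rigid or semirigid with at most one semirigid, proving (a).

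For (b) and (c), the slicing inequalities for the semistable objects $A_\pm$ at once bound $\Hom^i_X(A_-,A_+)$: nonvanishing forces $i\ge-\psi$ and, via Serre duality, $i\le 2-\psi$, giving $i\in\{0,1,2\}$ in general and $i\in\{0,1\}$ once $\psi>0$. The only remaining task, and the main obstacle, is to kill $i=0$, that is to show $\Hom_X(A_-,A_+)=\Hom^{-n}_X(E_-,E_+)=0$; the phase bounds alone permit such maps, so the hypothesis $\Hom_X(E,E)=\C$ must genuinely enter. Writing $B=E_+[-n]$, $p\colon E\to E_-$ for the bottom quotient, $i\colon E_+\to E$ for the top inclusion, and $E'$ for the cone of $i$, I would show the two maps
\[
\Hom_X(E_-,B)\xrightarrow{\,g\,\mapsto\, g\circ p\,}\Hom_X(E,B)\xrightarrow{\,h\,\mapsto\, i[-n]\circ h\,}\Hom^{-n}_X(E,E)
\]
are both injective; by the long exact sequences the relevant obstruction groups are $\Hom^{-1}_X(E_{m-1},B)$ and $\Hom^{-n-1}_X(E,E')$, and a dévissage shows both vanish because every $\Hom$ between HN factors occurring in them requires a phase gap $\ge n+1$, impossible since the total gap is $w<n+1$. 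Given the resulting injection $\Hom^{-n}_X(E_-,E_+)\hookrightarrow\Hom^{-n}_X(E,E)$, the case $n\ge1$ is finished by rigidity ($\Hom^{<0}_X(E,E)=0$); in the case $n=0$ the image sits in $\Hom_X(E,E)=\C\cdot\id$ and consists of endomorphisms factoring through $i\colon E_+\to E$, which cannot be isomorphisms (else $E$ would be a direct summand of the semistable $E_+$, contradicting $w>0$) and hence vanish. This gives (b), and (c) is then immediate from the sharper bound $i\in\{0,1\}$ when $w\notin\Z$.

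For (d), assume $w\notin\Z$, so that by (c) the complex $\RHom_X(A_-,A_+)$ is concentrated in degree $1$. Riemann--Roch then gives
\[
(v(A_+),v(A_-))=-\chi(A_-,A_+)=\dim_\C\Hom^1_X(A_-,A_+)\ge0,
\]
and it remains only to exclude equality. If $(v(A_+),v(A_-))=0$, then since $v(A_\pm)$ lie in the cone $\CC$, Lemma \ref{z} forces $v(A_+)$ and $v(A_-)$ to be proportional and isotropic; but an isotropic (semi)rigid class is necessarily semirigid, so both $A_+$ and $A_-$ would be semirigid, contradicting (a). Hence $(v(A_+),v(A_-))>0$.
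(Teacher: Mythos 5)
Your proof is correct, and for parts (a), (c) and (d) it follows the paper essentially verbatim: Lemma \ref{stan} applied along the HN filtration for (a) (your parity observation via Riemann--Roch, ruling out $\dim\Hom^1(A_i,A_i)=1$, is a detail the paper leaves implicit), the slicing/Serre-duality bounds for (c), and Riemann--Roch plus Lemma \ref{z} plus part (a) for (d). The only genuine divergence is in the key step of (b), killing $\Hom_X(A_-,A_+)$. The paper works with cohomology objects $H^i_\A(E)$ in the heart $\A=\P((\phi_+-1,\phi_+])$ and uses the spectral sequence $\bigoplus_i\Hom^p(H^i_\A(E),H^{i+q}_\A(E))\Rightarrow\Hom^{p+q}(E,E)$ to promote a nonzero $f\colon A_-\to A_+$ to a nonzero map $E\to E[-n]$; you instead build that same map explicitly as $i[-n]\circ f'\circ p$ and verify it is nonzero by checking that the two obstruction groups $\Hom^{-1}_X(E_{m-1},B)$ and $\Hom^{-n-1}_X(E,E')$ vanish on phase grounds (both vanishings are correct, since the relevant phase gaps are bounded by $w<n+1$). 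Your route is more elementary --- it avoids both the heart $\A$ and the spectral sequence, and in particular sidesteps the (implicit) check that the relevant class survives to the $E_\infty$-page --- at the cost of a slightly longer explicit dévissage; the treatment of the $n=0$ case via $\Hom_X(E,E)=\C$ and the resulting splitting of $E$ off the semistable object $E_+$ is the same in both arguments.
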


\begin{proof}
Applying Lemma \ref{stan} repeatedly to the HN filtration of $E$ gives  (a). 
The objects $A_\pm$ lie in the heart $\A=\P((\phi_+-1,\phi_+])$ on $\D(X)$ and hence, using Serre duality,  satisfy  \begin{equation}
\label{apm}
\Hom_X^i(A_\pm,A_\pm)=0\text{ unless } i\in\{0,1,2\}.\end{equation}
 For (b) we must show that $\Hom_X(A_-,A_+)=0$. Note that taking cohomology with respect to the above heart $\A$ we have $H^i_\A(E)=0$ unless $0\leq i \leq n$. Moreover, there is an epimorphism $H_\A^{n}(E)\to A_-$ and a monomorphism $A_+\to H_\A^0(E)$. Suppose there is a nonzero map $f\colon A_-\to A_+$. Using the spectral sequence
\begin{equation}
\label{spec}
E_2^{p,q}=\bigoplus_{i\in \Z}\Hom^p_X(H^i_\A(E),H_\A^{i+q}(E)) \implies \Hom^{p+q}_X(E,E)\end{equation}
it follows that there is a nonzero map $E\to E[-n]$ which if $n>0$ contradicts  the fact that $E$ is semirigid. In the case $n=0$ we have that $E\in \A$ and an epimorphism $g\colon E\to A_-$ and a monomorphism $h\colon A_+\to E$. Then $h\circ f\circ g$ is a nonzero map $E\to E$ which by the assumption $\Hom_X(E,E)=\C$ must be a multiple of the identity. It follows that $E\isom A_+\isom A_-$ which contradicts the assumption that  $w_E(\sigma)>0$.

For (c), note first that since $\phi(A_+)>\phi(A_-)$, and since the objects $A_\pm$ are semistable, there are no nonzero maps $A_+\to A_-$; then apply Serre duality.
The inequality of part (d) then follows by Riemann-Roch. Equality   is impossible, by Lemma \ref{z}, since at least one of $A_\pm$ is rigid.
\end{proof}


\subsection*{$(\pm)$-walls}

We say that a stability condition $\sigma\in \Stab(X)$  is $(+)$-generic if $E_+$  is quasistable, and similarly $(-)$-generic if $E_-$ is quasistable.

\begin{lemma}
\label{wall1}
The subset of $(+)$-generic stability conditions is  the complement of a real, closed  submanifold
$W_+\subset \Stab(X)$ of codimension 1. The object $E_+$ is locally constant on $W_+$, as well as on
the complement of $W_+$. Similarly for $(-)$-generic stability conditions and $E_-$.
\end{lemma}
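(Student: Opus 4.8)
The plan is to analyze $W_+$ locally near a point $\sigma_0=(Z_0,\P_0)$ at which $E_+=E_+(\sigma_0)$ fails to be quasistable. By Proposition~\ref{cases}, this happens exactly when $E_+$ falls into case (c): it has precisely two stable factors $S_1,S_2$ of common phase $\phi_+$, whose Mukai vectors $v_i=v(S_i)$ are linearly independent in $\N(X)$. First I would record that the $(+)$-generic locus is open: if $E_+(\sigma_0)$ is quasistable (cases (a),(b)) it has a single stable factor up to isomorphism, so no wall can form within its phase and Proposition~\ref{open} keeps it quasistable and of maximal phase for nearby $\sigma$. This shows $W_+$ is closed and that $E_+$ is locally constant off $W_+$.

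For the local model on $W_+$ I would invoke Lemma~\ref{reduce} with $\A\subset\P_0(\phi_+)$ the finite length subcategory generated by $S_1,S_2$, so that stability of objects of $\A$ near $\sigma_0$ is governed by the stability function $Z$ obtained by restricting the central charge. Since phases vary continuously, for $\sigma$ near $\sigma_0$ the maximal HN factor $E_+(\sigma)$ of $E$ equals the top factor, with respect to $Z$, of the object $E_+(\sigma_0)\in\A$. The wall $W_+$ is then exactly the locus where $Z(S_1)$ and $Z(S_2)$ lie on a common ray, i.e. the zero set of
\[
g(\sigma)=\Im\big((\Omega,v_1)\,\overline{(\Omega,v_2)}\big),
\]
where $\Omega\in\N(X)\tensor\C$ is the vector associated to $\sigma$.

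To see that this is a smooth real hypersurface I would use that the forgetful map $\Stab(X)\to\N(X)\tensor\C$ is a local homeomorphism to treat $\Omega$ as a local coordinate. Because $v_1,v_2$ are linearly independent and the Mukai form is non-degenerate, the $\C$-linear map $\Omega\mapsto\big((\Omega,v_1),(\Omega,v_2)\big)$ from $\N(X)\tensor\C$ onto $\C^2$ is a submersion; since $Z_0(S_i)\neq0$, the function $(w_1,w_2)\mapsto\Im(w_1\overline{w_2})$ has nonvanishing differential near $(Z_0(S_1),Z_0(S_2))$, and pulling back along a submersion preserves this. Hence $g$ has nonvanishing differential along $\{g=0\}$, so $W_+$ is locally a closed codimension-one submanifold. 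These local charts are consistent because $E_+$, and with it the unordered pair $\{S_1,S_2\}$, is locally constant along $W_+$, which is the next point.

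Finally, for local constancy: on $W_+$ the charges $Z(S_1),Z(S_2)$ stay proportional, so every object of $\A$ of phase $\phi_+$ remains semistable; thus $E_+(\sigma_0)$ stays semistable and maximal, giving $E_+(\sigma)=E_+(\sigma_0)$ on the wall. On either side of $W_+$ we have $\Im Z(S_1)/Z(S_2)\neq0$, so Lemma~\ref{wally} forces $E_+(\sigma)$, the $Z$-top factor of $E_+(\sigma_0)$, to be quasistable; on each side this top factor is a fixed object (no further wall occurs in $\A$, since the phases of $S_1,S_2$ stay strictly ordered), and Proposition~\ref{open} gives local constancy. The argument for $W_-$ and $E_-$ is identical after passing to minimal phases. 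The main obstacle I anticipate is the bookkeeping needed to identify $E_+(\sigma)$ with the $Z$-top factor of $E_+(\sigma_0)$ uniformly in a neighbourhood, and to verify that this top factor does not itself wall-cross on a given side; both rest on continuity of phases together with Lemma~\ref{reduce}, but must be set up carefully so that the local submanifold charts glue into a single closed submanifold.
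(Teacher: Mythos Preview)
Your proposal is correct and follows essentially the same route as the paper: open-ness of the $(+)$-generic locus via Proposition~\ref{open}, then at a non-generic $\sigma_0$ invoking Proposition~\ref{cases} to get the two stable factors $S_1,S_2$, applying Lemma~\ref{reduce} to reduce to stability in $\A$, and identifying $W_+$ locally with the real locus $\Im Z(S_1)/Z(S_2)=0$ (your function $g$ is equivalent). You supply more detail than the paper in two places---the submersion argument for why the equation cuts out a smooth codimension-one hypersurface, and the observation that local constancy of $\{S_1,S_2\}$ along $W_+$ makes the local charts glue---both of which the paper leaves implicit.
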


\begin{proof}The first claim is that being $(+)$-generic is an open condition. Indeed,  the first step in the HN filtration of $E$  in a stability condition $\sigma_0=(Z_0,\P_0)$ is a triangle \begin{equation}
\label{tri}E_+\to E\to F\end{equation}
with $E_+\in \P_0(\phi_+)$ and $F\in \P_0(<\phi_+)$. If $E_+$ is moreover quasistable,
Prop.~\ref{open} shows that $E_+$ remains semistable in a neighbourhood of $\sigma_0$. It follows
that \eqref{tri} remains the first step in the HN filtration of $E$, so that  the object $E_+$ is
locally constant, and the claim follows then from Prop.~\ref{open}. 

 Suppose now that $\sigma_0\in \Stab(X)$ is not $(+)$-generic. By Prop.~\ref{cases} there are then
exactly two stable factors $S_1,S_2$ of $E_+$, whose Mukai vectors $v(S_i)$ are linearly independent. These objects generate a Serre subcategory $\A\subset \P_0(\phi_+)$ as in Section 2. Lemma \ref{reduce} and the argument above show that for stability conditions $\sigma=(Z,\P)$ in a neighbourhood of $\sigma_0$, the maximal HN factor of $E$  in $\sigma$ is  precisely the maximal HN factor of the object $E_+\in \A$ with respect to the  stability function $Z$. The locus of non $(+)$-generic stability conditions is therefore  the set of points satisfying
\begin{equation}
\label{bl}
\Im Z(S_1)/Z(S_2) =0.\end{equation}
Indeed, when this condition is satisfied, $E_+$  itself is semistable but not quasistable. On the
other hand, when \eqref{bl} does not hold,  Lemma \ref{wally}  shows that the maximal HN factor is automatically quasistable.
\end{proof}

We refer to the connected components of the submanifold $W_+$ as $(+)$-walls.   Similarly for  $(-)$-walls.
A connected component of the complement
\begin{equation}\label{comp}\Stab(X)\setminus (W_+\cup W_-)\end{equation}
will be called a chamber. 
Lemma \ref{wall1} shows that both objects $E_\pm$ are constant on every chamber.
In particular,  the function $w_E$ is smooth on every chamber. In fact, it is also smooth on the closure of
each chamber: as $\sigma$ reaches a $(+)$-wall, the object $E_+$ may change, but the
phases $\phi(E_+)$ of the two objects agree on the wall; thus $\phi(E_+)$ extends to a smooth function on the closure
of the chamber.

\begin{remarks}
\begin{itemize}
\item[(a)]The union
of the submanifolds $W_+$ and $W_-$ need not be a submanifold since $(+)$ and $(-)$ walls can intersect each other. 

\item[(b)]
The object $A_-$ need not be  locally-constant on the complement \eqref{comp} since its definition involves the function $\lfloor w_E\rfloor$ which is discontinuous at points of $w^{-1}(\Z)$.
\end{itemize}
\end{remarks}


\subsection*{Integral walls}

Consider now the subset
\[W_\Z=\{\sigma\in \Stab(X)\colon w_E(\sigma)\in \Z \text{ and $E$ is not $\sigma$-quasistable}\}.\]
We call the connected components of $W_\Z$ integral walls.

\begin{remark}
The condition that $E$ is not $\sigma$-quasistable is essential for the following result to hold.
For example, when $E=\O_x$ is a skyscraper sheaf there is an open region $U$ where $E$ is stable
(this coincides with the subset of geometric stability conditions, see Lemma \ref{ble}): we do not
want $W_\Z$ to contain the closure of  this subset, but rather its boundary  (see also Prop.~\ref{zerocase} below). 
\end{remark}

\begin{lemma}\label{wall2}The subset $W_\Z$ is a real, closed  submanifold of $\Stab(X)$ of codimension 1.
\end{lemma}

\begin{proof}
Since $w$ is continuous, the subset $w^{-1}(\Z)$ is closed. The fact that $W_\Z$ is closed then
follows from Prop.~\ref{open}. If $\sigma\in \Stab(X)$ satisfies  $w_E(\sigma)=0$ then $\sigma$ lies on $W_\Z$ precisely if $\sigma$ is not $(\pm)$-generic, so for these points the result follows from Lemma \ref{wall1}. 
Thus we can work in a neighbourhood of a point $\sigma_0\in W_\Z$ for which  $w_E(\sigma_0)> 0$.

Consider the stable factors $S_i$ of $A_\pm$. By Lemma \ref{stab} they are all
(semi)rigid, and at most one is semirigid. Lemma \ref{ynew} shows that there at most two of them. But if there is only one  then there is a nonzero map $A_-\to A_+$ contradicting Lemma \ref{x} (b). Hence the objects $A_\pm$ have exactly 2 stable factors $S_1,S_2$ between them, and by Lemma \ref{obv} the  Mukai vectors $v(S_i)$ are linearly independent.

We claim that in a neighbourhood of $\sigma_0$ the closed subset $W_\Z$  is cut out by the equation
\begin{equation}
\label{again}\Im Z(S_1)/Z(S_2) =0.\end{equation}
Certainly, if this condition is satisfied, the stable factors of $A_{\pm}$ remain stable (by Prop. ~\ref{open}) and of equal phases; hence the objects $E_\pm$ remain semistable, and continue to be the extremal HN factors of $E$. Thus $w_E(\sigma)$ remains integral.

 For the converse, we apply Lemma \ref{reduce} to conclude that for any $\sigma=(Z,\P)$  in a neighbourhood of $\sigma_0$, the extremal HN factors of $E$ (up to shift) are just given by the extremal HN factors of the objects $A_\pm\in \A$ with respect to the stability function $Z$. Call these objects $C_\pm$ and suppose that $\phi(C_+)=\phi(C_-)$. By Lemma \ref{obv} the Mukai vectors of the distinct stable factors of  $C_\pm$  are linearly independent in $\N(X)$. There must be more than one of them by Lemma \ref{x}(b). Since $Z$ maps these different stable factors onto a ray, condition \eqref{again} must hold.
\end{proof}

\begin{remarks} \label{rem:pmwallsintersect}
\begin{itemize}
\item[(a)]
It follows from the local descriptions given in Lemmas \ref{wall1} and \ref{wall2} that if a $(\pm)$-wall $W_1$ intersects an integral wall  $W_2$ then in fact $W_1=W_2$ is simultaneously a $(+)$-wall, a $(-)$-wall, and an integral wall.\smallskip

\item[(b)]
It is easy to check that if a $(+)$-wall  and
a $(-)$-wall   coincide then this wall is also an integral wall. 

\item[(c)] The statements of Lemmas \ref{wall1} and \ref{wall2} continue to hold if we replace the manifold $\Stab(X)$ by the complex submanifold $\Stab^*_\red(X)$ of reduced stability conditions. The point is that the walls, which are cut-out by equations of the form $(\Omega,v_1)/(\Omega,v_2)\in \R$, intersect the complex submanifold given by $(\Omega,\Omega)=0$ transversely. 
\end{itemize}
\end{remarks}

\subsection*{No local minima}

Using the action of the universal cover of  $\operatorname{GL^+_2}(\R)$ on $\Stab(X)$, it is easy to
see  that if the width function $w_E$ had a local minimum  $ \sigma_0\in \Stab(X)$, then this would have to satisfy $w_E({\sigma_0})\in \Z$. The following crucial result then shows that in fact the  function $w_E$ has no positive local minima on $\Stab(X)$.

\begin{prop} \label{lem:integerwalls}
Let $\sigma_0\in W_\Z\subset \Stab(X)$  satisfy $w_E(\sigma_0)=n\in \Z_{>0}$. Locally near $\sigma_0$ the
submanifold $W_\Z$ splits $\Stab(X)$ into two connected components, with
$w_E(\sigma)<n$ in one component and $w_E(\sigma)>n$ in the other. 
\end{prop}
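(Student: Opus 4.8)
The plan is to work locally near $\sigma_0$ inside the finite-length abelian subcategory $\A$ generated by the two stable factors $S_1,S_2$ of $A_\pm$, exactly as in the proof of Lemma~\ref{wall2}. There I would record the local picture: $W_\Z=\{\Im Z(S_1)/Z(S_2)=0\}$, and, by the reduction via Lemma~\ref{reduce}, the extremal HN factors of $E$ are computed from those of $A_\pm$ inside $\A$, so that
\[
w_E(\sigma)=n+g(\sigma),\qquad g(\sigma)=\phi^+_Z(A_+)-\phi^-_Z(A_-),
\]
where $\phi^\pm_Z$ denotes the maximal/minimal phase occurring in the $Z$-HN filtration inside $\A$. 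At $\sigma_0$ both $A_\pm$ are semistable of the common phase $\phi$, so $g(\sigma_0)=0$, and the statement becomes: $g$ is strictly negative on one local side of $W_\Z$ and strictly positive on the other. I would also record the structural input: $A_\pm$ are (semi)rigid with at most one semirigid (Lemma~\ref{x}(a)), $\Hom_X(A_-,A_+)=0$ (Lemma~\ref{x}(b)), and $v(A_+),v(A_-)$ are linearly independent — the last because proportional classes would force $(v(A_\pm),v(A_\pm))=0$ by Lemma~\ref{z}, making both $A_\pm$ semirigid.

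First I would show $g\neq0$ off the wall. Let $T^+$ be a stable factor of $A_+$ of maximal phase; it is a subobject $T^+\hookrightarrow A_+$. Let $T^-$ be a stable factor of $A_-$ of minimal phase; it is a quotient $A_-\twoheadrightarrow T^-$. If $T^+\cong T^-$ the composite $A_-\twoheadrightarrow T^-\cong T^+\hookrightarrow A_+$ would be a nonzero element of $\Hom_X(A_-,A_+)$, which is impossible; hence $T^+\not\cong T^-$. By Lemma~\ref{stab} at most one stable factor of $E$ is semirigid, so at least one of $T^\pm$ is rigid, and its phase is not an accumulation point of the phase set of $\A$ by Lemma~\ref{obs}. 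Now if we had $\phi(T^+)=\phi(T^-)$, then $\Hom_X(T^+,T^-)=\Hom_X(T^-,T^+)=0$ (distinct stable objects of equal phase), and Lemma~\ref{hein} would force $\{T^+,T^-\}=\{S_1,S_2\}$; but off the wall $\phi(S_1)\neq\phi(S_2)$, a contradiction. Therefore $g=\phi(T^+)-\phi(T^-)\neq0$ away from $W_\Z$, which shows that $W_\Z=g^{-1}(0)$ locally and that $g$ has a constant, nonzero sign on each of the two components of the complement.

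To pin down the two signs I would introduce the smooth auxiliary function $h(\sigma)=\phi_Z(A_+)-\phi_Z(A_-)$, the difference of the phases of the (fixed) central charges. Since $v(A_+),v(A_-)$ are independent, a direct computation gives $h=0$ exactly when $\Im Z(S_1)/Z(S_2)=0$, i.e. $\{h=0\}=W_\Z$ locally, and $h$ has nonzero differential there, so it changes sign across $W_\Z$. Because the total phase of an object lies between its extremal HN phases, $\phi^+_Z(A_+)\ge\phi_Z(A_+)$ and $\phi^-_Z(A_-)\le\phi_Z(A_-)$, whence $g\ge h$ everywhere. On the component where $h>0$ this already yields $g\ge h>0$, i.e. $w_E>n$. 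On the component where $h<0$ I would instead argue that $A_+$ and $A_-$ are both $Z$-semistable, so that their HN filtrations are trivial and $g=h<0$, giving $w_E<n$. Concretely, $\Hom_X(A_-,A_+)=0$ forces the socle of $A_+$ and the head of $A_-$ to involve disjoint simples, so after relabelling $\operatorname{socle}(A_+)=\{S_1\}$ and $\operatorname{head}(A_-)=\{S_2\}$; on the side where $\phi(S_1)<\phi(S_2)$ the socle of $A_+$ then sits at the minimal phase and the head of $A_-$ at the maximal phase, which is the configuration in which no subobject can destabilise $A_+$ and no quotient can destabilise $A_-$.

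The main obstacle is precisely this last semistability claim — equivalently, the assertion that the $h<0$ side is the side on which $w_E<n$. The difficulty is that the socle condition alone does not prevent a subobject of $A_+$ from carrying enough $S_2$-content over its $S_1$-socle to raise its phase above $\phi_Z(A_+)$: a formal iterated extension such as $A_+=[\,S_1\mid S_2^{\oplus k}\,]$ with simple socle $S_1$ would be $Z$-semistable on \emph{both} sides and would make $\sigma_0$ a genuine local minimum of $w_E$. The content of the step, which I expect to be the heart of the argument, is that every such dangerous configuration in fact has $\Hom_X(A_-,A_+)\neq0$: a short computation with the defining extensions of $A_\pm$, using that at least one of the $S_i$ is spherical and hence has no self-extensions, shows that the relevant connecting map vanishes, leaving $\Hom_X(A_-,A_+)\cong\Hom_X(S_1,A_+)^{\oplus k}\neq0$, and the configuration is therefore excluded by Lemma~\ref{x}(b). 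Carrying out this exclusion uniformly — rather than example by example — is the one place where I expect to need the full force of the (semi)rigidity hypotheses and the Hom-vanishing together.
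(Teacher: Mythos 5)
Your setup, your reduction to the stability function $Z$ on the category $\A$ generated by $S_1,S_2$, and your treatment of one side of the wall are sound: the inequality $g\ge h$ together with $g\geq h>0$ cleanly gives $w_E>n$ on one component, and your use of Lemma~\ref{hein} to show $g\neq 0$ off the wall is essentially the right mechanism. But the crucial half of the statement --- that $w_E<n$ on the \emph{other} component, i.e.\ that $\sigma_0$ is not a local minimum --- is exactly the step you leave open, and your proposed route to it does not work as stated. You aim to prove that $A_+$ and $A_-$ are both $Z$-semistable on the $h<0$ side, deducing this from ``$\operatorname{socle}(A_+)=\{S_1\}$ sits at the minimal phase.'' The socle condition does not imply semistability (a subobject with class $a[S_1]+b[S_2]$, $a\ge 1$, can still have larger phase than $A_+$ if its $S_2$-content is proportionally higher), you acknowledge this, and your plan to exclude all ``dangerous configurations'' by showing $\Hom_X(A_-,A_+)\neq 0$ is verified only on a single (and, as analysed, not obviously dangerous) example. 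Moreover the semistability of $A_\pm$ is strictly stronger than what the proposition needs, so you have made the hard step harder.

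The paper closes this gap without ever proving semistability of $A_\pm$, and it does so with precisely the tools you already deployed in your first step. Suppose for contradiction that the width also increases on the $\phi(S_1)<\phi(S_2)$ side. Then the new extremal HN factors $C_\pm$ of $A_\pm$ satisfy $\phi(C_+)>\phi(C_-)$, so by Lemma~\ref{wally} and Proposition~\ref{cases} each $C_\pm$ is quasistable with a single stable factor $T_\pm$; Lemma~\ref{x} gives $\Hom_\A(T_+,T_-)=0=\Hom_\A(T_-,T_+)$, and Lemmas~\ref{obs} and~\ref{hein} force $\{T_+,T_-\}=\{S_1,S_2\}$. The phase ordering then pins down $T_+=S_2$, producing a chain of inclusions $S_2\subset C_+\subset A_+$ that contradicts $\Hom_X(S_2,A_+)=0$ (which you derived from the socle/head analysis). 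In other words: rather than identifying the HN filtrations of $A_\pm$ on the good side, one only needs to rule out the bad configuration on the remaining side, and Lemma~\ref{hein} applied to the hypothetical new extremal factors does exactly that. A secondary issue: your one-line justification that $v(A_+),v(A_-)$ are linearly independent (needed for $\{h=0\}=W_\Z$ and the transversality of $h$) is not complete --- Lemma~\ref{x}(d) is only available for non-integral width --- whereas the paper works throughout with the linearly independent classes $v(S_1),v(S_2)$ supplied by Lemma~\ref{obv}.
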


\begin{proof}
As in the proof of Lemma \ref{wall2}, the objects $A_\pm$  have two stable factors $S_1,S_2$ between
them; we again consider the finite length abelian subcategory $\A\subset \P_0(\phi)$ generated by
$S_1$ and $S_2$. Considering the Jordan-H{\"o}lder filtration of $A_+$ and relabelling the objects
$S_i$ if necessary, we can assume that there is a monomorphism  $S_1\to A_+$ in  $\A$. It follows from Lemma
\ref{x}(b) that $\Hom_X(A_-,S_1)=0$, and hence  there must  be an epimorphism  $A_-\to S_2$ in $\A$. Using Lemma \ref{x}(b) again,
this in turn implies that $\Hom_X(S_2, A_+) = 0$.

Locally near $\sigma_0$ the submanifold $W_\Z$  is cut out by the equation $Z(S_1)/Z(S_2)\in \R_{>0}$.
Note that there are no other relevant walls near $\sigma_0$, since any $(\pm)$-wall which intersects
$W_\Z$ coincides with a component of $W_\Z$. Hence it makes sense to speak of the new objects
$A_\pm$ for stability conditions $\sigma=(Z,\P)$ on either side of the wall. By Lemma \ref{reduce}, the new $A_+$ is the maximal HN factor of $A_+$ with respect to the slope function $Z$, and, up to shift,  the new $A_-$ is the minimal HN factor of $A_-$ with respect to $Z$. 
 
On one side of the wall $\phi(S_1)>\phi(S_2)$. Then  the new $A_+$ is in the subcategory generated
by $S_1$, and the new $A_-$ is in the subcategory generated by $S_2$. The width $w_E(\sigma)$ has increased to $n+\phi(S_1)-\phi(S_2)$.

On the other side of the wall $\phi(S_1)<\phi(S_2)$.  Write $C_\pm$ for the new objects $A_\pm$;
recall that $C_+ = E_+$ and $C_- = E_-[-n]$, where $E_\pm$ are the first and last factor of the new
Harder-Narasimhan filtration of $E$, and $n = \lfloor w_E(\sigma) \rfloor$ is determined by the
width.
Suppose that  the width has also increased on this side. Then  $n=\lfloor w_E\rfloor$ is unchanged,
and so $C_-$  is precisely the minimal HN factor of $A_-$ (rather than its shift).

 By Lemma \ref{wally},  the object $C_+$ is quasistable. Prop. \ref{cases} then shows that it has a single stable factor, call it $T_+$. Similarly, $C_-$ has a single stable factor $T_-$. By Lemma \ref{x} we have  $\Hom_X^k(C_+,C_-)=0$ unless $k=1$, and it follows that \[\Hom_\A(T_-,T_+)=0=\Hom_\A(T_+,T_-).\]
 Applying Lemma \ref{obs} and Lemma \ref{hein} we conclude that  $\{T_-,T_+\}=\{S_1,S_2\}$.  The assumption that the width has increased implies that in fact $T_-=S_1$ and $T_+=S_2$. We thus get a chain of inclusions $S_2=T_+ \subset C_+ \subset A_+$ in $\A$, in contradiction to 
$\Hom_X(S_2, A_+) = 0$ observed above.
\end{proof}

A similar result holds at points of $W_\Z$ of width zero.

\begin{prop} \label{zerocase}
Let $\sigma_0\in W_\Z\subset \Stab(X)$  satisfy $w_E(\sigma_0)=0$. Locally near $\sigma_0$ the
submanifold $W_\Z$ splits $\Stab(X)$ into two connected components. In one $E$ is quasistable, and in the other $w_E(\sigma)>0$. 
\end{prop}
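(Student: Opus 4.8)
The plan is to reduce, exactly as in the proof of Lemma \ref{wall2}, to an analysis inside a finite length category, and then to exploit the indecomposability of $E$ to pin down its behaviour on the two sides of the wall. Since $w_E(\sigma_0)=0$ the object $E$ is $\sigma_0$-semistable, and since $\sigma_0\in W_\Z$ it is not quasistable; hence Prop.~\ref{cases} places us in case (c), so $E$ has exactly two stable factors $S_1,S_2$, whose Mukai vectors $v(S_i)$ are linearly independent. Let $\A\subset\P_0(\phi)$ be the finite length subcategory generated by $S_1,S_2$. By Lemma \ref{reduce} and the remark following it, the HN filtration of $E$ for any nearby $\sigma=(Z,\P)$ is computed inside $\A$ with respect to the stability function $Z$. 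Just as in Lemma \ref{wall2}, near $\sigma_0$ the wall $W_\Z$ is cut out by $\Im\big(Z(S_1)/Z(S_2)\big)=0$ (the two factors have equal phase at $\sigma_0$), and this codimension one submanifold divides a neighbourhood into the two components $\phi(S_1)>\phi(S_2)$ and $\phi(S_1)<\phi(S_2)$.

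On either component $\phi(S_1)\neq\phi(S_2)$, so by Lemma \ref{wally} every $Z$-semistable object of $\A$ is quasistable, i.e.\ isotypic. By continuity $w_E<1$ near $\sigma_0$, so on a side where $E$ is $Z$-unstable its HN filtration has just two steps $0\to E_+\to E\to E_-\to 0$, with $E_+$ isotypic for the higher-phase simple and $E_-=E/E_+$ isotypic for the lower-phase one. It then remains to show that $E$ is unstable on exactly one of the two sides.

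The main obstacle is to rule out that $E$ is unstable on \emph{both} sides, and here the standing hypothesis $\Hom_X(E,E)=\C$ enters decisively, since it forces $E$ to be indecomposable, hence indecomposable in the full subcategory $\A$. Suppose $E$ were unstable on both components. On the side $\phi(S_1)>\phi(S_2)$ the two step HN filtration gives $0\to P\to E\to R\to 0$ with $P$ isotypic for $S_1$ and $R$ isotypic for $S_2$; thus $P$ contains every $S_1$-factor of $E$ and no others, so its length equals the multiplicity of $S_1$ in $E$. Symmetrically, the side $\phi(S_1)<\phi(S_2)$ yields a subobject $Q\hookrightarrow E$ isotypic for $S_2$ and containing every $S_2$-factor. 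Since $P$ and $Q$ are isotypic for different simples we have $P\cap Q=0$, while $\ell(P)+\ell(Q)=\ell(E)$; hence $E=P\oplus Q$, contradicting indecomposability. This decomposition argument is the crux of the proof.

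Finally I would check that $E$ cannot be semistable on both sides either. Writing $v(E)=a\,v(S_1)+b\,v(S_2)$ with $a,b\ge 1$, the phase of $E$ satisfies $\phi(S_2)<\phi(E)<\phi(S_1)$ on the side $\phi(S_1)>\phi(S_2)$; so semistability of $E$ there would force $S_1\notin\operatorname{soc}(E)$, since an $S_1$-subobject would have phase strictly above $\phi(E)$. Likewise semistability on the other side would give $S_2\notin\operatorname{soc}(E)$, whence $\operatorname{soc}(E)=0$, which is absurd. Therefore $E$ is quasistable on exactly one side, where $w_E=0$, and unstable on the other, where $w_E>0$, as claimed.
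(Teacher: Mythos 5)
Your reduction to the finite-length category $\A$, and your socle argument ruling out semistability on both sides, are both fine, and the $E=P\oplus Q$ trick is an attractive idea. But the step it rests on --- that on a side of the wall where $E$ is $Z$-unstable its HN filtration has exactly two steps whose factors are \emph{isotypic for the simple objects} $S_1,S_2$ of $\A$ --- is not justified, and it is precisely the hard point of the proposition. Lemma \ref{wally} only says that a $Z$-semistable object is quasistable, i.e.\ that its $Z$-stable factors have proportional Mukai vectors; it does not say those stable factors are the simples of $\A$. Once $\phi(S_1)\neq\phi(S_2)$ the category $\A$ acquires new $Z$-stable objects (for instance a non-split extension of the lower-phase simple by the higher-phase one is $Z$-stable), and a priori the maximal destabilizing subobject of $E$ could be isotypic for such an extension rather than for a simple. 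In that case your subobject $Q$ need not exist, the identity $\ell(P)+\ell(Q)=\ell(E)$ fails, and the direct-sum contradiction evaporates. So the case ``unstable on both sides'' is not actually excluded by your argument.

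The paper closes exactly this gap by a different mechanism. Using $\Hom_X(E,E)=\C$ it first produces a monomorphism $S_1\to E$ \emph{and} an epimorphism $E\to S_2$ (a nonzero map $E\to S_1$ composed with the inclusion would force $E\cong S_1$), which controls the side $\phi(S_1)>\phi(S_2)$: there the width genuinely increases and one records $\Hom_X(S_2,E)=0$. On the other side it assumes for contradiction that the width increases, lets $T_\pm$ be the single stable factors of the new extremal HN factors, checks $\Hom_\A(T_+,T_-)=0=\Hom_\A(T_-,T_+)$, and then invokes Lemma \ref{obs} together with the non-trivial Lemma \ref{hein} to conclude that $\{T_+,T_-\}=\{S_1,S_2\}$ with $T_+=S_2$; the resulting chain $S_2\subset C_+\subset E$ contradicts $\Hom_X(S_2,E)=0$. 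Some substitute for Lemma \ref{hein} (or for this mono/epi bookkeeping) is needed in your write-up; without it the proof is incomplete.
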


\begin{proof}
The proof is very similar to that of Prop.~\ref{lem:integerwalls}, and we just indicate the
necessary modifications. In the first paragraph, note that $A_+=A_-=E$. Considering the
Jordan-H{\"o}lder filtration of $E$ we can assume that there is a monomorphism $S_1\to E$ as before.
By the assumption that $\sigma_0 \in W_\Z$, this is not an isomorphism, as $E$ cannot be stable.
If there were a nonzero map $E\to S_1$ this would contradict the assumption that $\Hom_X(E,E)=\C$.
It follows
that there is an epimorphism $E\to S_2$ in $\A$. The rest of the proof of Prop.~\ref{lem:integerwalls} applies without change and shows that on one side of the wall $w_E(\sigma)=0$ (i.e. $E$ is semistable), and on the other $w_E(\sigma)>0$. Finally, note that on the first side we must in fact have $E$
quasistable, since if it is not,  $\sigma$ lies on $W_\Z$.
\end{proof}


\section{Flow}
\label{sec:flow}

Let us fix a complex projective K3 surface $X$ and an object $E\in \D(X)$ with assumptions as in Section 3. In particular, the surface $X$ has Picard rank $\rho(X)=1$ and the object $E$ is (semi)rigid and satisfies $\Hom_X(E,E)=\C$. The aim of this section is to construct a flow on the space $\Stab^*_\red(X)$ that decreases the width $w_E$ to the nearest integer
$\lfloor w_E \rfloor$.

\subsection*{Construction} \label{sect:construct}
Let $\sigma=(Z,\P)\in \Stab^*_{\red}(X)$ be a reduced stability condition, and write  $Z(E)=(\Omega,v(E))$ for
some vector $\Omega\in  \N(X)\tensor \C$. Recall that $\Omega \in \Q^+_0(X)$, so in particular
\[(\Omega,\Omega)=0, \quad (\Omega,\bar{\Omega})=2d>0,\]
and the orthogonal to the 2-plane in $\N(X)\tensor \R$ spanned by the real and imaginary parts of  $\Omega$ is a negative definite line.
Let $\Theta=\Theta(\sigma)\in \N(X)\tensor \R$ be the unique  vector satisfying
\begin{equation} \label{eq:defTheta}
(\Theta,\Omega)=0, \quad -(\Theta,\Theta)=d=\half(\Omega,\bar{\Omega}), \quad \Theta\in \CC^+.
\end{equation}

For any stability condition $\sigma$, we define a sign $\epsilon = \epsilon(\sigma)\in \{\pm 1\}$ by the condition
that $v(E_+)=v(A_+)$ lies in $\CC^{\epsilon(\sigma)}$.
\begin{lemma} \label{lem:epsilonlocalconstant}
The sign $\epsilon(\sigma)$ is locally constant on the complement of $W_\Z$.
\end{lemma}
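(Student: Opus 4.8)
The plan is to reduce everything to the component of $\CC$ containing $v(A_+)=v(E_+)$, which by definition is $\CC^{\epsilon(\sigma)}$. First note that $\epsilon$ is well defined: if $w_E(\sigma)>0$ then $A_+$ is (semi)rigid by Lemma \ref{x}(a), and if $w_E(\sigma)=0$ then $A_+=E$; either way $v(A_+)\in\CC$. By Lemma \ref{wall1}, $E_+$ is locally constant away from the $(+)$-walls $W_+$, so $\epsilon$ is locally constant there, and the whole content of the lemma is that $\epsilon$ does not jump across a $(+)$-wall meeting the complement of $W_\Z$. I would phrase this as: no connected $(+)$-wall $W$ is disjoint from $W_\Z$, and prove it by contradiction.

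Assume $W$ is a connected $(+)$-wall with $W\not\subseteq W_\Z$. By Remark \ref{rem:pmwallsintersect}(a) a $(+)$-wall that meets an integral wall coincides with it, so in fact $W\cap W_\Z=\emptyset$; consequently every $\sigma\in W$ has $w_E(\sigma)\notin\Z$, since otherwise $E$ would be non-quasistable at $\sigma$ (not even semistable when $w_E>0$, and equal to the non-quasistable $E_+$ when $w_E=0$), placing $\sigma$ in $W_\Z$. By Remark \ref{rem:pmwallsintersect}(b), $W$ does not coincide with a $(-)$-wall, so $W\not\subseteq W_-$; since $W\cap W_-$ is then a proper submanifold of $W$, I may choose $\sigma_0\in W\setminus W_-$.

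The crux is the contradiction at $\sigma_0$. There $A_+=E_+$ is semistable and non-quasistable, so by Proposition \ref{cases}(c) it has exactly two stable factors $S_1,S_2$; by Lemma \ref{obv} their Mukai vectors $v_1,v_2$ are linearly independent, while $(v_1,v_2)\geq0$ because the $S_i$ are non-isomorphic and stable of the same phase. Lemma \ref{z} then forces $v_1,v_2$ into the opposite components $\CC^+$ and $\CC^-$, for were they in the same one, $(v_1,v_2)\leq0$ would combine with $(v_1,v_2)\geq0$ to make them proportional. Since $\sigma_0\notin W_-$, the factor $E_-$, and hence $A_-=E_-[n]$ with $n=\lfloor w_E\rfloor$ locally constant, is unchanged near $\sigma_0$. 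Crossing $W$, the proof of Lemma \ref{wall1} (via Lemma \ref{reduce}) identifies the new $A_+$ with the maximal HN factor of the object $E_+$ inside the length-two category $\A$ generated by $S_1,S_2$; on the two sides this factor is $S_1$-isotypic, respectively $S_2$-isotypic, so $v(A_+)$ lies in $\CC^+$ on one side and in $\CC^-$ on the other. On at least one of the two sides $v(A_+)$ then lies in the same component as the fixed vector $v(A_-)$, whence $(v(A_+),v(A_-))\leq0$ by Lemma \ref{z}; but $w_E\notin\Z$ there, so Lemma \ref{x}(d) gives $(v(A_+),v(A_-))>0$, a contradiction.

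Hence no such $W$ exists, so $W_+\subseteq W_\Z$ and $E_+$ is locally constant on the complement of $W_\Z$, which proves the claim. I expect the genuine difficulty to be the bookkeeping in the third paragraph: one must ensure that $v(A_-)$ stays in a single component of $\CC$ while $v(A_+)$ flips — this is exactly why reducing to $\sigma_0\notin W_-$ (so that $E_-$ does not change simultaneously) is necessary — and must verify that the flip of $v(A_+)$ really occurs, i.e.\ that the two one-sided maximal factors are isotypic in the opposite cones $\CC^\pm$.
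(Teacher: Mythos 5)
There is a genuine gap, and it sits exactly where you flagged it: the claim that on the two sides of the $(+)$-wall the new $A_+$ is $S_1$-isotypic, respectively $S_2$-isotypic, so that $v(A_+)$ flips between $\CC^+$ and $\CC^-$. The Jordan-H{\"o}lder filtration gives a monomorphism from only \emph{one} of the two stable factors into $E_+$, say $S_1\to E_+$; on the side where $\phi(S_1)>\phi(S_2)$ this does force the maximal HN factor of $E_+$ in $\A$ to be $S_1$-isotypic, but on the other side there is in general no monomorphism $S_2\to E_+$, and the maximal HN factor need not be $S_2$-isotypic. For instance, if $E_+$ is a non-split extension $0\to S_1\to E_+\to S_2\to 0$, then on the side $\phi(S_2)>\phi(S_1)$ the object $E_+$ is itself $Z$-stable and is its own maximal HN factor, with Mukai vector $v_1+v_2$ --- a fixed vector whose component of $\CC$ does not depend on which side you are on. So the flip need not occur, the contradiction evaporates, and indeed it must: your intended conclusion $W_+\subseteq W_\Z$ is false. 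The paper's later arguments (the flow of Section \ref{sec:flow} inside $w_E^{-1}(n,n+1)$, and the wall-crossing at the end of the proof of Proposition \ref{prop:flow}) explicitly cross $(\pm)$-walls at non-integral width, so $(+)$-walls disjoint from $W_\Z$ certainly exist.

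The ingredients you assembled are, however, exactly the right ones --- you have just pointed them in the wrong direction. At $\sigma_0\in W\setminus W_-$ the object $A_-$ is locally constant, and by Lemma \ref{x}(d) together with Lemma \ref{z} the vectors $v(A_+)$ and $v(A_-)$ must lie in \emph{opposite} components of $\CC$ on both sides of $W$ (where $w_E\notin\Z$). Since $v(A_-)$ does not move, this pins $v(A_+)$ to a fixed component: rather than proving the wall cannot exist, the argument proves that $\epsilon$ cannot jump across it. That observation, combined with the local constancy of $A_+$ along $(+)$-walls and on their complement (Lemma \ref{wall1}) and the fact that a $(+)$-wall and a $(-)$-wall can only coincide inside $W_\Z$ (Remark \ref{rem:pmwallsintersect}(b)), is precisely the paper's proof.
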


\begin{proof}
By Lemma \ref{wall1}, the object $A_+$ is locally constant  on the
complement of the set of $(+)$-walls. By definition, it follows that $\epsilon$ is
also locally constant on this complement.

Similarly, $E_-$ is locally constant on the complement of the $(-)$-walls and 
if we restrict further to the complement of $W_\Z$, the same holds for $A_-$. But due
to Lemma \ref{x} (d) and Lemma \ref{z}, on the complement of $W_\Z$ the objects $A_\pm$ lie in different connected components of the cone $\CC$; thus the object $A_-$ also determines  the sign $\epsilon(\sigma)$.

Now recall from Remark  \ref{rem:pmwallsintersect} (b) that a $(+)$-wall can only coincide with
a $(-)$-wall if they are contained in $W_\Z$. Thus we have proved that $\epsilon(\sigma)$ is locally
constant on $\Stab(X) \setminus W_\Z$.
\end{proof}

Define a complex number of unit modulus by
\begin{equation} \label{eq:defzeta}
\zeta=\zeta(\sigma)=i\cdot \exp \frac{i\pi}{2}\big(\phi(A_+)+\phi(A_-)\big).\end{equation}
Finally, define a nonzero vector
\begin{equation} \label{eq:defv}
v=v(\sigma)=\epsilon(\sigma) \cdot  
\zeta(\sigma)\cdot  \Theta(\sigma) \in \N(X)\tensor \C.\end{equation}

\begin{lemma} \label{lemma:flowlocal}
The flow
\[\dbydt{} \Omega = v(\sigma)\]
of the vector field $v(\sigma)$ exists locally uniquely on
$\Stab^*_\red(X) \setminus w_E^{-1}(\Z)$,
the space of reduced stability conditions of non-integral width.
It preserves the positive real number $2d=(\Omega, \overline{\Omega})$. 
\end{lemma}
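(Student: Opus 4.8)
The plan is to treat the two assertions separately. Local existence and uniqueness of the flow will follow from the Picard--Lindel\"of theorem once I check that $v(\sigma)$ is a locally Lipschitz vector field tangent to $\Stab^*_\red(X)$; the conservation of $2d$ is then a short computation with the definitions.

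I would first analyse the three factors of $v=\epsilon(\sigma)\cdot\zeta(\sigma)\cdot\Theta(\sigma)$ as functions of the local coordinate $\Omega$ under the local homeomorphism $\Stab^*_\red(X)\to\Q^+_0(X)$. The vector $\Theta(\sigma)$ is determined by $\Omega$ alone: by \eqref{eq:defTheta} it is the generator, normalised by $-(\Theta,\Theta)=d$ and lying in $\CC^+$, of the negative-definite line orthogonal to the positive $2$-plane $\langle\Re\Omega,\Im\Omega\rangle$; as this line and its conformal normalisation vary real-analytically with $\Omega\in\Q^+$, the assignment $\Omega\mapsto\Theta$ is smooth. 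The sign $\epsilon(\sigma)$ is locally constant on our domain, since $W_\Z\subseteq w_E^{-1}(\Z)$ means that $\Stab^*_\red(X)\setminus w_E^{-1}(\Z)$ avoids $W_\Z$, so Lemma \ref{lem:epsilonlocalconstant} applies. Finally $\zeta(\sigma)$ depends only on $\phi(A_+)+\phi(A_-)=\phi^+(E)+\phi^-(E)+n$, where $n=\lfloor w_E\rfloor$ is locally constant off integral width; the phases $\phi^\pm(E)$ are continuous on $\Stab(X)$, smooth on each chamber, and (as noted after Lemma \ref{wall1}) extend smoothly to each chamber closure with matching values across the bounding walls. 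Hence $v$ is continuous and smooth on each chamber with smooth extension to the closure, and a standard estimate (splitting a short segment at its crossing point with a wall) upgrades the chamber-wise smooth bounds to a genuine local Lipschitz bound. For tangency, $\Stab^*_\red(X)$ is locally cut out by the holomorphic equation $(\Omega,\Omega)=0$, whose differential at $\Omega$ is $u\mapsto 2(\Omega,u)$; since $(\Omega,v)=\epsilon\zeta\,(\Omega,\Theta)=0$ by \eqref{eq:defTheta}, the field $v$ is tangent to $\Stab^*_\red(X)$, and Picard--Lindel\"of yields local existence and uniqueness of the integral curves.

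For the conservation law I would differentiate $(\Omega,\bar\Omega)$ along the flow, using that the Mukai pairing is $\C$-bilinear and satisfies $\overline{(a,b)}=(\bar a,\bar b)$, and that $\epsilon,\Theta$ are real so that $\bar v=\epsilon\bar\zeta\,\Theta$:
\[
\frac{d}{dt}(\Omega,\bar\Omega)=(v,\bar\Omega)+(\Omega,\bar v)=\epsilon\zeta\,(\Theta,\bar\Omega)+\epsilon\bar\zeta\,(\Omega,\Theta).
\]
Both terms vanish, as $(\Omega,\Theta)=0$ by \eqref{eq:defTheta} and $(\Theta,\bar\Omega)=\overline{(\Theta,\Omega)}=0$ since $\Theta$ is real; thus $2d$ is constant. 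The identical computation gives $\frac{d}{dt}(\Omega,\Omega)=2(\Omega,v)=0$, reconfirming that the flow stays reduced.

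The step I expect to be the main obstacle is precisely this regularity needed for uniqueness: the HN factors $E_\pm$, and with them the smooth formulas for $\phi^\pm(E)$, genuinely jump across the $(\pm)$-walls, so $v$ is only piecewise smooth, and one must verify that continuity across the walls forces a local Lipschitz bound (for which it helps that $\phi^\pm(E)$ are in fact $1$-Lipschitz for Bridgeland's metric on $\Stab(X)$) and that the walls do not accumulate pathologically near a point of non-integral width.
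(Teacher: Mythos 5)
Your proposal is correct and follows essentially the same route as the paper: continuity of $v(\sigma)$ (via the local constancy of $\epsilon$ off $W_\Z$ and continuity of $\zeta$ and $\Theta$) plus local finiteness of the walls gives a local Lipschitz bound, Picard--Lindel\"of gives existence and uniqueness, and the conservation of $(\Omega,\overline{\Omega})$ and $(\Omega,\Omega)$ follows from $(\Omega,\Theta)=(\overline{\Omega},\Theta)=0$. Your write-up merely spells out in more detail the regularity and tangency points that the paper leaves implicit.
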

In other words, for any point in $\Stab^*_\red(X) \setminus w_E^{-1}(\Z)$, there exists an open
neighbourhood $U$, $\epsilon >0$, and a continuous map $U \times [0, \epsilon) \to \Stab^*_\red(X)$
solving the differential equation given by the vector field $v(\sigma)$.
\begin{proof}
The vector $v(\sigma)$ varies continuously on $\Stab^*_\red(X)$ by the above Lemma. Since the set of $(\pm)$-walls is
locally finite, the resulting vector field is Lipschitz continuous on every compact subset; by the 
Picard-Lindel\"of Theorem, the flow then exists locally and is unique.
From $(\Omega, \Theta) = 0 = (\overline{\Omega}, \Theta)$ one obtains
\[\dbydt{} (\Omega, \Omega) = \dbydt{} (\Omega, \overline{\Omega}) = 0.\] Thus
the condition of being reduced is preserved, and $(\Omega, \overline{\Omega})$ is constant.
\end{proof}

\subsection*{Flow decreases width}
Simple sign observations show that the flow defined in the last subsection moves $Z(A_\pm)$ in the direction
$\mp\zeta(\sigma)$ and hence decreases the width, see Figure \ref{fig:flow}.
To make this observation precise, we first point out that since 
$w_E$ is smooth on the closure of each chamber, the function
$w_E(\sigma(t))$ will be piecewise differentiable. Thus we can define $\dbydt {w_E}(\sigma)(t)$ at time $t$
to be the derivative of $w_E$ restricted to the interval $[t, t+\epsilon)$.

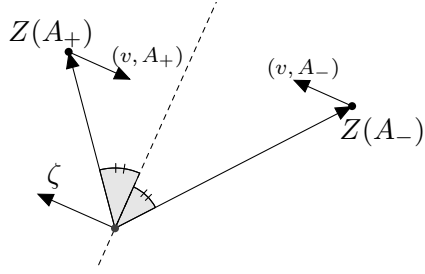
\begin{figure}
\definecolor{uququq}{rgb}{0.25,0.25,0.25}
\begin{tikzpicture}[line cap=round,line join=round,>=triangle 45,x=1.0cm,y=1.0cm]
\clip(-1.7,-0.5) rectangle (4.2,3);
\draw [shift={(0,0)},fill=black,fill opacity=0.1] (0,0) -- (27.44:0.6) arc (27.44:66.04:0.6) --
cycle;
\draw [shift={(0,0)},fill=black,fill opacity=0.1] (0,0) -- (66.04:0.8) arc (66.04:104.64:0.8) --
cycle;
\draw [line width=0.4pt,dash pattern=on 2pt off 2pt,domain=-1.7:4.2] plot(\x,{(-0-3.6*\x)/-1.6});
\draw [->] (0,0) -- (3.12,1.62);
\draw [->] (0,0) -- (-0.61,2.34);
\draw [->,line width=0.4pt] (0,0) -- (-1.03,0.46);
\draw [shift={(0,0)}] (27.44:0.6) arc (27.44:66.04:0.6);
\draw(0.34,0.42) -- (0.42,0.51);
\draw(0.4,0.37) -- (0.48,0.45);
\draw [shift={(0,0)}] (66.04:0.8) arc (66.04:104.64:0.8);
\draw(0.01,0.74) -- (0.01,0.86);
\draw(0.11,0.73) -- (0.13,0.85);
\draw [->] (-0.61,2.34) -- (0.21,1.97);
\draw [->] (3.12,1.62) -- (2.34,1.97);
\fill [color=uququq] (0,0) circle (1.5pt);
\fill [color=black] (3.12,1.62) circle (1.5pt);
\draw[color=black] (3.52,1.28) node {$Z(A_-)$};
\fill [color=black] (-0.61,2.34) circle (1.5pt);
\draw[color=black] (-0.84,2.58) node {$Z(A_+)$};
\draw[color=black] (-0.8,0.7) node {$\zeta$};
\begin{scriptsize}
\draw[color=black] (0.43,2.27) node {$(v, A_+)$};
\draw[color=black] (2.48,2.12) node {$(v, A_-)$};
\end{scriptsize}
\end{tikzpicture}
\caption{The flow and $Z(A_\pm)$}
\label{fig:flow}
\end{figure}

\begin{lemma} \label{lem:decreasingwidth}
Under the flow of $v(\sigma)$, the functions $\phi(A^+)$ and $\phi(A^-)$ are decreasing and
increasing, respectively, and the derivative
$\dbydt{} w_E(\sigma(t))$ is negative. Moreover, setting $\theta=w_E(\sigma)-\lfloor w_E(\sigma)\rfloor$, one has
\[-\dbydt{} w_E(\sigma) \geq \frac{2}{\pi} \cos \left(\frac{\pi \theta}2\right) > 0.\]
\end{lemma}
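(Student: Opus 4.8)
The plan is to compute $\dbydt{} w_E = \dbydt{}\phi(A_+) - \dbydt{}\phi(A_-)$ by differentiating the phase functions along the flow, and to control the sign and size of each term using the explicit formula for $v(\sigma)$. Recall that $Z(A_\pm) = (\Omega, v(A_\pm))$ and that the phase is $\phi(A_\pm) = \frac{1}{\pi}\arg Z(A_\pm)$ (with the appropriate continuous branch fixed by the HN data). Since $\dbydt{}\Omega = v(\sigma)$, we have $\dbydt{} Z(A_\pm) = (v(\sigma), v(A_\pm))$, so the first step is to evaluate the pairings $(v, v(A_+))$ and $(v, v(A_-))$.

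The key computation uses $v = \epsilon\,\zeta\,\Theta$ together with the defining properties of $\Theta$ in \eqref{eq:defTheta}. First I would establish the geometric picture suggested by Figure \ref{fig:flow}: the vector $\zeta$, defined in \eqref{eq:defzeta}, is chosen so that $\zeta$ bisects the angle between $Z(A_+)$ and $Z(A_-)$ after a rotation by $i$; explicitly, $\arg\zeta = \frac{\pi}{2} + \frac{\pi}{2}(\phi(A_+)+\phi(A_-))$. The real pairings $(\Theta, v(A_\pm))$ are positive: since $A_\pm$ lie in opposite components $\CC^{\pm\epsilon}$ of the cone (by Lemma \ref{x}(d) and Lemma \ref{z}), and $\Theta \in \CC^+$, the sign $\epsilon$ is precisely arranged so that $\epsilon(\Theta, v(A_+)) < 0$ while $\epsilon(\Theta, v(A_-)) > 0$, or vice versa — I would pin down the signs carefully so that $Z(A_+)$ moves in direction $-\zeta$ and $Z(A_-)$ moves in direction $+\zeta$. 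Then a short trigonometric argument shows $\dbydt{}\phi(A_+) < 0$ and $\dbydt{}\phi(A_-) > 0$, giving $\dbydt{} w_E < 0$.

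For the quantitative bound I would write $\dbydt{}\phi(A_\pm) = \frac{1}{\pi}\,\Im\!\big(\dbydt{} Z(A_\pm)/Z(A_\pm)\big) = \frac{1}{\pi}\,\Im\!\big((v, v(A_\pm))/Z(A_\pm)\big)$. Since $|\zeta| = 1$ and $-(\Theta,\Theta) = d$, the magnitude of $(v, v(A_\pm))$ is controlled by the pairing $|(\Theta, v(A_\pm))|$, which in turn is bounded below in terms of $d = \frac12(\Omega,\bar\Omega)$ and $|Z(A_\pm)|$. The angle between $\zeta$ and $Z(A_\pm)$ is $\frac{\pi}{2} \mp \frac{\pi}{2}\theta$ (where $\theta = w_E - \lfloor w_E\rfloor$ measures the angular gap between $A_+$ and $A_-$ modulo $1$), so each $\Im$ term contributes a factor $\cos(\frac{\pi\theta}{2})$. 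Adding the two contributions and using the Cauchy–Schwarz-type lower bound $|(\Theta, v(A_\pm))| \geq d/|Z(A_\pm)| \cdot |Z(A_\pm)| = \ldots$ reduces the whole estimate to the clean inequality $-\dbydt{} w_E \geq \frac{2}{\pi}\cos(\frac{\pi\theta}{2})$.

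The main obstacle I anticipate is the bookkeeping of signs and branch choices: correctly identifying which component $\CC^{\pm}$ contains $v(A_+)$ versus $v(A_-)$, verifying that the definition of $\epsilon(\sigma)$ makes the flow decrease rather than increase the width, and ensuring the argument of $\zeta$ is the correct bisector so that both phase derivatives push the width down simultaneously. The second subtlety is extracting the explicit constant $\frac{2}{\pi}\cos(\frac{\pi\theta}{2})$: this requires being careful that the geometric lower bound on $|(\Theta, v(A_\pm))|$ is uniform (independent of the particular stable factors), which should follow from the normalization $-(\Theta,\Theta) = d$ and the fact that $\Theta$ pairs maximally against vectors in the cone. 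The piecewise-differentiability noted before the lemma statement means the inequality only needs to be checked on the interior of each chamber, where $A_\pm$ are constant, so no smoothness issue across walls arises.
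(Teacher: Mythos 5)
Your proposal follows essentially the same route as the paper: the sign analysis via $\epsilon$ and the opposite cone components $\CC^\pm$ containing $v(A_+)$ and $v(A_-)$, the observation that $\zeta$ is perpendicular to the bisector of $Z(A_\pm)$ so that the flow pushes the two central charges toward each other, and the angular-velocity computation contributing a factor $\cos(\pi\theta/2)$ from the angle $\tfrac{\pi}{2}\mp\tfrac{\pi\theta}{2}$ between $\zeta$ and $Z(A_\pm)$ all match the paper's argument (which phrases the last step via angles $\alpha_\pm$ rather than $\Im(\dot Z/Z)$, but this is cosmetic). The one step you leave garbled is the crucial lower bound on the speed: your ``Cauchy--Schwarz-type'' expression $|(\Theta,v(A_\pm))|\geq d/|Z(A_\pm)|\cdot|Z(A_\pm)|=\ldots$ is not the right quantity. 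The correct statement, and the linchpin of the quantitative estimate, is the signature identity $|(\Omega,v)|^2-(\Theta,v)^2=d\,(v,v)$ for the orthogonal basis $\Re\Omega,\Im\Omega,\Theta$, which for a (semi)rigid class ($(v,v)\leq 0$) gives $|(\Theta,v(A_\pm))|\geq|Z(A_\pm)|$ --- a bound with no $d$ in it. Plugging this into your formula $\dbydt{}\phi(A_\pm)=\tfrac1\pi\Im\bigl(\dot Z(A_\pm)/Z(A_\pm)\bigr)$ yields $\mp\dbydt{}\phi(A_\pm)\geq\tfrac1\pi\cos(\tfrac{\pi\theta}{2})$ and hence the claimed inequality; with that identity supplied, your outline is complete.
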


\begin{proof}
Note that
\[\epsilon(\sigma)\cdot  (\Theta,A_+)<0, \quad \epsilon(\sigma)\cdot  (\Theta,A_-)>0.\]
Indeed, since $\Theta\in \CC^+$ we have $(\Theta,A_+)\in \epsilon(\sigma) \cdot \R_{<0}$, and the
objects $A_\pm$ have classes in opposite cones by Lemma \ref{x}. Thus the flow has the effect of
adding negative multiples of the vector $\zeta$ to $Z(A_+)$ and positive multiples to $Z(A_-)$. It
is then clear that this decreases $\phi(A_+)$ and increases $\phi(A_-)$.

Writing $\Omega=X+iY$ and $\Theta=W$, the vectors $(W,X,Y)$ form an orthogonal basis for $\N(X)\tensor \R$ such that
\[(X,X)=(Y,Y)=-(W,W)=d>0.\]
It follows that for any vector $v\in \N(X)\tensor \R$ one has
\[|(\Omega,v)|^2 -(\Theta,v)^2 = d(v,v).\]
In particular, if $v=v(E)$ is the Mukai vector of a (semi)rigid object, then $(v,v)\leq 0$
gives
\begin{equation} \label{ineq}
|(\Theta,v)|\geq |Z(E)|.
\end{equation}

\definecolor{uququq}{rgb}{0.25,0.25,0.25}
\begin{figure}
\begin{tikzpicture}[line cap=round,line join=round,>=triangle 45,x=1.0cm,y=1.0cm]
\clip(-2.2,-0.4) rectangle (2.8,3.3);
\draw [shift={(0,0)},fill=black,fill opacity=0.1] (0,0) -- (54.32:0.8) arc (54.32:90:0.8) -- cycle;
\draw [shift={(0,0)},fill=black,fill opacity=0.1] (0,0) -- (90:0.6) arc (90:125.68:0.6) -- cycle;
\draw [->] (0,0) -- (1.68,2.34);
\draw [line width=0.4pt,dash pattern=on 2pt off 2pt] (0,-0.4) -- (0,3.3);
\draw (1.68,2.34)-- (0,2.34);
\draw [->] (0,0) -- (-1.07,1.49);
\draw [line width=0.4pt] (0,1.49)-- (-1.07,1.49);
\draw [->,line width=0.4pt] (0,0) -- (-1,0);
\draw [shift={(0,0)}] (90:0.6) arc (90:125.68:0.6);
\draw(-0.17,0.51) -- (-0.2,0.63);
\fill [color=uququq] (0,0) circle (1.5pt);
\fill [color=black] (1.68,2.34) circle (1.5pt);
\draw[color=black] (2.28,2.6) node {$Z(A_-)$};
\fill [color=uququq] (0,2.34) circle (1.5pt);
\draw[color=uququq] (-0.18,2.25) node {$y_-$};
\draw[color=black] (0.94,2.46) node {$x_-$};
\fill [color=black] (-1.07,1.49) circle (1.5pt);
\draw[color=black] (-1.69,1.62) node {$Z(A_+)$};
\fill [color=uququq] (0,1.49) circle (1.5pt);
\draw[color=uququq] (0.34,1.44) node {$y_+$};
\draw[color=black] (-0.42,1.62) node {$x_+$};
\draw[color=black] (-0.84,0.30) node {$\zeta$};
\draw[color=black] (0.45,0.93) node {$\alpha_-$};
\draw[color=black] (-0.28,0.82) node {$\alpha_+$};
\end{tikzpicture}
\caption{Computing the derivative $v(\sigma).w_E$}
\label{fig:derivative}
\end{figure}
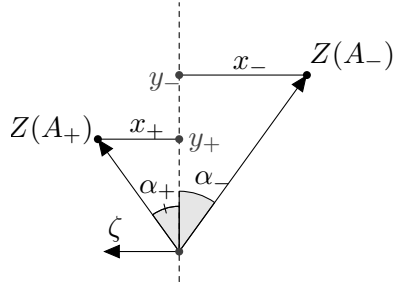

We will prove the inequality at time $t = 0$.
Consider a stability condition $\sigma$ with width $w$. Set $\lfloor w\rfloor =n $ and put
$\theta=w-\lfloor w\rfloor$. Rotating by a fixed scalar $z \in \C$, we can assume that at time $t = 0$,
we have $\zeta=-1$ and $\phi(A_\pm)=(1\pm \theta)/2$.
Set $ Z(A_\pm)=x_\pm + i y_\pm$. As we flow, there are angles $0 \le \alpha_\pm < \frac{\pi}2$ such that
\[ \frac{x_\pm}{y_\pm} =\mp \tan (\alpha_\pm), \quad \frac{y_\pm}{|Z(A_\pm)|}=\cos (\alpha_\pm),\]
see also Figure \ref{fig:derivative}.
At time $t = 0$, we have $\alpha_\pm=\pi\theta/2$.

Since $\zeta(0)$ is real, the derivatives $\dbydt{} y_\pm$ vanish at $t = 0$, and so
\[\mp \sec^2(\alpha_\pm) \left. \dbydt{\alpha_\pm} \right|_{t=0} =\frac 1{\cos(\alpha_\pm)\cdot
|Z(A_\pm)|} \left. \dbydt{x_\pm} \right|_{t=0}.\] 
But $\dbydt{} x_\pm|_{t=0}=\pm |(\Theta,A_\pm)|$, so by the above inequality \eqref{ineq} we get
 \[0\geq -\cos \big(\frac{\pi\theta}{2}\big)\geq \left.\dbydt{\alpha_\pm}\right |_{t=0} .\]
 Writing $\alpha_++\alpha_-=\pi(w-n)$ gives the result.
\end{proof}

In particular, unless the flow ceases to exist at an earlier point in time, it takes a point with non-integral
width $w$ to a point of width $\lfloor w\rfloor$ in finite time less than
$\frac{\pi}2 \left(\cos(\half \pi\theta)\right)^{-1}$.


\subsection*{Global properties}
We now study the flow $\sigma(t)$ defined above in more detail.
Let $\sigma\in \Stab^*_\red(X)$  be a stability condition with $w_E(\sigma) \in (n, n+1)$, and let $I\subset [0,\infty)$ be the maximal
interval of definition of the flow in $w_E^{-1}(n, n+1)$ starting at $\sigma$.
By Lemma \ref{lem:decreasingwidth}, this interval is necessarily finite. Moreover $I$ must be of the form $I=[0,t_0)$ since the flow can always be extended in the neighbourhood of any given stability condition. 
Thus we have a flow
\begin{equation}
\label{flow}\sigma\colon [0,t_0) \lra\Stab^*_\red(X)\setminus w^{-1}(\Z).\end{equation}
Let $\Omega(t) \in \Q^+(X)$ be the underlying flow of central charges, and let us shorten notation
by writing $\Theta(t) := \Theta(\sigma(t)) \in \CC^+$ and $\zeta(t) = \zeta(\sigma(t)) \in \C$
for the quantities defined above in \eqref{eq:defTheta} and \eqref{eq:defzeta}; in addition, recall
from Lemma \ref{lem:epsilonlocalconstant} that $\epsilon = \epsilon(\sigma(t))$ is constant along
the flow line.

\begin{lemma}
The vector $\Theta(t)$ satisfies $(\Theta(t), \Theta(t)) = -d$ for all $t$, and  obeys the differential equation
\[
\dbydt \Theta(t)= \epsilon \Re \zeta(t) \Re \Omega(t) + \epsilon \Im \zeta(t) \Im \Omega(t).
\]
\end{lemma}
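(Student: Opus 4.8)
The plan is to treat the two assertions separately, the first being essentially immediate. By definition the vector $\Theta(\sigma)$ satisfies $-(\Theta,\Theta)=d=\half(\Omega,\bar\Omega)$ at every point of $\Stab^*_\red(X)$, as recorded in \eqref{eq:defTheta}. Lemma \ref{lemma:flowlocal} already shows that the flow preserves the quantity $(\Omega,\bar\Omega)=2d$, so $d$ is a genuine constant along the flow line; hence $(\Theta(t),\Theta(t))=-d$ holds for all $t$ with this one fixed constant, which is the first claim.

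For the differential equation I would first record two preliminary facts. Since $\Theta(t)$ is real and the Mukai form is extended $\C$-bilinearly, the defining relation $(\Theta,\Omega)=0$ automatically gives $(\Theta,\bar\Omega)=\overline{(\Theta,\Omega)}=0$ as well; equivalently $\Theta(t)$ is orthogonal to both $\Re\Omega(t)$ and $\Im\Omega(t)$. Moreover, the three conditions in \eqref{eq:defTheta} determine $\Theta$ as a smooth function of $\Omega$ alone on $\Q^+(X)$: the two linear orthogonality conditions cut out the negative-definite line $\R\cdot\Theta$, which varies smoothly with $\Omega$, and the normalization $-(\Theta,\Theta)=d$ together with the choice of the component $\CC^+$ then pins down $\Theta$ smoothly. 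Since the flow $\Omega(t)$ solves an ODE whose right-hand side $v(\sigma)$ is continuous, it is $C^1$, and therefore $\Theta(t)=\Theta(\Omega(t))$ is $C^1$, so the following differentiations are legitimate.

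Now I would differentiate the defining relations along the flow, using $\dbydt{\Omega}=v(\sigma)=\epsilon\zeta\Theta$. Differentiating $(\Theta,\Theta)=-d$ gives $(\dbydt{\Theta},\Theta)=0$, so $\dbydt{\Theta}$ is orthogonal to $\Theta$ and hence lies in the real $2$-plane spanned by $\Re\Omega$ and $\Im\Omega$; write $\dbydt{\Theta}=a\,\Re\Omega+b\,\Im\Omega$ with $a,b\in\R$. Differentiating $(\Theta,\Omega)=0$ and substituting $\dbydt{\Omega}=\epsilon\zeta\Theta$ yields $(\dbydt{\Theta},\Omega)=-(\Theta,\epsilon\zeta\Theta)=-\epsilon\zeta(\Theta,\Theta)=\epsilon\zeta d$. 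On the other hand, using the relations $(\Re\Omega,\Re\Omega)=(\Im\Omega,\Im\Omega)=d$ and $(\Re\Omega,\Im\Omega)=0$ coming from \eqref{eq:defTheta}, a direct computation gives $(\dbydt{\Theta},\Omega)=(a\,\Re\Omega+b\,\Im\Omega,\ \Re\Omega+i\,\Im\Omega)=d(a+ib)$. Comparing the two expressions forces $a+ib=\epsilon\zeta$, and since $\epsilon\in\{\pm1\}$ is real this means $a=\epsilon\Re\zeta$ and $b=\epsilon\Im\zeta$, which is exactly the asserted formula for $\dbydt{\Theta}$.

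The computation is short and essentially mechanical once the orthogonal frame $(\Theta,\Re\Omega,\Im\Omega)$ is in place, so I do not anticipate a serious obstacle. The only points that demand care are the regularity of $\Theta(t)$, which is handled by the implicit description of $\Theta$ as a smooth function of $\Omega$ above, and the bookkeeping forced by the $\C$-bilinear (rather than Hermitian) nature of the Mukai pairing, which is precisely what produces the clean factor $a+ib$ and identifies the real and imaginary parts of $\epsilon\zeta$ as the coefficients of $\Re\Omega$ and $\Im\Omega$.
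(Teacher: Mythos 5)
Your proposal is correct and follows essentially the same route as the paper: both arguments work in the orthogonal frame $(\Theta, \Re\Omega, \Im\Omega)$, use the constancy of $d$ from Lemma \ref{lemma:flowlocal}, and determine $\dbydt{\Theta}$ by differentiating the two defining relations $(\Theta,\Theta)=-d$ and $(\Theta,\Omega)=0$ along the flow. Your explicit remarks on the smooth dependence of $\Theta$ on $\Omega$ and the $\C$-bilinearity of the pairing are sound supplementary bookkeeping rather than a different argument.
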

\begin{proof}
The fact that $d$ is constant under the flow was already proved in Lemma \ref{lemma:flowlocal}. Write $\Psi(t)$ for the right-hand side of the above equation. It is sufficient to show that
$\dbydt \Theta(t)$ and $\Psi(t)$ have the same pairing with each vector
in the orthogonal basis $\Theta(t)$, $\Re \Omega(t)$ and $\Im \Omega(t)$ of $\NS(X) \otimes \R$.
This follows from
\[ 
\left( \dbydt{\Theta(t)}, \Theta(t) \right) = \half \dbydt{} \bigl(\Theta(t), \Theta(t)\bigr) = 0
= \bigl(\Psi(t), \Theta(t)\bigr),
\]
and the fact that $\dbydt {}(\Theta(t),\Omega(t))=0$, which implies that
\[\left(\dbydt{\Theta(t)}, \Omega(t)\right) =\left(\Theta(t), \dbydt{\Omega(t)}\right)= \epsilon \zeta(t) d= \bigl(\Psi(t), \Omega(t)\bigr).\]
%
%
\end{proof}

Note that $(\xi, \xi) = -d$ defines the Minkowski model of the hyperbolic plane as a subset $\HH
\subset \CC^+$. Up to rescaling by $\frac 1{\sqrt{d}}$, the standard invariant metric on $\HH$ is induced by the quadratic form on
$\N(X) \otimes \R$. In particular, the vectors $\Re \Omega, \Im
\Omega$ form an orthonormal basis of the tangent space to $\HH$ at $\Theta$.
Since $\abs{\zeta} = 1$, the vector $\Theta(t)$ is moving in $\HH$ with
constant speed. Since $\HH$ is complete, the limit $\lim_{t \to t_0} \Theta(t)$ exists, and $\Theta$
extends to a continuous function on the closed interval $[0, t_0]$. 

It follows that $\Omega(t)$ also extends to a continuous function on $[0, t_0]$, as it is the integral of the
continuous function $\epsilon \cdot\zeta(t)\cdot  \Theta(t)$. Since
$(\Omega(t), \overline{\Omega(t)})$ is constant, we also have $\Omega_0:= \Omega(t_0) \in \Q^+(X)$.
If $\Omega_0$ lies in the subset $\Q_0^+(X) \subset \Q^+(X)$, then by Theorem \ref{bridge}, the path $\Omega(t)$ lifts to a continuous path \[\sigma \colon [0, t_0] \lra \Stab^*_\red(X).\]
 By the maximality of the interval $[0,t_0)$ it follows that $w_E(\sigma(t_0)) = n$, as desired. The only other possibility is $(\Omega_0, \delta) = 0$ for some root
$\delta \in \Delta(X)$; we prove that this cannot happen in the next section.
\smallskip

Later on we shall need the following simple statement about the behaviour of the flow near an integral wall:
\begin{lemma}
\label{lem:extends}
The vector field $v(\sigma)$ on the open set \[U = w_E^{-1}(n, n+1)\subset \Stab^*_\red(X)\]
extends continuously to the closure $\overline U$ of $U$ in
$w_E^{-1}([n, n+1))$, and is transversal to the boundary wall where $w_E = n$.
\end{lemma}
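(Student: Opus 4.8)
The strategy is to analyze the three factors of $v(\sigma)=\epsilon(\sigma)\,\zeta(\sigma)\,\Theta(\sigma)$ from \eqref{eq:defv} one at a time, and then combine. The vector $\Theta(\sigma)$ is determined by $\Omega=\Omega(\sigma)$ alone through the conditions \eqref{eq:defTheta}, which single it out smoothly for every $\Omega\in\Q^+(X)$; since the forgetful map $\sigma\mapsto\Omega$ is continuous, $\Theta$ is continuous on all of $\Stab^*_\red(X)$, and in particular on $\overline U$. The scalar $\zeta(\sigma)$ of \eqref{eq:defzeta} depends only on $\phi(A_+)+\phi(A_-)=\phi_+(E)+\phi_-(E)+n$, where we use that $A_-=E_-[n]$ and that $n=\lfloor w_E\rfloor$ takes the constant value $n$ throughout $w_E^{-1}([n,n+1))$. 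As $\phi_\pm(E)$ are continuous, so is $\zeta$ on this set. Thus the only factor requiring real work is the discrete sign $\epsilon(\sigma)$.

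By Lemma \ref{lem:epsilonlocalconstant} the sign $\epsilon$ is locally constant on $U\subset\Stab(X)\setminus W_\Z$, so it suffices to check that it has a well-defined limit as $\sigma\to\sigma_0$ for each boundary point $\sigma_0\in W_\Z$ with $w_E(\sigma_0)=n$. The plan here is to exploit the local wall structure: by Remark \ref{rem:pmwallsintersect}(a) any $(\pm)$-wall meeting $W_\Z$ coincides with $W_\Z$, and since the walls are locally finite this means that in a small enough neighbourhood of $\sigma_0$ the only wall is $W_\Z$ itself. By Prop.\ \ref{lem:integerwalls} (or Prop.\ \ref{zerocase} when $n=0$) this wall cuts the neighbourhood into exactly two chambers, $\{w_E<n\}$ and $U=\{w_E>n\}$. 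Hence $\epsilon$ is genuinely constant on the $U$-side near $\sigma_0$, and we extend $v$ to $\sigma_0$ by this constant value; continuity of the resulting extension then follows from that of $\Theta$ and $\zeta$.

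For transversality, near such a $\sigma_0$ the width $w_E$ takes the constant value $n$ along $W_\Z$ (again Prop.\ \ref{lem:integerwalls}/\ref{zerocase}), and $w_E$ is smooth on the closure of the single $U$-side chamber, as observed after Lemma \ref{wall1}. Consequently $T_{\sigma_0}W_\Z\subseteq\ker\nabla w_E(\sigma_0)$. The plan is to evaluate the directional derivative $v(\sigma_0)\cdot w_E$ as the limit, from within $U$, of $\dbydt{} w_E(\sigma(t))$. Lemma \ref{lem:decreasingwidth} gives $\dbydt{} w_E\le-\tfrac{2}{\pi}\cos\!\big(\tfrac{\pi\theta}{2}\big)$ with $\theta=w_E-n\to 0$ as $\sigma\to\sigma_0$, so in the limit $v(\sigma_0)\cdot w_E\le-\tfrac{2}{\pi}<0$. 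As this pairing is nonzero we get $v(\sigma_0)\notin\ker\nabla w_E(\sigma_0)$, and a fortiori $v(\sigma_0)\notin T_{\sigma_0}W_\Z$, which is exactly transversality.

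The main obstacle is the continuity of $\epsilon$ at the boundary. In contrast to $\Theta$ and $\zeta$, it is a discrete invariant defined through the membership $v(A_+)\in\CC^{\epsilon}$, and at a boundary point the extremal HN factors collide, so the value produced by this formula at $\sigma_0$ need not be the one approached from $U$. The point of the argument above is that we do not extend $v$ by the boundary formula but by the $U$-side limit, and that the local wall analysis guarantees this limit exists because $U$ is locally a single chamber along $W_\Z$.
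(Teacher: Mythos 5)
Your proof is correct and follows essentially the same route as the paper: continuity of the extension comes from the fact that the phases $\phi(A_\pm)$ extend continuously to the wall (you add a more explicit, and welcome, justification that the sign $\epsilon$ stabilises near the boundary using Lemma \ref{lem:epsilonlocalconstant} together with the local two-chamber structure from Prop.\ \ref{lem:integerwalls}), and transversality rests on the derivative estimate of Lemma \ref{lem:decreasingwidth}, exactly as the paper invokes "the same considerations as used in the proof of Lemma \ref{lem:decreasingwidth}". The only cosmetic difference is that you test transversality against the level set of $w_E$ rather than against the wall equation $\Im Z(A_+)/Z(A_-)=0$; these are locally the same hypersurface, so nothing changes.
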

\begin{proof}
While the objects $A_+$ and $A_-$ may jump
on the integral wall $w^{-1}(n)$, their phases extend continuously from $U$,
becoming equal on the wall.
Thus  we can extend equations \eqref{eq:defzeta} and \eqref{eq:defv} continuously to $\overline U$.
By the proof of Lemma \ref{lem:integerwalls}, the equation of the wall is locally given by \[\Im Z(A_+)/Z(A_-) = 0,\] where $A_\pm$ are the objects defined by the stability conditions just off the wall. 
Since at the wall, $\zeta$ becomes orthogonal to the ray spanned by $Z(A_\pm)$, it follows from
the same considerations used in the proof of Lemma \ref{lem:decreasingwidth} that
the derivative of the equation with respect to $v(\sigma)$ does not vanish.
In other words, the vector field is transversal to the wall.
\end{proof}

We will also use the following consequence:
\begin{lemma} \label{lem:flowatwall}
With the same notation as in the previous lemma, let $W \subset w_E^{-1}(n)$ be an integral wall
bordering $U$, and let $\sigma\in  W$. 
Then there is a neighborhood $V \subset \overline{U}$ of $\sigma$ with a homeomorphism
\[
V \to \left(V \cap W\right) \times [0, \epsilon)
\]
that identifies the flow on $V$ with the flow on the right-hand side induced by $-\dbydt{}$ on $[0,\epsilon)$.
\end{lemma}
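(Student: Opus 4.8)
The plan is to prove a flow-box (straightening) statement adapted to the boundary wall $W$, carried out purely topologically. Since the vector field $v(\sigma)$ is only continuous and locally Lipschitz (Lemma \ref{lemma:flowlocal}) rather than smooth, the classical straightening theorem for smooth fields is not directly available; but only a homeomorphism is claimed, and this can be obtained from uniqueness and continuous dependence of the flow together with the transversality established in Lemma \ref{lem:extends}. The claimed homeomorphism $V \to (V\cap W)\times[0,\epsilon)$ will be the \emph{first-hitting-time map} of $W$, and I would produce its inverse by flowing backward off of $W$.

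Concretely, by Lemma \ref{lem:extends} the field $v$ extends continuously to $\overline U$ and is transversal to $W$, with $-v$ pointing strictly into $U$ along $W$ (the forward flow decreases $w_E$ toward $n$). By Picard--Lindel\"of the backward flow $\psi^s$, solving $\dbydt{}\Omega = -v(\sigma)$ with $\psi^0 = \id$, exists and is unique for small $s$, depends continuously on the pair $(w,s)$, stays in $\overline U$, and lies in $U$ for $s>0$. For a sufficiently small ball $V\cap W\subset W$ around $\sigma$ and small $\epsilon>0$ I would set $\Phi(w,s)=\psi^s(w)$, a continuous map $(V\cap W)\times[0,\epsilon)\to\overline U$ with $\Phi(w,0)=w$. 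In the other direction, for $\tau$ near $\sigma$ write $g_\tau(t)=w_E(\sigma_\tau(t))$ along the forward flow $\sigma_\tau$. By Lemma \ref{lem:decreasingwidth}, $g_\tau$ is strictly decreasing with
\[
-\dbydt{}\, g_\tau(t)\ \geq\ \frac{2}{\pi}\cos\!\left(\frac{\pi\theta}{2}\right),\qquad \theta = w_E-n,
\]
and since $\theta\to 0$ as one approaches $W$, the rate of decrease is bounded below by a positive constant near the wall. Hence there is a unique hitting time $s(\tau)\geq 0$ with $g_\tau(s(\tau))=n$, and this uniform rate bound, combined with continuity of $w_E$ and continuous dependence of the flow, makes $\tau\mapsto s(\tau)$ continuous; setting $w(\tau)=\sigma_\tau(s(\tau))\in W$ gives a continuous map $H(\tau)=(w(\tau),s(\tau))$.

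By uniqueness of flows, $H$ and $\Phi$ are two-sided inverses, so each is a homeomorphism, and $V:=\Phi\big((V\cap W)\times[0,\epsilon)\big)$ is a neighborhood of $\sigma=\Phi(\sigma,0)$ in $\overline U$. The required conjugacy is then immediate from the semigroup property: the forward flow of $v$ for time $t$ sends $\tau$ to $\sigma_\tau(t)$, and $s(\sigma_\tau(t))=s(\tau)-t$ while $w(\sigma_\tau(t))=w(\tau)$, so in the coordinates of $H$ the flow is exactly that of $-\dbydt{}$ on the $[0,\epsilon)$-factor.

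The main obstacle is precisely the low regularity: one cannot invoke the smooth flow-box theorem and must control everything topologically. The delicate point is the continuity of the hitting time $s(\tau)$ as $\tau$ approaches $W$, where $s(\tau)\to 0$; here the implicit function theorem is unavailable, and its role is played instead by the \emph{monotonicity-with-rate} of $w_E$ along the flow coming from Lemma \ref{lem:decreasingwidth} (note $\cos(\pi\theta/2)\to 1$ as $\theta\to 0$), together with the transversality of $v$ to $W$ from Lemma \ref{lem:extends}, which guarantees that the flow genuinely crosses $W$ rather than merely grazing it.
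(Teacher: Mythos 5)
Your proposal is correct and follows essentially the same route as the paper: the paper also parametrizes a neighbourhood of $\sigma$ in $\overline{U}$ by flowing backward (along $-v$) off a small patch $V'\subset W$ for times in $[0,\epsilon)$, invoking transversality and local uniqueness of the flow to conclude this is an injective local homeomorphism whose image is the desired $V$. Your additional work constructing the explicit inverse via the first-hitting-time map and checking its continuity using the rate bound from Lemma \ref{lem:decreasingwidth} is a correct elaboration of details the paper leaves implicit.
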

\begin{proof}
We choose a neighborhood $V'\subset W$ of $\sigma$  small enough such that the inverse flow, associated
to $-v$, exists until time $\epsilon$ for all $\sigma' \in V'$. Since $v$ is transversal to the
wall, and since the flow is locally unique, the flow induces an injective local homeomorphism
\[
V' \times [0, \epsilon) \into \overline{U}.
\]
The result follows by taking $V$ to be the image of this map.
\end{proof}


\section{Avoiding holes}
\label{sec:holes}

We continue to assume that $X$ is a K3 surface of Picard rank $\rho(X) = 1$, and that $E$ is a
(semi)rigid object with $\Hom_X(E, E) = \C$.  In this section we will prove that the flow constructed in
the previous section cannot fall down any of the holes $\Q^+(X)\setminus \Q^+_0(X)$. More precisely,
suppose, as before, that the vector field $v(\sigma)$ gives rise to a  flow \eqref{flow} defined on an interval
$[0,t_0)$.  As we proved in the last section, the underlying flow of central charges $\Omega(t)$
extends to a flow \[\Omega(t)\colon [0,t_0] \lra \Q^+(X).\] In this section we prove that in fact
$\Omega_0=\Omega(t_0)\in  \Q^+_0(X)$.

\begin{lemma} \label{lem:zetacone}
There exists $0< t_1<t_0$ with the following property: for all $t \in [t_1, t_0]$,
the vector $\zeta(t)$ lies in the interior of the convex cone spanned by
$Z(A_+)$ and $-Z(A_-)$ at time $t = t_1$.
\end{lemma}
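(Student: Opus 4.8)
The plan is to reduce everything to the arguments of the central charges $Z(A_\pm)$. Along the flow I set $\varphi_\pm(t) := \phi(A_\pm)$ and $\theta(t) := w_E(\sigma(t)) - n = \varphi_+(t) - \varphi_-(t) \in (0,1)$, noting that $n = \lfloor w_E\rfloor$ is constant on $w_E^{-1}(n,n+1)$. Since $A_\pm$ are semistable of phases $\varphi_\pm$, we have $Z(A_\pm) \in \R_{>0}\cdot e^{i\pi\varphi_\pm}$, so the two boundary rays of the cone spanned by $Z(A_+)$ and $-Z(A_-)$ point in the directions $e^{i\pi\varphi_+}$ and $e^{i\pi(\varphi_-+1)}$; its opening angle is $\pi(\varphi_-+1) - \pi\varphi_+ = \pi(1-\theta) \in (0,\pi)$, so it is a proper convex cone. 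Rewriting \eqref{eq:defzeta} as $\zeta = \exp\bigl(\tfrac{i\pi}{2}(\varphi_+ + \varphi_- + 1)\bigr)$, its argument is precisely the average of the two boundary arguments. Hence at every time $\zeta(t)$ points along the angle bisector of the cone spanned by $Z(A_+)(t)$ and $-Z(A_-)(t)$, sitting at angular distance $\tfrac{\pi}{2}(1-\theta(t))$ from each of its two boundary rays.

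Writing $\psi(t) := \tfrac{\pi}{2}(\varphi_+(t) + \varphi_-(t) + 1)$ for the argument of $\zeta(t)$, the cone fixed at time $t_1$ is the open sector centred at $\psi(t_1)$ of half-width $\tfrac{\pi}{2}(1-\theta(t_1))$, so the assertion is exactly that
\[ |\psi(t) - \psi(t_1)| < \tfrac{\pi}{2}\bigl(1 - \theta(t_1)\bigr) \qquad \text{for all } t \in [t_1, t_0]. \]
By Lemma \ref{lem:decreasingwidth} the function $\varphi_+$ is non-increasing and $\varphi_-$ is non-decreasing along the flow; being bounded they have limits as $t \to t_0$, and I extend them to $[0,t_0]$ by these limits. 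For $t \in [t_1,t_0]$ monotonicity gives $\varphi_+(t_1) - \varphi_+(t) \in [0,a]$ and $\varphi_-(t) - \varphi_-(t_1) \in [0,b]$, where $a := \varphi_+(t_1) - \varphi_+(t_0) \ge 0$ and $b := \varphi_-(t_0) - \varphi_-(t_1) \ge 0$. Therefore $\psi(t) - \psi(t_1) \in [-\tfrac{\pi}{2}a, \tfrac{\pi}{2}b]$, and since $a + b = \theta(t_1) - \theta(t_0)$ I obtain
\[ |\psi(t) - \psi(t_1)| \le \tfrac{\pi}{2}(a+b) = \tfrac{\pi}{2}\bigl(\theta(t_1) - \theta(t_0)\bigr). \]

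It then suffices to choose $t_1$ so that $\theta(t_1) - \theta(t_0) < 1 - \theta(t_1)$. Since $\theta$ is continuous and decreasing on $[0,t_0)$ with values in $(0,1)$, the limit $\theta(t_0) \in [0,1)$ exists and $\theta(t_1) \to \theta(t_0)$ as $t_1 \to t_0$; hence $\bigl(\theta(t_1) - \theta(t_0)\bigr) - \bigl(1 - \theta(t_1)\bigr) \to \theta(t_0) - 1 < 0$, so any $t_1 < t_0$ sufficiently close to $t_0$ works and yields the required strict inequality on all of $[t_1,t_0]$. The computation is elementary trigonometry once the monotonicity is in hand; the point I expect to require the most care — and the only genuine subtlety — is that the objects $A_\pm$ may jump as the flow crosses $(\pm)$-walls inside $w_E^{-1}(n,n+1)$, so the whole argument rests on the fact, guaranteed by Lemma \ref{lem:decreasingwidth} together with the continuity of $\phi^\pm(E)$, that the phase functions $\varphi_\pm$ are themselves continuous and monotone along the entire flow line even where the underlying objects change.
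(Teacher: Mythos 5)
Your proof is correct and follows essentially the same route as the paper's: identify $\zeta(t)$ with the bisector of the cone at time $t$, invoke the monotonicity and boundedness of $\phi(A_\pm)$ from Lemma \ref{lem:decreasingwidth} to extend them continuously to $[0,t_0]$, and then take $t_1$ close enough to $t_0$. The only difference is that where the paper ends with ``the claim then follows by continuity,'' you make that step explicit via the estimate $|\psi(t)-\psi(t_1)|\le\tfrac{\pi}{2}\bigl(\theta(t_1)-\theta(t_0)\bigr)$ and the criterion $\theta(t_1)-\theta(t_0)<1-\theta(t_1)$, which is a welcome clarification.
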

\begin{proof}
We write $\tilde{\phi}^\pm(t)$ for the phases $\phi(A_\pm)$ as a function of $t$, and recall that
$\tilde{\phi}^+ < \tilde{\phi}^- + 1$. We 
have to show that $t_1$ can be chosen so that for all $t\in [t_1,t_0]$ an appropriate branch of\[\psi(t) := \frac 1\pi \arg \zeta(t)\] lies strictly between
$\tilde{\phi}^+(t_1)$ and
$\tilde{\phi}^-(t_1) + 1$, see also Figure \ref{fig:zetacone}.
By Lemma \ref{lem:decreasingwidth}, the functions $\tilde{\phi}^+(t)$ and $\tilde{\phi}^-(t)$ are bounded monotone decreasing
and increasing, respectively, and thus extend to continuous functions on $[0, t_0]$.
By definition of $\zeta(t)$, we have
$$  \label{psi} \tilde{\phi}^+(t) < \psi(t) = \half\big( {\tilde{\phi}^+(t) + \tilde{\phi}^-(t) + 1}\big)
< \tilde{\phi}^-(t) + 1$$
for all $ 0 \le t \le t_0$. The claim then follows by continuity.
\end{proof}

\begin{figure}
\definecolor{qqwuqq}{rgb}{0,0.39,0}
\definecolor{uququq}{rgb}{0.25,0.25,0.25}
\begin{tikzpicture}[line cap=round,line join=round,>=triangle 45,x=1.0cm,y=1.0cm]
\clip(-3,-1.5) rectangle (2.7,2.7);
\draw [shift={(0,0)},color=qqwuqq,fill=qqwuqq,fill opacity=0.1] (0,0) -- (158.84:1) arc
(158.84:218.66:1) -- cycle;
\draw [->] (0,0) -- (-2.48,0.96);
\draw [->] (0,0) -- (2,1.6);
\draw [line width=0.4pt,dash pattern=on 2pt off 2pt,domain=-3:2.7] plot(\x,{(-0-1.6*\x)/-2});
\draw [->] (0,0) -- (-2.02,-0.46);
\fill [color=uququq] (0,0) circle (1.5pt);
\fill [color=black] (-2.48,0.96) circle (1.5pt);
\draw[color=black] (-2.38,1.26) node {$Z(A_+)$};
\fill [color=black] (2,1.6) circle (1.5pt);
\draw[color=black] (1.62,1.90) node {$Z(A_-)$};
\fill [color=black] (-2.02,-0.46) circle (1.5pt);
\draw[color=black] (-1.75,-0.18) node {$\zeta(t)$};
\end{tikzpicture}
\caption{Restraining $\zeta(t)$ as $t$ approaches a boundary point} \label{fig:zetacone}
\end{figure}
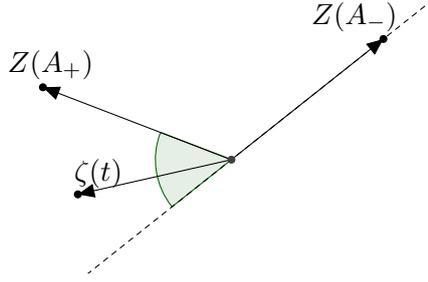

 Assume for a contradiction that $(\Omega_0, \delta) = 0$ for some root
$\delta \in \Delta(X)$. Note that $\delta$ is then uniquely defined up to sign.
 Write $Z_0$ for the central charge
$Z(\blank) = (\Omega_0, \blank)$. We will need to consider small neighborhoods of $Z_0$;
to this end, we choose a norm $\| \cdot \|$ on $\N(X) \otimes \R$, and also write $\|\cdot\|$ to
denote the induced operator norm on $\Hom_\Z(\N(X), \C)$. We first observe that such $Z$ satisfies
the support property for all classes other than $\delta$:
\begin{lemma} \label{lem:supppropertynotdelta}
There exist constants $\epsilon, K \in \R_{>0}$ such that for all $v \in \N(X)$ with $(v, v) \ge -2$ 
and $v \neq \pm \delta$, and all $Z \in \Hom(\N(X), \C)$ we have
\begin{equation} \label{eq:supportnotdelta}
\|Z - Z_0 \| < \epsilon \quad \Longrightarrow \quad \abs{Z(v)} \ge K \| v \|.
\end{equation}
\end{lemma}
\begin{proof}
The proof of the support property for $\Stab(X)$, \cite[Lemma
8.1]{Bridgeland:K3}, applies identically.
\end{proof}

\begin{lemma} \label{lem:small}
Consider a sufficiently small open neighbourhood $Z_0\in V\subset \P^+(X)$, and let $U\subset \St(X)$ be a
connected component of $\pi^{-1}(V\cap \P^+_0(X))$.  Then there is an object $S$ of class $v(S) = \delta$ that
is stable for all stability conditions in $U$.
\end{lemma}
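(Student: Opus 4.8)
The plan is to produce a single spherical object $S$ with $v(S)=\delta$ that is stable at \emph{one} point of $U$, and then to show that the class $\delta$ admits no walls inside $U$, so that $S$ stays stable throughout the connected set $U$. Replacing $\delta$ by $-\delta$ if necessary (and shifting the resulting object by $[1]$ at the very end, which does not affect stability), I may assume $\delta\in\Delta^+(X)$, so that there is a spherical vector bundle $S_\delta$ with $v(S_\delta)=\delta$. Since $\Omega_0\in\Q^+(X)$ is reduced, throughout $U$ I use the orthogonal decomposition of $\N(X)\tensor\R$ into $\Re\Omega,\Im\Omega,\Theta$ (with $(\Re\Omega,\Re\Omega)=(\Im\Omega,\Im\Omega)=-(\Theta,\Theta)=d$) and the identity $(\Theta,v)^2=|Z(v)|^2-d(v,v)$ from the proof of Lemma~\ref{lem:decreasingwidth}. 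Set $\eta=\sup_{\sigma\in U}|Z_\sigma(\delta)|$; since $Z_0(\delta)=0$, this tends to $0$ as the neighbourhood $V$ shrinks, and I take $V$ small enough that $\eta^2<d/2$.

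The technical heart is to rule out strict semistability of the class $\delta$ in $U$. Suppose $F$ with $v(F)=\delta$ were strictly $\sigma$-semistable for some $\sigma\in U$. Its Jordan--H\"older factors are $\sigma$-stable of one common phase, and since their central charges lie on a common ray their moduli add up to $|Z_\sigma(\delta)|<\eta$; hence each factor class $v$ satisfies $|Z_\sigma(v)|<\eta$. If such a class had $(v,v)\ge 0$, the identity above would force $(\Re\Omega,v),(\Im\Omega,v),(\Theta,v)$ all to be $O(\eta)$, making $v$ too short to be a nonzero lattice vector; thus every factor class has $(v,v)<0$, and stability gives $(v,v)\ge -2$, while evenness of the Mukai form forces $(v,v)=-2$, so every factor is spherical. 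By Lemma~\ref{z} two distinct such classes cannot lie in the same component of $\CC$, so at most two classes $\gamma^+\in\CC^+$ and $\gamma^-\in\CC^-$ occur. If only one occurs, $\delta$ is a positive multiple of it and primitivity of the $(-2)$-class $\delta$ forces $F$ to be stable, a contradiction. If both occur, writing $a=|Z_\sigma(\gamma^+)|,\ b=|Z_\sigma(\gamma^-)|<\eta$ and using the signs $(\Theta,\gamma^+)=-\sqrt{a^2+2d}$, $(\Theta,\gamma^-)=+\sqrt{b^2+2d}$ from Lemma~\ref{z} together with $\Re\bigl(Z_\sigma(\gamma^+)\overline{Z_\sigma(\gamma^-)}\bigr)=ab$, one computes
\[
\dim_\C\Hom^1_X(S^+,S^-)=(\gamma^+,\gamma^-)=\frac1d\Bigl(ab+\sqrt{(a^2+2d)(b^2+2d)}\Bigr).
\]
As $0<a,b<\eta$ and $\eta^2<d/2$, the right-hand side lies strictly between $2$ and $3$, contradicting integrality. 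Hence every $\sigma$-semistable object of class $\delta$ with $\sigma\in U$ is automatically $\sigma$-stable.

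It remains to exhibit a stable object of class $\delta$ at one point of $U$. The $\sigma_1$-stable factors of $S_\delta$ are rigid by Lemma~\ref{stab}, hence spherical, so stable spherical objects exist at any $\sigma_1\in U$; moreover the only $(-2)$-classes with $|Z_{\sigma_1}|<\eta$ are $\pm\delta$, since any such class lies in a bounded region by the identity above and therefore has $|Z_0|$ small, forcing it into $\ker Z_0\cap\N(X)=\Z\delta$. To see that $\pm\delta$ is represented by a semistable object I invoke Theorem~\ref{bridge}: the wall $\delta^\perp$ is removed from $\P^+_0(X)$, so $\Omega_0$ is genuinely not in the image of the forgetful map, and the only way a reduced stability condition can fail to extend across $\delta^\perp$ is through a semistable object of class $\pm\delta$ whose central charge vanishes in the limit. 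By the previous paragraph such an object is in fact stable; after the possible shift this gives a $\sigma_1$-stable $S$ with $v(S)=\delta$. Finally, the locus in $U$ where $S$ is stable is open by Proposition~\ref{open}, and closed in $U$ because a boundary point would be a point of strict semistability of $\delta$, which we have excluded; as $U$ is connected, $S$ is stable on all of $U$.

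The main obstacle is the existence statement rather than the persistence: ruling out strict semistability is a clean quadratic-form computation, whereas producing the stable object at one point rests on the characterization of why $\delta^\perp$ is a boundary of the reduced stability manifold, that is, on the covering description of Theorem~\ref{bridge}. A related subtlety is that the support-property constant degenerates as $\sigma$ approaches $\delta^\perp$, so all quantitative estimates must be made through the form $(\Theta,v)^2=|Z(v)|^2-d(v,v)$ rather than through a uniform support bound.
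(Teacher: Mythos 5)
Your wall-exclusion argument (the second paragraph) is correct and is in the same spirit as, though sharper than, the paper's: the paper simply observes that any stable factor of a strictly semistable object of class $\delta$ would be spherical with central charge at most $|Z(\delta)|$, while every spherical class $\delta'\neq\pm\delta$ has $|Z(\delta')|$ bounded below near $Z_0$ (finitely many lattice points in the region $(\Theta,v)^2\le |Z(v)|^2+2d$), giving an immediate contradiction. Your computation that $(\gamma^+,\gamma^-)=\tfrac1d\bigl(ab+\sqrt{(a^2+2d)(b^2+2d)}\bigr)\in(2,3)$ is a pleasant alternative, and the open-closed-connected conclusion at the end is fine.

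The genuine gap is in the existence step. The paper disposes of it by invoking the known fact that for a stability condition not on any wall for $\delta$ there is a (unique, up to shift) stable object of class $\delta$ — a nontrivial external input (existence of stable objects in spherical classes, due to Yoshioka, Mukai et al.\ and its extension to Bridgeland stability). You replace this by the assertion that ``the only way a reduced stability condition can fail to extend across $\delta^\perp$ is through a semistable object of class $\pm\delta$ whose central charge vanishes in the limit.'' This is precisely the nontrivial content and is neither proved nor citable as stated. What Theorem \ref{bridge} and Bridgeland's deformation theorem actually give is that, along a lifted path $\sigma(t)$ with $\Omega(t)\to\Omega_0$, the support constant must degenerate, i.e.\ there are $\sigma(t_n)$-semistable objects $E_n$ with $|Z(E_n)|/\|v(E_n)\|\to 0$. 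To conclude that some object of class $\pm\delta$ becomes semistable you must still argue that the stable factors of such $E_n$ have Mukai vectors confined to a fixed finite set (via $(\Theta,v)^2\le|Z(v)|^2+2d$), that all of these except $\pm\delta$ have $|Z|$ uniformly bounded below on $U$, and hence that the degeneration forces a stable factor of class exactly $\pm\delta$. This chain of reasoning can be carried out with the estimates you already have in hand, but as written it is missing; without it the proof does not produce the object $S$ whose stability you then propagate over $U$.
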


\begin{proof}
We first claim that if $V$ is sufficiently small, 
then its preimage does not intersect any walls at which a stable
object $S$ of class $\delta$  becomes strictly semistable. Note that any such object $S$ is necessarily rigid.

Let $A$ be a stable factor of $S$ on such a wall. It follows that $\abs{Z(S)}  >
\abs{Z(A)}$. By Lemma
\ref{stab}(a), the object $A$ is rigid, so $v=v(A)$  satisfies the assumptions in Lemma
\ref{lem:supppropertynotdelta}.  In particular, the set of possible $\abs{Z(A)}$ is bounded below. But by 
making $V$ sufficiently small we can  bound $\abs{Z(S)}= \abs{Z(\delta)}  $ from above by an arbitrarily small
positive constant. This proves the claim.

It remains to show that there does exist a stable object of class $\delta$ at some point of $U$.
This is implicit in Theorem \ref{bridge}: otherwise, we could deform stability conditions in $U$
to have $Z_0$ as central charge.\footnote{We can make the logic, implicitly contained in
\cite{Bridgeland:K3}, more explicit as follows. Up to shift, the action of $\Aut_0 \D^b(X)$, and 
the choice of point in $U$, we may assume that $\sigma = (Z, \P) \in U$ is a geometric stability
condition with $Z(v) = -1$ and $\Re Z(\delta) > 0$. Then the slope-stable spherical sheaf of class
$\delta$ (which exists by \cite[Theorem 0.1]{Yoshioka:Irreducibility}) is automatically
$\sigma$-stable by the construction of geometric stability conditions.}
\end{proof}

For general reasons (see \cite[Prop. 9.3]{Bridgeland:K3}), any compact subset of $\Stab(X)$ meets only finitely many walls for our fixed object $E$. Now, however, we need a similar result for certain non-compact subsets of $\Stab(X)$ whose images in $\Hom_\Z(\N(X),\C)$ contain  $Z_0$ in their closure. 

\begin{lemma} \label{lem:finitewalls}
Let $[\phi_1, \phi_2]$ be an interval of length less than $2$, and $\|\cdot \|$ some norm
on $\N(X) \otimes \R$. Then there exists $\epsilon > 0$ with
the following property: if $V_{\epsilon, \phi_1, \phi_2} \subset \P^+_0(X)$ is the subset of central
charges $Z$ satisfying
\[ \| Z-Z_0 \| \le \epsilon \quad \text{and} \quad
Z(\delta) \in \R_{>0} \cdot e^{i\pi\phi} \text{ for some $\phi \in [\phi_1, \phi_2]$}, \]
and if $U_{\epsilon, \phi_1, \phi_2}\subset \Stab^*(X)$ is any connected component of the preimage of $V_{\epsilon,
\phi_1, \phi_2}$, then there are only finitely many $(\pm)$ or integral walls intersecting
$U_{\epsilon, \phi_1, \phi_2}$.

Moreover, if we choose $\epsilon$ small enough, then
$Z(\delta)$ is aligned with $Z(A_+)$ or $Z(A_-)$, respectively,  along any
$(\pm)$ wall contained in $U_{\epsilon, \phi_1, \phi_2}$.
\end{lemma}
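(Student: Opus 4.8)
The plan is to argue by contradiction. Suppose infinitely many walls meet $U := U_{\epsilon, \phi_1, \phi_2}$. By \cite[Prop.~9.3]{Bridgeland:K3} only finitely many walls for $E$ meet any compact subset of $\Stab(X)$, so the walls can only accumulate at the non-compact end of $U$, namely where $Z(\sigma) \to Z_0$ and the support property degenerates (because $Z(\delta) \to 0$). The first thing I would record is that the base region $V = V_{\epsilon, \phi_1, \phi_2}$ is simply connected: for $\epsilon$ small it is a ball around $Z_0$ from which only the hyperplane $\delta^\perp$ is deleted, intersected with a sector of total angle $\pi(\phi_2 - \phi_1) < 2\pi$ in the $Z(\delta)$-coordinate. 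Hence the covering of Theorem \ref{bridge} is trivial over $V$, the projection $U \to V$ is a homeomorphism, and — using Lemma \ref{lem:small} to produce the single stable object $S$ with $v(S) = \delta$ on all of $U$ — the phase $\phi_\sigma(S)$ is a well-defined continuous function on $U$ taking values in the fixed bounded interval $[\phi_1, \phi_2]$. Thus an infinite family of walls produces points $\sigma_m$ on distinct walls with $Z(\sigma_m) \to Z_0$; by Prop.~\ref{cases} each carries a stable factor $T_m$ of $E_+$ or $E_-$, and (since only finitely many walls are possible if all the occurring classes stay bounded) we may assume $\|v(T_m)\| \to \infty$, where $v_m := v(T_m)$.

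Next I would extract two a priori estimates. Using the identity $|Z_\sigma(v)|^2 = (\Theta_\sigma, v)^2 + d(v,v)$ established in the proof of Lemma \ref{lem:decreasingwidth}, together with the reverse Cauchy--Schwarz inequality in the hyperbolic model $\HH \subset \CC^+$: since $\Theta_{\sigma_m} \to \Theta_0 \in \HH$ (here $\Theta_0$ spans the negative line $\Omega_0^\perp$, so $\Theta_0$ is proportional to $\delta$) and $(v_m, v_m) \in \{0, -2\}$, the growth $\|v_m\| \to \infty$ is equivalent to $(\Theta_{\sigma_m}, v_m) \to \infty$, hence to $(\delta, v_m) \to \pm\infty$. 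The same computation shows that $\pm\delta$ are the only Mukai vectors of stable objects whose central charge can tend to $0$, so every stable factor occurring at a wall in $U$ other than $S$ has $|Z_\sigma| \ge c > 0$ uniformly. The second estimate is a bound on the width $w_E$ on $U$. The point is that $\delta^\perp$ is positive definite, since $\delta$ spans a negative line; so any nonzero $v \in \delta^\perp$ has $(v,v) > 0$. If $\phi(E_+)$ exceeded $\phi_\sigma(S)$ by more than a constant $K_0$ depending only on the fixed cohomological amplitude of $E$, then all $\Hom^i_X(E_+, S)$ would vanish: semistability confines nonvanishing to the window $\phi(E_+) - \phi_\sigma(S) \le i \le \phi(E_+) - \phi_\sigma(S) + 2$, while the bounded amplitude of $E_+$ confines it to a fixed range of degrees, and for $\phi(E_+) - \phi_\sigma(S)$ large these are disjoint. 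Then $(v(E_+), \delta) = -\chi(E_+, S) = 0$, so $v(E_+) \in \delta^\perp$; but $v(E_+) \in \CC$ is (semi)rigid, contradicting positive-definiteness of $\delta^\perp$ via Lemma \ref{z}. The symmetric argument bounds $\phi_\sigma(S) - \phi(E_-)$, so $w_E$ is bounded on $U$.

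The crux is to turn these estimates into a contradiction. From $(\delta, v_m) \to \pm\infty$ we get $\chi(S, T_m) = -(\delta, v_m) \to \mp\infty$, hence $\Hom^1_X(S, T_m) \neq 0$ for $m \gg 0$, which by the phase inequalities forces $|\phi_\sigma(T_m) - \phi_\sigma(S)| \le 1$; since $T_m$ is an extremal Harder--Narasimhan factor, its phase equals $\phi(E_+)$ or $\phi(E_-)$. The step I expect to require the most care is to show that the pair of extremal factors cannot both remain within bounded phase-distance of $\phi_\sigma(S)$ once $\|v_m\| \to \infty$: the growing $|Z_\sigma(T_m)|$ (with $|Z_\sigma(E)|$ fixed near $|Z_0(E)|$) forces the extremal factors into genuine cancellation across a phase span that, combined with the width bound and the uniform lower bound $|Z_\sigma| \ge c$ on all non-$S$ factors, ultimately pushes $\phi_\sigma(S)$ outside the fixed interval $[\phi_1, \phi_2]$ — the desired contradiction. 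Reconciling an unbounded stable factor at the degenerate boundary with the pinned phase of $S$ is the delicate point, and is where the hypothesis that $[\phi_1, \phi_2]$ has length strictly less than $2$ (so that $V$ subtends less than one full monodromy loop around $\delta^\perp$) is essential.
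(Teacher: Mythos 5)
Your proposal sets up the right contradiction scenario (infinitely many walls force stable factors $T_m$ of $E$ with $\|v(T_m)\|\to\infty$ accumulating at $Z_0$), and several of your intermediate observations are sound, but the argument breaks down exactly at the step you yourself flag as delicate. The danger to be excluded is that, as $Z\to Z_0$, the HN filtration of $E$ acquires more and more factors with large classes whose central charges cancel; nothing you write explains why this cancellation would "push $\phi_\sigma(S)$ outside $[\phi_1,\phi_2]$". The phase of $S$ is pinned near $\arg Z(\delta)/\pi$ by the very definition of $V_{\epsilon,\phi_1,\phi_2}$, while the cancellation takes place among HN factors of $E$ spread across the width interval; there is no visible mechanism transferring one to the other. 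The chain $(\delta,v_m)\to\pm\infty \Rightarrow \Hom^1(S,T_m)\neq 0 \Rightarrow |\phi(T_m)-\phi(S)|\le 1$ is consistent with the bad scenario rather than contradicting it, so the proof as written does not close.

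The paper's proof avoids this entirely by establishing a quantitative bound on the \emph{mass} $m_E(\sigma)=\sum_i|Z(A_i)|$ over all of $U_{\epsilon,\phi_1,\phi_2}$: using the uniform support estimate $|Z(v)|\ge K\|v\|$ for $(v,v)\ge -2$, $v\neq\pm\delta$, one shows that crossing a chamber changes the mass by at most a factor $e^{\|Z_2-Z_1\|/K}$ provided $|Z(\delta)|$ decreases along the path, and every point of $U$ is reached from the compact core $U_N$ by such a path of length $\le 2\epsilon$. Boundedness of the mass immediately bounds the set of Mukai vectors of stable factors, hence the set of walls. This exponential mass estimate is the missing ingredient in your approach; without it (or a substitute), the accumulation of large classes at the degenerate boundary is not ruled out. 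Two smaller points: the identity $|Z(v)|^2=(\Theta,v)^2+d(v,v)$ you invoke is only valid for reduced central charges ($(\Omega,\Omega)=0$), whereas $V_{\epsilon,\phi_1,\phi_2}$ lives in $\P^+_0(X)$; and your width bound relies on $E_+$ having "fixed cohomological amplitude", which is not a priori uniform as $\sigma$ varies over $U$.
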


\begin{proof}
We choose constants $\epsilon, K$ as in Lemma \ref{lem:supppropertynotdelta}.

Given a stability condition $\sigma$, recall that the \emph{mass} of our object $E$ with respect to
$\sigma$ is defined by
\[
m_E(\sigma) := \sum_i \abs{Z(A_i)}.
\]
This is a continuous function on $\Stab(X)$ (\cite[Proposition
8.1]{Bridgeland:Stab}).
\begin{description}
\item[Claim] The mass $m_E(\sigma)$ is bounded on $U_{\epsilon, \phi_1, \phi_2}$. 
\end{description}

To prove the claim, first note that by compactness, it evidently holds on the intersection $U_N$ of
$U_{\epsilon, \phi_1, \phi_2}$ with the set defined by $\abs{Z(\delta)} \ge \frac 1N $ for
any $N > 0$. We choose $N$ large enough such that $U_N$ contains central charges $Z$ such
that $Z(\delta)$ attains any possible phase $\phi \in [\phi_1, \phi_2]$.
 
Let $\sigma_i = (\P_i, Z_i)\in U_N$
be two stability conditions
that are contained in the same chamber for $E$, and that satisfy
$\abs{Z_2(\delta)} \le \abs{Z_1(\delta)}$. Here, by chamber, we mean a connected component of the complement of all walls in $U_N$ for the object $E$.  Thus the stable factors $A_i$  of $E$ are constant in the
interior of the chamber, and so
\begin{align*}
m_E(\sigma_2) &= \sum_{i=1}^m \abs{Z_2(A_i)} 
\le \sum_i \abs{Z_1(A_i)} + \sum_{i \colon v(A_i) \neq \pm\delta} \abs{(Z_2 - Z_1)(A_i)}  \\
&\le m_E(\sigma_1) + \sum_{i \colon v(A_i) \neq \pm\delta} \|Z_2 - Z_1 \| \cdot \|v(A_i)\|  \\
&\le m_E(\sigma_1) + \|Z_2 - Z_1 \| \cdot \frac 1K m_E(\sigma_1)
\le m_E(\sigma_1) e^{\frac {\|Z_2 - Z_1 \|}K}
\end{align*}
It follows by continuity and induction on the number of chambers traversed that if stability conditions $\sigma_1, \sigma_2\in U_N$ can be
connected by path of length $D$ along which $\abs{Z(\delta)}$ is decreasing, then 
\[
m_E(\sigma_2) \le m_E(\sigma_1) e^{\frac DK}.
\]
Taking the limit as $N\to \infty$, the same result holds for all $\sigma_1, \sigma_2\in U_{\epsilon, \phi_1, \phi_2}$

Let $M$ be the maximum of $m_E$ on the compact set $U_N$. Any point 
$\sigma' \in U_{\epsilon, \phi_1, \phi_2}$ can be reached from a point in $U_N$ along a path of bounded length
$\le 2\epsilon$ along which $\abs{Z(\delta)}$ is decreasing: indeed, the subset of $U_{\epsilon, \phi_1,
\phi_2}$ where $Z(S)$ has constant phase is connected, and (by assumption on $N$) contains a point
$\sigma''$ that is also in $U_N$; then we can just use the straight line segment between the central
charges $Z''$ and $Z'$ of $\sigma''$ and $\sigma'$, respectively. 
It follows that $m_E$ is bounded on
$U_{\epsilon, \phi_1, \phi_2}$ by $M e^{\frac {2\epsilon} K}$, proving the claim.

There are only finitely many $v \in N(X)$ with $(v,v) \ge -2$,
such that $\abs{Z(v)} \le M e^{\frac DK}$ can hold for some $Z \in V_{\epsilon, \phi_1, \phi_2}$.
Therefore, there are only finitely many classes that can
appear as the Mukai vector of a stable factor of $E$ for \emph{any} stability condition in
$U_{\epsilon, \phi_1, \phi_2}$. The loci where pairs of these classes have equal phase defines a
finite set of walls. 

It remains to prove the last statement. Since the number of walls is finite, we can choose
$\epsilon$ small enough such that the image of each wall in $V_{\epsilon, \phi_1, \phi_2}$
contains $Z_0$ in its closure. The wall is locally defined by $Z(S_1)/Z(S_2) \in \R$ for two
stable (semi)rigid facts $S_1, S_2$ of $A_\pm$. This is equivalent to the condition that the
(always one-dimensional) kernel of $Z$ is contained in the span of $v(S_1), v(S_2)$. If $Z_0$
is contained in its closure, it satisfies the same condition. As the kernel of $Z_0$
is spanned by $\delta$, this means that $\delta$ is a linear combination of the $v(S_i)$;
this proves the claim.
\end{proof}

%

%
%

\begin{prop} \label{prop:flow}
Consider a stability condition $\sigma$ with $w_E(\sigma) \notin \Z$. Then the
flow of the vector field $v(\sigma)$ starting at $\sigma$ ends at a stability condition
of integral width $\lfloor w_E(\sigma) \rfloor$.
\end{prop}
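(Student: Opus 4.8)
The plan is to combine the analysis of the flow from Section~\ref{sec:flow} with the three lemmas of the present section. Recall that at the end of Section~\ref{sec:flow} we produced, for $\sigma$ with $w_E(\sigma)\in(n,n+1)$, a maximal flow on $[0,t_0)$ inside $w_E^{-1}(n,n+1)$, showed that the central charges extend to a continuous path $\Omega(t)\colon[0,t_0]\to\Q^+(X)$ with $\Omega_0:=\Omega(t_0)\in\Q^+(X)$, and observed that \emph{if} $\Omega_0\in\Q^+_0(X)$ then Theorem~\ref{bridge} lifts the path and maximality forces $w_E(\sigma(t_0))=n$. So the entire content of the proposition is to exclude the remaining possibility, namely that $(\Omega_0,\delta)=0$ for some root $\delta\in\Delta(X)$. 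I would therefore begin by assuming this for contradiction, fixing $\delta$ to lie in $\CC^+$, and recording the key consequence that $\delta$ spans the negative-definite line orthogonal to the positive $2$-plane $\langle\Re\Omega_0,\Im\Omega_0\rangle$; hence $\delta$ is proportional to the vector $\Theta_0:=\Theta(\sigma(t_0))$ of \eqref{eq:defTheta}, with $(\Theta_0,\delta)<0$.

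Next I would pin down the limiting behaviour of the extremal factors and of the spherical class. By Lemma~\ref{lem:zetacone} and the computation $\tfrac{d}{dt}Z(S)=\epsilon\,\zeta(t)\,(\Theta(t),\delta)$, the flow eventually enters a region $U_{\epsilon,\phi_1,\phi_2}$ of the type controlled by Lemma~\ref{lem:finitewalls}; that lemma guarantees that only finitely many $(\pm)$- and integral walls meet this region, so the classes $v(A_\pm)=v_\pm$ are eventually constant and the phases $\phi(A_\pm)$ converge to limits $\phi^\pm_0$ with $\phi^+_0-\phi^-_0=:\theta_0\in[0,1)$. By Lemma~\ref{lem:small} there is a single object $S$ with $v(S)=\delta$ that is stable along the flow near $t_0$, and since $(\Omega_0,\delta)=0$ we have $Z(S)\to 0$. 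Using $\Theta_0\parallel\delta$ together with \eqref{eq:defzeta} and Lemma~\ref{lem:zetacone}, the above derivative formula shows that, up to an even shift, the phase of $S$ converges to
\[
\psi=\tfrac12\big(\phi^+_0+\phi^-_0+1\big).
\]
The point of this computation is that $\psi$ lies strictly between $\phi^+_0$ and $\phi^-_0+1$: thus $S$ has phase strictly above $\phi_+(E)=\phi^+_0$, while $S[-1]$ has phase strictly below $\phi_-(E)=\phi^-_0$. (It is here that one also rules out $v_\pm\parallel\delta$: that would force $A_+=S^{\oplus k}$, hence $\phi(S)=\phi(A_+)\to\phi^+_0<\psi$, a contradiction.)

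Finally I would extract the contradiction from this phase placement via the Mukai pairing. Since $E$ is (semi)rigid we have $v(E)\in\CC$, and as $\delta$ is spherical Lemma~\ref{z} gives $(\delta,v(E))\neq0$, so $\chi(S,E)=-(\delta,v(E))\neq0$ and some $\Hom^i_X(S,E)$ is nonzero. On the other hand, the fact that $\psi$ lies strictly outside $[\phi^-_0,\phi^+_0]$ (and within an interval of length~$1$ of both endpoints) restricts, via semistability of the HN factors and Serre duality, the nonvanishing of $\Hom^\bullet_X(S,E)$ to a single narrow band of degrees; combined with $\Hom_X(E,E)=\C$ this should force $S$ (up to shift) to occur as a stable factor of $E$, contradicting $\psi\notin[\phi^-_0,\phi^+_0]$. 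Hence $\Omega_0\in\Q^+_0(X)$, and the reduction from Section~\ref{sec:flow} completes the proof. I expect this last step to be the main obstacle: the Euler-characteristic and phase inequalities are individually easy, but turning the nonzero morphism between $S$ and $E$ into a genuine destabilisation of an extremal factor---rather than a harmless map from a lower-phase to a higher-phase stable object---is the delicate part, and is exactly where the rigidity hypothesis $\Hom_X(E,E)=\C$ and the strict separation of $\psi$ from the phase range of $E$ must be used in tandem.
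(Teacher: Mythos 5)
Your reduction to excluding the case $(\Omega_0,\delta)=0$ is exactly the paper's, and your preliminary observations are sound: $\delta$ is proportional to $\Theta_0$, and the integral $Z(S)(t)=-\int_t^{t_0}\epsilon\,\zeta(s)\,(\Theta(s),\delta)\,ds$ together with Lemma \ref{lem:zetacone} shows that the phase of the stable spherical object $S$ from Lemma \ref{lem:small} tends (up to shift) to $\psi=\tfrac12(\phi^+_0+\phi^-_0+1)$, strictly between $\phi(A_+)$ and $\phi(A_-)+1$. This is precisely the content of the first paragraph of the paper's proof (phrased there as: $Z_1(S)$ lies in the interior of the cone spanned by $-Z_1(A_+)$ and $Z_1(A_-)$, whence $v(S)$ and $v(A_-)$ lie in the same component $\CC^\pm$ and $v(A_+)$ in the opposite one).

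The gap is in your final step, and it is a genuine missing idea rather than a detail. From $\chi(S,E)=-(\delta,v(E))\neq 0$ you get a nonzero $\Hom^i_X(S,E)$, but the phase placement only forces this to live in degrees roughly $1\le i\le n+2$ (a band of width $n+2$, not a "single narrow band" unless $n=0$), and nonzero extension groups in positive degrees between a stable object of phase $\psi>\phi_+(E)$ and $E$ are entirely consistent with the HN structure of $E$: a nonzero element of $\Hom^1_X(S,E)$ or of $\Hom^2_X(S,E)\cong\Hom_X(E,S)^*$ is exactly the "harmless map from lower phase to higher phase" you worry about, and neither $\Hom_X(E,E)=\C$ nor Serre duality upgrades it to a destabilising map. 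There is no contradiction extractable from the limiting configuration alone. The paper's proof is forced to argue globally: using Lemma \ref{lem:finitewalls} it rotates the central charge anticlockwise around the hole inside $U_{\epsilon,\phi_1,\phi_2}$, where the support estimate $\abs{Z'(v)-Z(v)}/\abs{Z(v)}\le 2\epsilon/K$ shows the phase of $Z(S)=Z(\delta)$ winds arbitrarily fast compared with the phases of $Z(A_\pm)$; it rules out meeting an integral wall (via Lemmas \ref{x} and \ref{hein}, since every wall whose closure passes through $Z_0$ must have $Z(S)$ aligned with $Z(A_\pm)$ along it); and at the forced $(-)$-wall where $Z(S)$ aligns with $Z(A_-)$ it derives a sign contradiction in the components $\CC^\pm$, in both the case where $S$ and $A_-$ are simultaneously stable and the case of a short exact sequence $0\to S^{\oplus k}\to A_-\to T\to 0$. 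None of this winding-and-wall-crossing mechanism appears in your proposal, and without it the proof does not close.
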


\begin{proof}
Assume otherwise. By the results of Section \ref{sec:flow}, this means that the path $\Omega(t) \in \Q^+(X)$ leads to a point
$\Omega_0$ as above. By Lemma \ref{lem:small},
there is a spherical object $S$ with $Z_0(S) = 0$ that is $\sigma(t)$-stable for $t$ sufficiently
close to $t_0$. Replacing $S$ by a suitable shift, we may assume that $(\Theta, v(S)) > 0$ along the path.

Take $0<t_1 < t_0$  as in Lemma \ref{lem:zetacone} and put $\sigma(t_1)=(Z_1,\P_1)$.
Under the assumptions,  $Z_0(S)-Z_1(S) = -Z_1(S)$ is an integral of a positive multiple of $\zeta(t)$; the Lemma
thus implies that $Z_1(S)$ lies in the interior of the cone spanned by $-Z_1(A_+)$ and $Z_1(A_-)$, see 
Figure \ref{fig:hole}. Since $Z(S)\to 0$ along the flow, it follows from the definition of the sign $\epsilon(\sigma)$ that $v(A_-)$ and $v(S)$ lie in the same component $\CC^\pm$, and $v(A_+)$ in the opposite one.

\begin{figure}

\newcommand{\degre}{\ensuremath{^\circ}}
\definecolor{qqwuqq}{rgb}{0,0.39,0}
\definecolor{uququq}{rgb}{0.25,0.25,0.25}
\begin{tikzpicture}[line cap=round,line join=round,>=triangle 45,x=1.0cm,y=1.0cm]
\clip(-3.5,-1.3) rectangle (3.5,2.5);
\draw [shift={(0,0)},color=qqwuqq,fill=qqwuqq,fill opacity=0.1] (0,0) -- (-21.16:1.3) arc
(-21.16:38.66:1.3) -- cycle;
\draw [->] (0,0) -- (-2.48,0.96);
\draw [->] (0,0) -- (2,1.6);
\draw [line width=0.4pt,dash pattern=on 2pt off 2pt,domain=-3.5:3.5] plot(\x,{(-0-1.6*\x)/-2});
\draw [line width=0.4pt,dash pattern=on 2pt off 2pt,domain=-3.5:3.5] plot(\x,{(-0-0.96*\x)/2.48});
\draw [->] (0,0) -- (-2.48,0.96);
\draw [->,line width=0.4pt] (0,0) -- (2.4,-0.06);
\fill [color=uququq] (0,0) circle (1.5pt);
\fill [color=black] (-2.48,0.96) circle (1.5pt);
\draw[color=black] (-2.28,1.24) node {$Z(A_+)$};
\fill [color=black] (2,1.6) circle (1.5pt);
\draw[color=black] (1.70,1.95) node {$Z(A_-)$};
\fill [color=uququq] (-2.48,0.96) circle (1.5pt);
\fill [color=black] (2.4,-0.06) circle (1.5pt);
\draw[color=black] (2.54,0.2) node {$Z(S)$};
\end{tikzpicture}

\caption{The situation at a hole} \label{fig:hole}
\end{figure}
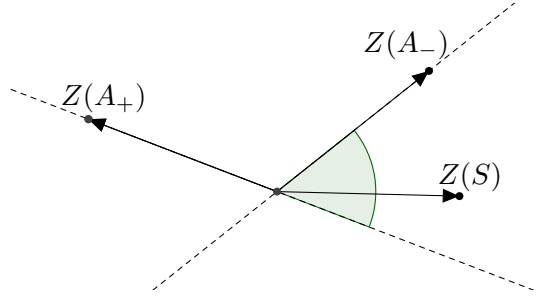

Let the phases of $A_+[-1]$ and $A_-$ in the stability condition $\sigma_1$ be $\phi_1$ and
$\phi_2$, respectively, and let $\epsilon >0$ be sufficiently small such that all conclusions of Lemma
\ref{lem:finitewalls} hold for $V = V_{\epsilon, \phi_1, \phi_2}$. Since $Z(S)\to 0$ under the flow, we can increase $t_1$ if necessary so that for all $t\in [t_1,t_0]$ the central charge of the stability condition $\sigma(t)$ lies in $V$. Thus, choosing the connected component $U=U_{\epsilon,\phi_1,\phi_2}$ appropriately we can assume that $\sigma(t)\in U$ for $t\in [t_1,t_0]$. By Lemma \ref{lem:small} we can also assume that $S$ is stable for all $\sigma\in U$. Finally, replacing $S$ by an even shift, we may  assume that the phase of $S$ in the stability conditon $\sigma_1$ lies in the interval $(\phi_1,\phi_2)$. 

Consider moving $\sigma$ on a path above the open subset $\| Z-Z_0 \| < \epsilon$ starting at $\sigma_1$ by  rotating $Z(S)$ anti-clockwise around the hole. 
The support condition \eqref{eq:supportnotdelta} implies that for $Z, Z' \in
V$ and $(v,v) \ge -2$ with $ v \neq \pm \delta$
\[
\frac{\abs{Z'(v) - Z(v)}}{\abs{Z(v)}} \le \frac{\|Z-Z'\| \|v\|} {K\|v\|} \le 2\epsilon/K.
\]
Thus, after reduing $\epsilon$ further if necessary, the phase of $Z(v)$ is bounded within an arbitrarily small
interval, whereas the phase of $Z(\delta)$ can be increase by an arbitrary amount; in particular,
there is a finite length path such that at the endpoint, the phase of $Z(S)$ aligns with that
of $Z(A_-)$, while it is never aligned with the phase of any of $\pm Z(A_+)$ or $-Z(A_-)$.

Suppose that this path hits an integral wall $\W$.  By the proof of Lemma \ref{wall2}, the stable factors of $A_\pm$ in the stability conditions before we hit the wall are the two stable objects $S_1$, $S_2$ defining the wall, at least one of which is rigid. The last statement of Lemma \ref{lem:finitewalls} shows that $Z(S)$ also aligns with the $Z(S_i)$, and since $S$ is definitely not a stable factor of $A_\pm$, this then contradicts  Lemma \ref{ynew}.

Thus we can prolong our path until  $Z(S)$ aligns with $Z(A_-)$.
There are now
two possibilities: if $S$ and $A_-$ are both stable, then $(v(S), v(A_-)) \ge 0$ implies that $v(S)$ and $v(A_-)$ lie in opposite
components $\CC^\pm$, a contradiction.  Otherwise, if $(v(S), v(A_-)) < 0$, then since
$\Hom_X(A_-, S) = 0$ by stablity we must have $\Hom_X(S, A_-) \neq 0$. Therefore,
we are on a $(-)$-wall. Let $T$ be the Jordan-H\"older factor of $A_-$ that is non-isomorphic
to $S$ (which is unique by Proposition \ref{cases}). After crossing this wall, the last step of the Harder-Narasimhan filtration of $A_-$ induces a surjection
\[A_- \onto T^{\oplus k} \]
for $k = \dim \Hom_X(A_-, T) > 0$, and $T^{\oplus k}$ has become the new $A_-$.  Lemma \ref{cases}
gives $(v(T),v(A_+))> 0$. Since we also have $(v(T),v(S))> 0$ this implies that $v(A_+)$ and $v(S)$ lie in the same component of $\CC^\pm$, which gives another contradiction.   
\end{proof}


\section{Conclusion of the proof} \label{sec:conclusion}
In this section we complete the proofs of our main Theorems. Take assumptions as in the previous sections: thus $X$ is a complex  K3 surface of Picard rank $\rho(X)=1$, and $E\in \D(X)$ is a (semi)rigid object satisfying $\Hom_X(E,E)=\C$.

\subsection*{Retraction onto width 0}
The aim of this section is to combine the flows of Proposition \ref{prop:flow} defined on
$w_E^{-1}(n, n+1)$ for all $n$ to retract $\Stab^*_\red(X)$ onto a subset of the geometric chamber.

We write $W_{\le n}$, $W_{=n}$, $W_{<n}$ and $W_{(n, n+1)}$ for the subsets of $\Stab^*_\red(X)$ defined
by $w_E \le n$, etc.
(Note that $W_\Z$ is a strict subset of  $\cup_{n \in \Z} W_{=n}$, as we make no assumptions
about quasistability of $E$ in $W_{=0}$.)

\begin{lemma}\label{lem:n}
The inclusion $W_{\le n} \subset W_{< n+1}$ is a deformation retract.
\end{lemma}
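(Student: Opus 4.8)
The plan is to assemble a strong deformation retraction of $W_{<n+1}$ onto $W_{\le n}$ directly out of the flow of the vector field $v(\sigma)$ analysed in Sections \ref{sec:flow} and \ref{sec:holes}. The only part of $W_{<n+1}$ not already contained in $W_{\le n}$ is the open region $W_{(n,n+1)}=w_E^{-1}(n,n+1)$, and by Proposition \ref{prop:flow} every $\sigma$ in this region flows, in finite time, to a stability condition of width exactly $n$. Writing $\sigma(t)$ for this flow and $t_0(\sigma)$ for the finite time at which $w_E(\sigma(t_0(\sigma)))=n$, I would define a homotopy $H\colon W_{<n+1}\times[0,1]\to W_{<n+1}$ by
\[
H(\sigma,s)=\begin{cases}\sigma & \text{if }\sigma\in W_{\le n},\\[1mm] \sigma(s\cdot t_0(\sigma)) & \text{if }\sigma\in W_{(n,n+1)}.\end{cases}
\]
Since $w_E$ is strictly decreasing along the flow by Lemma \ref{lem:decreasingwidth}, the width of $\sigma(t)$ stays in $[n,w_E(\sigma)]\subset[n,n+1)$, so $H$ does map into $W_{<n+1}$. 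Moreover $H(\cdot,0)=\id$, the map $H(\cdot,s)$ is the identity on $W_{\le n}$ for every $s$, and $H(\sigma,1)=\sigma(t_0(\sigma))\in W_{=n}\subset W_{\le n}$. Thus $H$ exhibits $W_{\le n}$ as a strong deformation retract of $W_{<n+1}$, provided only that $H$ is continuous; the entire content of the lemma is this continuity.

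Away from the wall $W_{=n}$ continuity is routine. On $W_{\le n}\times[0,1]$ the map $H$ is just the projection $(\sigma,s)\mapsto\sigma$, which is continuous. On $W_{(n,n+1)}\times[0,1]$ I would invoke standard ODE theory: the vector field $v(\sigma)$ is continuous by Lemma \ref{lem:epsilonlocalconstant} and locally Lipschitz by Lemma \ref{lemma:flowlocal}, so $(\sigma,t)\mapsto\sigma(t)$ depends continuously on the initial condition. The reparametrising function $t_0$ is itself continuous there: $w_E(\sigma(t))$ is continuous and strictly decreasing in $t$, and its rate of decrease does not vanish as the flow meets the wall $w_E=n$ by the transversality statement of Lemma \ref{lem:extends}, so $t_0$ is pinned down continuously by a standard straddling argument. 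These two pieces agree only along $W_{=n}$, since continuity of $w_E$ forces $\overline{W_{(n,n+1)}}\cap W_{\le n}\subseteq W_{=n}$, so the gluing problem is concentrated entirely on that wall.

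The main obstacle is therefore continuity along the boundary wall $W_{=n}$, where the flow-defined formula must match the identity. Here I would use the local product structure furnished by Lemma \ref{lem:flowatwall}: for a point $\sigma_0\in W_{=n}$ bordering $W_{(n,n+1)}$ there is a homeomorphism of a neighbourhood $V$ onto $(V\cap W_{=n})\times[0,\epsilon)$ carrying the flow to the downward flow induced by $-\dbydt{}$ on the interval. In these coordinates a point $(\tau,u)$ reaches the wall at time $t_0=u$, so the reparametrised homotopy becomes $H((\tau,u),s)=(\tau,(1-s)u)$, which is manifestly jointly continuous in $(\tau,u,s)$ and restricts on the wall $\{u=0\}$ to the identity $(\tau,0)\mapsto(\tau,0)$. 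This simultaneously shows $t_0(\sigma)\to0$ as $w_E(\sigma)\to n^+$ (consistent with the time estimate following Lemma \ref{lem:decreasingwidth}) and that $H(\sigma,s)\to\sigma_0$ uniformly in $s$ as $\sigma\to\sigma_0$ from $W_{(n,n+1)}$. Patching these local product charts along $W_{=n}$ with the ODE-continuity on the interior yields continuity of $H$ on all of $W_{<n+1}\times[0,1]$, completing the argument.
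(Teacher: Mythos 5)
Your proposal is correct and follows essentially the same route as the paper: flow each point of $W_{(n,n+1)}$ to the wall $W_{=n}$ using Proposition \ref{prop:flow}, and use Lemma \ref{lem:flowatwall} to glue continuously with the identity on $W_{\le n}$. The only difference is cosmetic — you normalise the homotopy by rescaling the total flow time $t_0(\sigma)$, whereas the paper uses the width decrement $w_E(\sigma)-w_E(\mathrm{Flow}(\sigma,t))$ as the homotopy parameter (thereby sidestepping a separate continuity argument for $t_0$) — but both rest on the same ingredients.
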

\begin{proof}
There is a continuous map $W_{(n, n+1)} \to W_{=n}$ sending a stability condition
$\sigma$ with $w_E(\sigma) \in (n, n+1)$ to the endpoint of the flow along the vector field
$v$ starting at $\sigma$.  Lemma \ref{lem:flowatwall} shows that the identity on $W_{\le n}$ extends
this map  to give a continuous retract $r_n \colon W_{< n+1} \to W_{\le n}$. 

A homotopy between the identity on $W_{< n+1}$ and $r_n$ is given by the 
normalized flow map
\[ \mathrm{Flow}' \colon W_{< n+1} \times [0,1] \to W_{\le n}, \] 
for which $\mathrm{Flow}'(\sigma, t)$ is the point on the
flow line of $\sigma$ with width $w_E(\sigma) - t$ if $0 \le t < w_E(\sigma) - n$, and given by 
$r_n(\sigma) \in W_{\le n}$ if $t \ge w_E(\sigma) - n$.

We now make this construction rigorous. Consider the union $U \subset W_{(n, n+1)} \times \R_{\ge 0}$
of all maximal (closed) intervals of definition of the flow of $v(\sigma)$, and let
\[ \mathrm{Flow} \colon U \to W_{[n, n+1)}
\]
be the the induced continuous map: $\mathrm{Flow}(\sigma, t)$ is the position of the flow line starting
at $\sigma$ after time $t$. It follows from construction and
Lemma \ref{lem:decreasingwidth} that the map
\[
\Gamma \colon U \to W_{(n, n+1)} \times [0, 1), \quad
(\sigma, t) \mapsto \bigl(\sigma, w_E(\sigma) - w_E(\mathrm{Flow}(\sigma, t))\bigr) 
\]
is a homeomorphism onto its image
\[
V = \left\{(\sigma, s) \colon 0 \le s \le w_E(\sigma) - n\right\} \subset W_{(n, n+1)} \times [0, 1).
\]
Thus, on $W_{(n, n+1)} \times [0,1]$, we can define $\mathrm{Flow}'$
as follows: first set
$\mathrm{Flow}'|_V = \mathrm{Flow} \circ \Gamma^{-1}$, and then extend it
continuously by $\mathrm{Flow'}(\sigma, s) = r_n(s)$ for $(\sigma, s) \notin V$.
This is a homotopy between the inclusion $W_{(n, n+1)} \into \Stab^*_\red(X)$ and $r_n$.

Another application of Lemma \ref{lem:flowatwall} shows that this homotopy extends continuously 
to $W_{< n+1}$, as desired.
\end{proof}

\begin{lemma}\label{lem:n1}
For each $n \ge 0$, there is a subset $U \subset W_{<n+1}$, containing $W_{\le n}$, such that
$U$ is a deformation retract of $W_{\le n+1}$.
\end{lemma}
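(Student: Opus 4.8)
The plan is to realise $W_{=n+1}$ as the boundary of a manifold with boundary, namely $W_{\le n+1}$, and then to push this boundary a little way into the interior by means of a collar, taking $U$ to be the complement of the resulting open half-collar.

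First I would check that $W_{\le n+1}$ is a smooth manifold with boundary, the boundary being exactly $W_{=n+1}$. Since $n+1\ge 1>0$, any stability condition of width $n+1$ has $E$ non-quasistable, so $W_{=n+1}\subseteq W_\Z$. Proposition \ref{lem:integerwalls} shows that $w_E$ is locally constant along $W_\Z$ (it takes the common boundary value at the dividing wall), so $W_{=n+1}$ is a union of connected components of $W_\Z$ and is therefore, by Lemma \ref{wall2}, a closed codimension-one submanifold of $\St(X)$. Proposition \ref{lem:integerwalls} further shows that near each of its points $W_{=n+1}$ separates $\St(X)$ into the two regions $w_E<n+1$ and $w_E>n+1$; hence $W_{=n+1}$ is a two-sided hypersurface, the interior of $W_{\le n+1}$ is exactly $W_{<n+1}$, and $W_{\le n+1}$ is a manifold with boundary $W_{=n+1}$. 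It is essential here \emph{not} to try to push the boundary inwards using the width-decreasing flow $v(\sigma)$ of Section \ref{sec:flow}: at the wall one has $\phi(A_+)=\phi(A_-)+1$ and hence $\zeta=-e^{i\pi\phi(A_-)}$, and a short computation of $\frac{d}{dt}\phi(A_\pm)$ then gives $\frac{d}{dt}w_E=0$ there, so $v$ is \emph{tangent} to $W_{=n+1}$ (this is precisely why Lemma \ref{lem:extends} asserts transversality only at the lower wall $w_E=n$). I would therefore invoke instead the collar neighbourhood theorem for the manifold with boundary $W_{\le n+1}$ to obtain a collar embedding $c\colon W_{=n+1}\times[0,1)\hookrightarrow W_{\le n+1}$ onto an open neighbourhood of $W_{=n+1}$, with $c(\sigma,0)=\sigma$ and $c(\sigma,s)\in W_{<n+1}$ for $s>0$.

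The next step is to make the collar thin in width. For each $\sigma$ the function $s\mapsto w_E(c(\sigma,s))$ is continuous with value $n+1$ at $s=0$, so the set $\{(\sigma,s)\colon w_E(c(\sigma,s))>n+\tfrac12\}$ is an open neighbourhood of the zero-section; by paracompactness of $W_{=n+1}$ there is a continuous $\rho\colon W_{=n+1}\to(0,1)$ with $w_E(c(\sigma,s))>n+\tfrac12$ whenever $0\le s<\rho(\sigma)$. Reparametrising $\tilde c(\sigma,s)=c(\sigma,\rho(\sigma)s)$ yields a collar whose entire image satisfies $w_E>n+\tfrac12$. I then set
\[
U = W_{\le n+1}\setminus \tilde c\bigl(W_{=n+1}\times[0,\tfrac12)\bigr).
\]
Because the removed half-collar meets $W_{=n+1}$ only in the section $s=0$ and has $w_E>n+\tfrac12$ throughout, $U$ is disjoint from $W_{=n+1}$ and contains $W_{\le n+\frac12}$; thus $W_{\le n}\subseteq U\subseteq W_{<n+1}$, as required. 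Finally, the usual collar deformation retraction---fixing all points outside the collar and sliding $\tilde c(\sigma,s)$ with $s<\tfrac12$ along the ray to $\tilde c(\sigma,\tfrac12)$---exhibits $U$ as a deformation retract of $W_{\le n+1}$.

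I expect the main obstacle to be the first step: establishing that $W_{\le n+1}$ genuinely is a manifold with boundary so that the collar theorem applies. This rests on the local two-sided splitting of Proposition \ref{lem:integerwalls} together with the observation that the geometric flow is tangent, not transversal, to $W_{=n+1}$, which forces the use of an abstract collar in place of $v(\sigma)$. A secondary technical point, handled by the continuous reparametrisation above, is obtaining the collar \emph{uniformly} thin in width over the possibly non-compact wall $W_{=n+1}$.
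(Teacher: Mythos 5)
Your proposal is correct and follows essentially the same route as the paper: identify $W_{\le n+1}$ as a manifold with boundary $W_{=n+1}$ via Proposition \ref{lem:integerwalls} and Lemma \ref{wall2}, take a collar (the paper uses pairwise disjoint tubular neighbourhoods of the components, you use a single thinned collar), and let $U$ be the complement of the inner half. Your observation that the width-decreasing flow is tangent, not transversal, to the upper wall $W_{=n+1}$ --- so that an abstract collar must be used instead --- is a correct and worthwhile clarification that the paper leaves implicit.
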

\begin{proof}
By Lemma \ref{wall2}, the subset $W_{=n+1}\subset \Stab^*_\red(X)$ is a real codimension one submanifold.  By Lemma \ref{lem:integerwalls}, this submanifold borders  $W_{< n+1}$ on only one side. Thus $W_{\le n+1}$ is a manifold with boundary, and the boundary is $W_{=n+1}$. Therefore,
we can find a tubular neighbourhood of $W_{=n+1}\subset W_{< n+1}$ homeomorphic to $W_{= n+1} \times [0, \epsilon)$  and let $U$ be its complement. 
\end{proof}

Combining Lemmas \ref{lem:n} and \ref{lem:n1}, we obtain a retraction
\[ R_n \colon W_{\le n+1} \to W_{\le n}, \]
and a homotopy 
\[ H_n \colon [0, 1] \times W_{\le n+1} \to W_{\le n+1} \]
between the identity and $R_n$.

\begin{lemma} \label{lem:W0retract}
The subset $W_{=0} \subset \Stab^*_\red(X)$ is a deformation retract.
\end{lemma}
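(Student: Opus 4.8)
The plan is to telescope the retractions $R_n$ into a single (strong) deformation retract of $\St(X)$ onto $W_{=0}$. First I would record two elementary facts. Since the Harder--Narasimhan filtration of $E$ has finitely many factors, the width $w_E(\sigma)$ is finite for every $\sigma$, so the closed sublevel sets exhaust the space, $\St(X)=\bigcup_{n\ge 0} W_{\le n}$; and since $w_E\ge 0$ we have $W_{=0}=W_{\le 0}$. I would also check that each homotopy $H_n$ from the identity to $R_n$ is in fact a \emph{strong} deformation retract, i.e. $H_n(s,\sigma)=\sigma$ for all $s\in[0,1]$ whenever $\sigma\in W_{\le n}$. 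For the flow homotopy $\mathrm{Flow}'$ of Lemma \ref{lem:n} this holds because $(\sigma,s)\notin V$ for $s>0$ when $w_E(\sigma)\le n$, so $\mathrm{Flow}'(\sigma,s)=r_n(\sigma)=\sigma$; and the tubular retraction of Lemma \ref{lem:n1} fixes $W_{\le n}$ by construction.

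Next I would assemble the telescope. Choose intervals $J_k=[2^{-(k+1)},2^{-k}]$, which partition $[0,1)$ and accumulate at $0$, and on each $J_k$ run the homotopy $H_k$ reparametrised linearly to $[0,1]$, feeding the value at the right endpoint of $J_{k+1}$ into the left endpoint of $J_k$. Concretely, for $\sigma\in\St(X)$ of level $N=\lceil w_E(\sigma)\rceil$ set $\sigma_k=\sigma$ for $k\ge N$ and $\sigma_k=R_k(\sigma_{k+1})$ for $k<N$; descending induction shows $\sigma_{k+1}\in W_{\le k+1}$, so this is legitimate. For $s\in J_k$ with reparametrised value $u\in[0,1]$ I set $H(s,\sigma)=H_k(u,\sigma_{k+1})$, together with $H(0,\sigma)=\sigma$. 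The values agree across consecutive intervals because $H_k(0,\blank)=\id$ and $H_k(1,\blank)=R_k$, so $H$ is well defined. Because $H_j$ fixes $W_{\le j}$, a stability condition of width $\le N$ is left untouched on every $J_k$ with $k\ge N$ and is then pushed down successively by $H_{N-1},\dots,H_0$; hence $H(1,\sigma)\in W_{\le 0}=W_{=0}$, and $H(s,\sigma)=\sigma$ for all $s$ whenever $\sigma\in W_{=0}$.

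The one real point to verify, and the step I expect to be the main obstacle, is continuity of $H$ at the accumulation time $s=0$, where infinitely many homotopies pile up. Here I would use continuity of $w_E$: given $\sigma_*$, the width is bounded by some $N$ on a neighbourhood, so every nearby $\sigma$ lies in $W_{\le N}$ and therefore $H(s,\sigma)=\sigma$ for all $s<2^{-N}$; this forces $H(s,\sigma)\to\sigma_*$ as $(s,\sigma)\to(0,\sigma_*)$. Away from $s=0$ continuity is routine: on each slab $[2^{-m},1]\times W_{\le M}$ the map $H$ is a finite composite of the continuous reparametrised homotopies $H_0,\dots,H_{M-1}$, and these descriptions agree on overlaps. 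Since the $W_{\le M}$ exhaust $\St(X)$ and levels are locally bounded, $H$ is continuous on all of $[0,1]\times\St(X)$. Then $H$ is a homotopy from the identity to the retraction $H(1,\blank)\colon\St(X)\to W_{=0}$ which fixes $W_{=0}$ throughout, exhibiting $W_{=0}$ as a deformation retract.
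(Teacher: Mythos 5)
Your proposal is correct and is essentially the paper's own argument: the paper likewise defines $R_\infty = R_0\circ R_1\circ\cdots$ and builds the homotopy by running $H_n$ on the intervals $[t_{n+1},t_n]$ of a decreasing sequence accumulating at $0$ (you simply take $t_n=2^{-n}$), with continuity at the accumulation time coming from the fact that each $W_{\le n+1}$ is fixed by all higher-index homotopies and these sets exhaust $\St(X)$. Your explicit check that the $H_n$ are \emph{strong} deformation retracts is a point the paper leaves implicit but does rely on, so it is a welcome addition rather than a deviation.
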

\begin{proof}
We define a left inverse $R_\infty$ to the inclusion by the infinite composition
\[
R_\infty = R_0 \circ R_1 \circ R_2 \circ \dots \colon \Stab^*_\red(X) \to W_{=0}.
\]
Of course, on each $W_{<n+1}$, this map is a finite composition and continuous. It follows
that it is well-defined and continuous on $\Stab^*_\red(X)$.

Similarly, to define the homotopy choose any infinite decreasing sequence $t_0 = 1 > t_1 > t_2 >
\dots > 0$; we can define a homotopy $H_\infty$
between the identity and $R_\infty$ as the infinite composition of the homotopies that apply
$H_n$ in the interval $[t_{n+1}, t_n]$. The same argument as before shows that
$H_\infty$ is well-defined and continuous after restriction to $[0,1] \times W_{\le n+1}$,
and therefore well-defined and continuous on all of $[0,1] \times \Stab^*_\red(X)$.
\end{proof}

\subsection*{Geometric stability conditions}
We now fix our object $E$ to be some skyscraper sheaf $E=\O_x$. To prove Theorem \ref{main} it
remains to prove that $W_{=0}$ is connected and contractible.  In fact, given our assumption that the Picard rank
$\rho(X)=1$, the interior of the subset $W_{=0}$ coincides with the subset of geometric stability conditions,
as we now demonstrate.

\begin{lemma}\label{lem:semirigidsheaf}
Assume that $F \in \Coh (X)$ is a semirigid sheaf. Then either
$F \cong \O_x$ for some $x \in X$, or the support of $F$ is all of $X$.
\end{lemma}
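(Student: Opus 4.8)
The plan is to classify $F$ according to its rank and the dimension of its support, computing the self-intersection $(v(F),v(F))$ of the Mukai vector in each case and comparing it against the inequality $(v(F),v(F))\leq 0$ that holds for every semirigid object. Write $v(F)=(r,c_1,s)\in\N(X)$, so that $(v(F),v(F))=c_1^2-2rs$ by the definition of the Mukai pairing.

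If $r=\operatorname{rank}(F)>0$, then $F$ is nonzero at the generic point of $X$, so its support is a closed set containing the generic point, hence all of $X$; this is the second alternative of the statement. I may therefore assume that $F$ is torsion, $r=0$, and $(v(F),v(F))=c_1^2$.

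Next I would rule out one-dimensional support, and this is exactly the step that uses $\rho(X)=1$. If the support of $F$ had a one-dimensional component, then $c_1(F)$ would equal the class of that support counted with multiplicities, which is a nonzero effective divisor class; since $\NS(X)$ is generated by a single ample class $H$, we would have $c_1(F)=dH$ with $d\geq 1$, and hence $c_1^2=d^2H^2>0$. This contradicts $(v(F),v(F))=c_1^2\leq 0$. (On a K3 surface of higher Picard rank this argument breaks down, since an effective $(-2)$-curve has negative self-intersection, which is precisely why the hypothesis is needed here.) Thus $c_1(F)=0$ and $F$ is supported in dimension zero.

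It remains to treat the zero-dimensional case, which is where the real work lies. Here $(v(F),v(F))=0$, so $\chi(F,F)=0$ by Riemann-Roch; combining $\Hom^i_X(F,F)=0$ for $i<0$ (as $F$ is a sheaf) with the Serre duality identification $\Hom^2_X(F,F)\cong\Hom_X(F,F)^\vee$ gives $\dim_\C\Hom^1_X(F,F)=2\dim_\C\Hom_X(F,F)$. Semirigidity forces the left-hand side to equal $2$, so $\Hom_X(F,F)=\C$ and $F$ is simple. Finally I would identify a simple zero-dimensional sheaf with a skyscraper: writing $F=\bigoplus_x F_x$ as the sum of its parts supported at the finitely many points of its support, one has $\operatorname{End}(F)=\bigoplus_x\operatorname{End}(F_x)$, so simplicity forces a single point $x$ with $\operatorname{End}(F_x)=\C$. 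A finite-length module of length at least two over the regular local ring $\O_{X,x}$ always carries a nonzero, non-invertible (hence non-scalar) endomorphism, obtained by composing a surjection of $F_x$ onto a one-dimensional quotient of $F_x/\mathfrak{m}_x F_x$ with the inclusion of a nonzero socle vector; therefore $\operatorname{End}(F_x)=\C$ forces length one, i.e. $F\cong\O_x$. The main obstacle is this last identification; once the Mukai-vector bookkeeping is in place the earlier cases are immediate, the crucial role of $\rho(X)=1$ being the positivity $c_1^2>0$ that eliminates curve-supported sheaves.
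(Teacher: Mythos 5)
Your proof is correct and its key step---ruling out one-dimensional support by noting that $\rho(X)=1$ forces $c_1(F)=dH$ with $d\ge 1$ and hence $(v(F),v(F))=c_1^2=d^2H^2>0$, contradicting $(v(F),v(F))\le 0$ for semirigid objects---is exactly the argument in the paper. The one difference is that the paper's proof stops there, tacitly treating the zero-dimensional case as immediate, whereas you spell it out: Riemann--Roch plus Serre duality give $\dim\Hom^1(F,F)=2\dim\Hom(F,F)$, so semirigidity forces $F$ to be simple, and a simple sheaf of finite length must be a single skyscraper since any local Artinian module of length $\ge 2$ admits a non-scalar endomorphism (quotient to the residue field followed by inclusion of a socle vector). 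That extra paragraph is a genuine, correct completion of a step the paper leaves implicit, and everything in it checks out.
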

\begin{proof}
Skyscrapers are the only semirigid zero-dimensional sheaves, so if neither of the given alternatives hold, then $F$ has one-dimensional support, and so
$v(F)=(0,D,s)$ with $D$ an effective curve class. But this is impossible,  since  $\operatorname{Pic}(X)= \Z$
implies that
\[ (v(F), v(F)) = D^2  > 0\]
contradicting the assumption that $F$ is semirigid.
\end{proof}
\begin{remark} \label{rem:sphericaltorsionfree}
Combined with Lemma \ref{stan}, this immediately shows that any rigid sheaf is
automatically torsion-free.
\end{remark}

The following is due to Hartmann.

\begin{lemma} 
Given $\phi \in \Aut^+ H^*(X, \Z)$, there exists an autoequivalence
$\Phi \in \Aut D(X)$ whose induced map in cohomology is given by $\phi$, and
such that $\Phi$ preserves the connected component $\Stab^\dag(X)$.
\end{lemma}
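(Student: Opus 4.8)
The plan is to first determine the action of $\phi$ on the period domain, then realise $\phi$ by \emph{some} autoequivalence and correct it, using only autoequivalences already known to preserve $\Stab^\dag(X)$, until it carries a geometric stability condition to a geometric one. I begin with the observation that $\phi$ must preserve the distinguished component $\P^+_0(X)$. Since $\phi$ is a Hodge isometry it fixes the line $H^{2,0}(X)$, acting on it by a nonzero complex scalar; multiplication by a scalar is orientation-preserving on the real $2$-plane $(H^{2,0}\oplus H^{0,2})\cap H^*(X,\R)$. This plane is one positive definite summand of a positive $4$-plane, the other being a positive $2$-plane inside $\N(X)\tensor\R$, so the hypothesis $\phi\in\Aut^+ H^*(X)$ forces $\phi$ to preserve the orientation of the $2$-plane in $\N(X)\tensor\R$ as well. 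As this orientation is exactly what distinguishes $\P^+(X)$ from $\P^-(X)$, we get $\phi(\P^+(X))=\P^+(X)$, and since $\phi$ permutes the hyperplanes $\delta^\perp$ for $\delta\in\Delta(X)$, also $\phi(\P^+_0(X))=\P^+_0(X)$. By the theorem of Mukai and Orlov recalled in the introduction there is some $\Phi_0\in\Aut\D(X)$ with $\varpi(\Phi_0)=\phi$, and by Theorem \ref{bridge} the image $\Phi_0(\Stab^\dag(X))$ is a connected component of $\Stab(X)$ whose image under the forgetful map is $\phi(\P^+_0(X))=\P^+_0(X)$.

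It then remains to adjust $\Phi_0$ by autoequivalences in the stabiliser $\Gamma=\{\Phi:\Phi(\Stab^\dag(X))=\Stab^\dag(X)\}$, so that it maps a geometric stability condition to a geometric one; once $\Phi_0$ does this, the connectedness of $U(X)\subset\Stab^\dag(X)$ forces $\Phi_0(\Stab^\dag(X))=\Stab^\dag(X)$. Here I would use that $\Gamma$ contains all twists by line bundles $\O(mH)$, the shift $[1]$, the pullbacks by $\Aut X$, and the squared spherical twists $\ST_{S_\delta}^2$ (deck transformations by Proposition \ref{prop:loopandtwist}): the first three preserve $U(X)$ up to the $\C$-action, and the last preserve $\Stab^\dag(X)$ tautologically. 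Their images under $\varpi$ realise the full stabiliser in $\Aut^+ H^*(X)$ of the large-volume cusp, i.e.\ of the isotropic ray $\R\cdot(0,0,1)=\R\cdot v(\O_x)$ corresponding to $\Omega=\exp(i\omega)$ as $\omega\to\infty$. The only remaining freedom in $\phi$ is therefore a permutation of the cusps of $\P^+_0(X)/\Aut^+ H^*(X)$; each such cusp is the large-volume limit attached to a Fourier–Mukai partner of $X$, realised (since $\rho(X)=1$) as a fine moduli space of $\mu$-stable bundles, and moving the large-volume cusp of $X$ to another is induced by the corresponding Fourier–Mukai transform. Granting that this transform carries geometric stability conditions across to geometric stability conditions inside $\Stab^\dag(X)$, we reduce to the cusp-fixing case, where $\phi(0,0,1)=\pm(0,0,1)$, so that $\Phi_0$ sends each $\O_x$ to a shift of a skyscraper and hence $\Phi_0(U(X))\subset U(X)$.

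The hard part is the bracketed claim in the last step: that the Fourier–Mukai transform attached to a cusp preserves the distinguished component. This is the genuinely geometric input (due to Hartmann), and I expect it to be the main obstacle. Concretely, one must show that the stability conditions on $X$ lying near a given cusp of $\Stab^\dag(X)$ are exactly the images, under the relevant Fourier–Mukai transform, of the large-volume (geometric) stability conditions on the corresponding partner $Y$, identified by Theorem \ref{geom}; this exhibits the two distinguished components as interchanged by the transform and pins down the cusp-to-partner dictionary. Everything else — the orientation computation, the reduction to the cusp stabiliser, and the final connectedness argument — is formal once Theorems \ref{bridge} and \ref{geom} and the realisability result of Mukai and Orlov are available.
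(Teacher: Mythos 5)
The paper does not prove this lemma at all: its ``proof'' is a single citation to Hartmann \cite[Proposition 7.9]{Hartmann:cusps}. Your sketch correctly reconstructs the architecture of Hartmann's argument --- the orientation computation showing $\phi(\P^+_0(X))=\P^+_0(X)$, realising $\phi$ by some $\Phi_0$ via Mukai--Orlov, reducing modulo the cusp stabiliser (generated on cohomology by standard autoequivalences, with the squared spherical twists acting trivially), and identifying the remaining cosets with cusps, i.e.\ with Fourier--Mukai partners realised as moduli of stable bundles. The final connectedness argument (one geometric stability condition landing in $U(X)$ forces $\Phi_0(\Stab^\dag(X))=\Stab^\dag(X)$) is also sound.

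However, as you yourself flag, the proposal is not a proof: the central claim --- that the Fourier--Mukai transform attached to a cusp carries geometric stability conditions on the partner into $\Stab^\dag(X)$ --- is left entirely as an assumption, and this is precisely the content of Hartmann's paper (his analysis of the behaviour of $\Stab^\dag$ near the cusp $\phi(0,0,1)^\perp$ of the period domain, matching it with the large-volume chamber of the moduli space $M(v)$). Two further steps are asserted rather than proved: (i) that $\varpi$ of the standard autoequivalences generates the \emph{full} stabiliser in $\Aut^+H^*(X)$ of the isotropic ray $\R\cdot(0,0,1)$ --- this is a nontrivial lattice-theoretic statement about the unipotent radical and Levi part of the cusp stabiliser, not automatic from listing $\otimes\,\O(mH)$, shifts and $\Aut X$; and (ii) the cusp--partner dictionary itself, including the fact that for $v=\phi(0,0,1)$ the moduli space $M(v)$ is again isomorphic to $X$ so that the construction yields a \emph{self}-equivalence with the prescribed cohomological action. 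So the verdict is: right road map, consistent with the source the paper outsources to, but the genuinely hard geometric input is named rather than supplied.
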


\begin{proof}
 See \cite[Proposition 7.9]{Hartmann:cusps}.
\end{proof}

We can now prove the claimed characterisation of geometric stability conditions.

\begin{lemma}
\label{ble}
Suppose $\sigma\in \Stab^*(X)$ is such that some $\O_x$ is stable. Then all skyscraper sheaves are stable of the same phase.
\end{lemma}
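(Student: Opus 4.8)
The plan is to prove that the set $G=\{\,y\in X:\O_y \text{ is }\sigma\text{-stable}\,\}$ is all of $X$. Every skyscraper shares the Mukai vector $v(\O_y)=(0,0,1)\in\CC^+$, which is isotropic (hence $\O_y$ is semirigid) and satisfies $\Hom_X(\O_y,\O_y)=\C$; consequently any $\sigma$-semistable $\O_y$ automatically has phase $\phi:=\phi(\O_x)$, and "stable of the same phase'' reduces to plain stability. I would then show that $G$ is nonempty, open and closed, and conclude $G=X$ from connectedness of $X$. Nonemptiness is the hypothesis ($x\in G$), and openness holds because stability is an open condition in the flat family $\{\O_y\}_{y\in X}$, in the same spirit as Prop.~\ref{open}.

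The heart of the argument is the implication \emph{semistable $\Rightarrow$ stable}, which I would establish by a cone computation. Suppose $\O_y$ is strictly semistable. Since $(0,0,1)$ is primitive, Prop.~\ref{cases} excludes cases (a) and (b) (a summand $S^{\oplus k}$ would force $k=1$ and $\O_y$ spherical, contradicting semirigidity), so $\O_y$ has exactly two stable factors $S_1,S_2$ of phase $\phi$ with linearly independent Mukai vectors, and $(0,0,1)=m_1 v(S_1)+m_2 v(S_2)$ with $m_i\ge 1$. Together with the stable object $\O_x$ we obtain three pairwise non-isomorphic stable objects of phase $\phi$ (indeed $\O_x\neq S_i$, since otherwise the relation would make $v(S_j)$ proportional to $(0,0,1)$, contradicting independence). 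Applying Lemma~\ref{ynew} to $\{\O_x,S_1,S_2\}$, with $\O_x$ semirigid, shows at most one $S_i$ is rigid, while Lemma~\ref{stab} shows at most one is semirigid; hence one is rigid and one semirigid. Now I pair with $v(\O_x)=(0,0,1)\in\CC^+$: as $\O_x,S_i$ are distinct stable objects of equal phase, Serre duality and Riemann--Roch give $(v(\O_x),v(S_i))\ge 0$. If $v(S_i)\in\CC^+$, then Lemma~\ref{z} forces $(v(\O_x),v(S_i))=0$ and $v(S_i)$ proportional to the isotropic $(0,0,1)$; this is impossible for the rigid factor, and for the semirigid one it would make the rigid factor's class proportional to $(0,0,1)$ as well, again impossible. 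Thus $v(S_1),v(S_2)\in\CC^-$. Since $\CC^-$ is a convex cone, $m_1 v(S_1)+m_2 v(S_2)\in\CC^-$, contradicting $(0,0,1)\in\CC^+$. Hence $\O_y$ semistable implies $\O_y$ stable, so $G$ coincides with the locus where $\O_y$ is semistable.

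It remains to prove $G$ closed, equivalently that a flat limit $\O_{y_0}$ of stable skyscrapers cannot be unstable; by the previous paragraph such a limit is at least not strictly semistable, so I must rule out genuine instability. The family is bounded (fixed class $(0,0,1)$, fixed $\sigma$), so only finitely many numerical Harder--Narasimhan types occur and the extremal phases $\phi_\pm(\O_y)$ vary semicontinuously. The same $\CC^\pm$-dichotomy constrains the limit: arguing as in Lemma~\ref{z} one checks that if \emph{every} HN factor of $\O_{y_0}$ lay in $\CC^+$ then, because $\sum v(A_i)=(0,0,1)$ is isotropic, all factors would be proportional to $(0,0,1)$ and $\O_{y_0}$ would already be semistable; so an unstable limit must acquire an extremal factor in $\CC^-$, whose class and (semi)rigidity are highly rigid when $\rho(X)=1$ (Lemma~\ref{semirigidsheaf}). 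I would then derive a contradiction with the stability of the nearby $\O_{y_i}$, using also the complete orthogonality $\Hom^\bullet_X(\O_{y_i},\O_{y_0})=0$ for $y_i\neq y_0$. This closedness step --- controlling the degeneration of a family of stable skyscrapers --- is the main obstacle, since for a general Bridgeland stability condition the semistable locus need not be closed; what rescues the argument is precisely the combination of boundedness with the cone and rigidity constraints forced by $\rho(X)=1$. Once $G$ is shown to be clopen and nonempty, connectedness of $X$ gives $G=X$, and the lemma follows.
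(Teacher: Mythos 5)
Your approach --- an open--closed argument on the parameter space $X$ --- is genuinely different from the paper's, and it has two real gaps, both of which you have located in the family-variation steps rather than the pointwise ones. First, openness: Proposition~\ref{open} says that for a \emph{fixed} object $F$ the locus of stability conditions $\sigma$ where $F$ is stable is open in $\Stab(X)$. What you need is the transposed statement: for a \emph{fixed} $\sigma$, the locus of $y\in X$ where $\O_y$ is $\sigma$-stable is open in $X$. That is not ``the same spirit''; it is a statement about Bridgeland stability in flat families of objects, which requires relative HN machinery (boundedness, semicontinuity of $\phi^\pm$ in the family parameter) that is nowhere established in this paper --- note that for an arbitrary $\sigma\in\Stab^*(X)$ the skyscrapers need not even lie in the heart, so ``flat family'' does not directly interact with the slicing. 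Second, closedness: you explicitly leave this step as a sketch (``I would then derive a contradiction\dots'') and correctly identify it as the main obstacle, but the cone and rigidity constraints you invoke do not by themselves rule out an unstable limit; no contradiction is actually derived. A smaller issue: since $Z(\O_y)=Z(\O_x)$ only pins down the phase of a semistable $\O_y$ modulo $2\Z$, your reduction of ``stable of the same phase'' to ``stable'' needs the even shifts to be controlled, which your argument does not do. Your middle paragraph (semistable $\Rightarrow$ stable via Proposition~\ref{cases}, Lemmas~\ref{ynew}, \ref{stab} and \ref{z}) is essentially sound, but it addresses the easiest of the three steps.

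For comparison, the paper sidesteps the family problem entirely: using Hartmann's result it finds an autoequivalence $\Phi$ acting trivially on cohomology with $\Phi(\sigma)$ in the geometric chamber, so that the objects $E_y=\Phi^{-1}(\O_y)$ are \emph{simultaneously} stable of class $(0,0,1)$ in $\sigma$; it then shows each $E_y$ is a shifted skyscraper by combining the pairwise orthogonality of the $E_y$ with Lemma~\ref{stan} and Lemma~\ref{lem:semirigidsheaf} (Mukai-style rigidity of sheaves when $\rho(X)=1$), and concludes via Fourier--Mukai standard facts that $\Phi$ is an automorphism composed with a single shift. The global transport by an autoequivalence is exactly what replaces your open--closed argument, and it is the ingredient your proposal is missing.
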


\begin{proof}
We first prove the result under the additional assumption that $\sigma$ is not on a wall with respect to the class $v(\O_x)$.

By definition of $\Stab^*(X)$, there is an autoequivalence $\Phi\in \Aut \D(X)$ such that
$\Phi(\sigma) \in \Stab^\dag(X)$. By the previous Lemma, we may assume that
$\Phi$ acts as the identity on cohomology; in particular, it leaves
the class $v(\O_x)$ invariant. By the argument used in the proof of \cite[Prop. 13.2]{Bridgeland:Stab}, we can compose with squares of spherical twists,
and further assume that $\Phi(\sigma)$ is in the geometric chamber.
The stable objects in $\sigma$ of class $v(\O_x)$  and the correct phase
are exactly the objects $E_y=\Phi^{-1}(\O_y)$ for $y\in X$. 

The objects $E_y$
are pairwise orthogonal.
By assumption one of these objects is $\O_x$, hence all the others have
support disjoint from $x$. Using Lemma \ref{stan} repeatedly, it follows that each cohomology sheaf of $E_y$ is
rigid or semirigid, and at most one is semirigid. By Lemma \ref{lem:semirigidsheaf},
this is only possible if $E_y$ is the shift of a skyscraper sheaf: $E_y \cong \O_z[n]$ for
some $z \in X$ and $n \in \Z$. Using the representability of $\Phi$ as a Fourier-Mukai transform,
standard arguments (see \cite[Corollary 5.23 and Corollary 6.14]{Huybrechts:FM})
show that $n$ is independent of $y$, and that $\Phi$ is the composition of an
automorphism of $X$ with a shift. In particular, all $\O_y$ are stable of the same phase.

Finally, if $\sigma$ is on a wall for the class $v(\O_x)$, then  by applying the previous 
argument  to a small perturbation of $\sigma$ we can conclude that $\sigma$ is in the boundary of the geometric chamber. However, it follows from
\cite[Theorem 12.1]{Bridgeland:Stab} that in the case of Picard rank one, every wall of the geometric
chamber destabilizes every skyscraper sheaf $\O_x$.
\end{proof}


\subsection*{Final steps}
We can now complete the proofs of our main Theorems.

\begin{lemma} \label{lem:W0contracts}
Let $E=\O_x$ be a sykscraper sheaf and consider the width function $w=w_E$. 
Then $W_{=0}$ is contractible.
\end{lemma}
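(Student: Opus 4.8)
The plan is to identify $W_{=0}$ with the closure of the geometric chamber, to recognise this closure as a manifold with boundary whose interior is the geometric chamber, and then to deduce contractibility from that of the interior.

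First I would determine the interior of $W_{=0}$. Since $v(\O_x)=(0,0,1)$ is primitive, a semistable $\O_x$ is quasistable exactly when it is stable, so a point $\sigma\in W_{=0}$ lies in $W_\Z$ precisely when $\O_x$ is strictly semistable. By Lemma~\ref{ble} the sheaf $\O_x$ is stable at $\sigma$ if and only if $\sigma$ is geometric, and by Theorem~\ref{geom} the geometric stability conditions form the open subset $U(X)\cong\C\times\uu$. As stability is an open condition (Proposition~\ref{open}), $U(X)$ is open and contained in the closed set $W_{=0}$, with complement the strictly-semistable locus $W_{=0}\cap W_\Z$; hence $U(X)$ is exactly the interior of $W_{=0}$.

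Next, and this is the \textbf{main obstacle}, I would show that $W_{=0}$ is a manifold with boundary $W_{=0}\cap W_\Z$, with no corners or lower-dimensional strata. Near a boundary point $\sigma_0$ the object $\O_x$ is semirigid with $\Hom_X(\O_x,\O_x)=\C$, so Proposition~\ref{cases} forces it to have exactly two stable factors and thus to determine a single local wall. Together with Lemma~\ref{wall2}, which exhibits $W_\Z$ as a codimension-one submanifold, and Proposition~\ref{zerocase}, which cuts a neighbourhood of $\sigma_0$ into a piece where $\O_x$ is quasistable --- hence, by primitivity, stable and geometric --- and a piece where $w_E>0$, this realises $W_{=0}$ locally as a closed half-space bounded by $W_\Z$. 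Every boundary point is then a limit of interior points, so $W_{=0}=\overline{U(X)}$ is a manifold with boundary.

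Finally I would conclude homotopically. The region $\uu$ is the upper half-plane with a discrete family of vertical slits removed, each joining the real axis to a removed hole (Figure~\ref{fig:hwithoutsegments}); it is a connected, simply connected open subset of $\C$, since any loop is compact and can be pushed over the tops of the finitely many slits it encircles, and hence is contractible. Therefore $U(X)\cong\C\times\uu$ is contractible. Because the inclusion of the interior of a manifold with boundary into the whole is a homotopy equivalence (via a collar neighbourhood), we obtain $W_{=0}\simeq U(X)$, and so $W_{=0}$ is contractible.
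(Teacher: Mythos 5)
Your proposal is correct and follows essentially the same route as the paper: identify $W_{=0}$ with the closure of the geometric chamber via Lemma~\ref{ble} and Theorem~\ref{geom}, note that the interior $U(X)\cong\C\times\uu$ is contractible, and use the manifold-with-boundary/collar structure (the paper invokes the tubular-neighbourhood argument of Lemma~\ref{lem:n1}, you invoke Proposition~\ref{zerocase} and a collar, which is the same mechanism) to retract the closure onto the interior. Your write-up simply spells out in more detail the steps the paper leaves implicit.
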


\begin{proof}
By the previous Lemma, the set $W_{=0}$ coincides with the closure of the geometric chamber.
Recall from Theorem \ref{geom} that its interior is homeomorphic 
to $\C \times \uu$. It is immediate to see that $\uu$ is contractible.
With arguments as in Lemma \ref{lem:n1} one also shows 
that there is an open subset of the geometric chamber that is a deformation retract
of its closure $W_{=0}$.
\end{proof}

\begin{proof}[Proof of Theorem \ref{main}]
The result is immediate from Lemmas \ref{reducedretract}, \ref{lem:W0retract} and
\ref{lem:W0contracts}.
\end{proof}

\begin{proof}[Proof of Theorem \ref{corcor}]
As discussed in the introduction, since $\Stab^*(X)$ is simply-connected, there 
is an isomorphism
\[ \pi_1(\P_0^+(X)) \xrightarrow{\sim} \Aut^0 D(X). \]
Recall that $\pi_1(\P_0^+(X))$ is the product of
$\pi_1 \left(\GL^+_2(\R)\right)  \cong \Z$ with the fundamental
group of $\h^0 \subset \h$, which in turn is a free group generated by loops
around the holes $\delta^\perp$ (see equation \eqref{eq:pi1P0} and the surrounding discussion).

By Proposition \ref{prop:loopandtwist}, these loops act by squares of spherical twists. Finally,
from the definition of the action of the universal cover of ${\GL^+_2}(\R)$, it is obvious that
the generator of $\pi_1(\GL^+_2(\R))$ acts by an even shift.
\end{proof}

To conclude, we point out two consequences of our results for spherical objects;
they may be of independent interest, e.g. in relation to \cite{Huybrechts:spherical}. 
\begin{cor}
Let $S \in D(X)$ be a spherical object. Then there exists a stability condition
$\sigma \in \Stab^\dag(X)$ such that $S$ is $\sigma$-stable.
\end{cor}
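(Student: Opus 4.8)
The plan is to run the width-function machinery of Sections \ref{sec:flow}--\ref{sec:conclusion} with the distinguished object taken to be $E = S$ itself. Since $S$ is spherical we have $\Hom_X(S,S) = \C$ and $\Hom^1_X(S,S) = 0$, so $S$ is rigid and satisfies all the standing hypotheses imposed on $E$; crucially, neither the construction of the flow nor the retraction Lemma \ref{lem:W0retract} used that $E$ was a skyscraper sheaf. First I would therefore invoke Lemma \ref{lem:W0retract} with $E=S$ to conclude that the semistability locus $W_{=0} = \{\sigma \in \St(X) : S \text{ is } \sigma\text{-semistable}\}$ is a deformation retract of $\St(X) = \Stab^*_\red(X)$. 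Since the latter is nonempty (it contains the reduced geometric stability conditions of Theorem \ref{geom}), the locus $W_{=0}$ is nonempty as well.

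Next I would upgrade semistability to stability. Fix any $\sigma_0 \in W_{=0}$, so that $w_S(\sigma_0) = 0$. If $\sigma_0 \notin W_\Z$, then by the very definition of $W_\Z$ the object $S$ is $\sigma_0$-quasistable; Proposition \ref{cases}, together with the remark that case (c) is never quasistable and that $S$ is rigid rather than semirigid, forces $S \cong T^{\oplus k}$ for a stable spherical object $T$, and $\Hom_X(S,S) = \C$ then gives $k = 1$, so $S = T$ is already $\sigma_0$-stable. If instead $\sigma_0 \in W_\Z$, I would apply Proposition \ref{zerocase}: locally near $\sigma_0$ the wall $W_\Z$ separates $\St(X)$ into two open regions, on one of which $S$ is quasistable and on the other of which $w_S > 0$. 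Choosing $\sigma$ in the quasistable region and arguing exactly as before (quasistability of $S$ together with $\Hom_X(S,S)=\C$ again forces $S$ to be stable) produces the desired stability condition. In either case $\sigma \in \Stab^*_\red(X) \subset \Stab^*(X)$, and since $\Stab^*(X) = \Stab^\dag(X)$ by Theorem \ref{main} (the component being connected), this completes the argument.

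I expect the only genuine content to be the observation that the retraction onto $W_{=0}$ is available for an arbitrary rigid $E$ with $\Hom_X(E,E)=\C$, not merely for $E = \O_x$; this is immediate from inspecting the first part of Section \ref{sec:conclusion}, where the specialization to skyscraper sheaves enters only in the identification of $W_{=0}$ with the geometric chamber, but it should be stated explicitly. The passage from semistable to stable is then a purely formal consequence of Propositions \ref{cases} and \ref{zerocase}, the key point being that the condition $\Hom_X(S,S)=\C$ is intrinsic to the object $S$ and hence holds in every stability condition, ruling out both the decomposable case $k>1$ and the strictly semistable case (c) on the quasistable side of $W_\Z$.
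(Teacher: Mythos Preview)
Your argument is correct and is exactly the paper's approach: the paper's proof is the single sentence ``This is immediate from Lemma \ref{lem:W0retract}, applied to $E = S$,'' and you have spelled out the details that this sentence leaves implicit. In particular, the paper does not make explicit the passage from $w_S(\sigma)=0$ (semistability) to stability, which you handle via Propositions \ref{cases} and \ref{zerocase}; a slightly quicker way to phrase that step is to note that $v(S)^2=-2$ forces $v(S)$ to be primitive, so quasistable already means stable, and then Proposition \ref{zerocase} (or Proposition \ref{open}) supplies a quasistable point near any semistable one.
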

\begin{proof}
This is immediate from Lemma \ref{lem:W0retract}, applied to $E = S$.
\end{proof}

\begin{remark}
Based on the above Corollary, one can also prove the following statement: \emph{$\Aut^0 D(X)$ acts
transitively on the set of spherical objects $S$ with fixed Mukai vector $v(S) = \delta$.}

(As a consequence, $\Aut D(X)$ acts transitively on the set of spherical objects in $D(X)$ if and only if
if $\Aut^+ H^*(X, \Z)$ acts transitively on the set $\Delta(X)$ of spherical classes. Similar
results have been proved for spherical vector bundles and mutations in 
\cite{Kuleshov:spherical}.)

By the above Corollary, our claim follows if for any wall $\W$ where stability for the class
$\delta$ changes, we can prove that the two stable objects $T^+, T^-$ of class $v(T^{\pm}) = \delta$
on either side of the wall are related by $T^- = \Phi(T^+)$, for some autoequivalence in $\Phi \in \Aut^0 D(X)$. 
This can, for example, be shown by applying \cite[Proposition
6.8]{BM:walls}. (In the language and notation of [ibid.], we have $v$ equal to our $\delta$,
and $v_0$ is equal to the Mukai vector of one of the two spherical objects
$S, T$ that are \emph{stable} on the wall; these are exactly the two simple objects in 
the category $\A$ considered in Section \ref{sec:walls}. Therefore, up to switching $S$ and $T$
we can set $E_0$ equal to $S$. Then [ibid.] shows that both $T^+$ and $T^-$ are obtained
as spherical twists of $E_0=S$, which implies our claim.)
\end{remark}


\section{Relation with mirror symmetry} \label{sect:MS}

Return to the case of a general algebraic K3 surface $X$. We will describe a precise relation
between the group of autoequivalences and the monodromy group of the mirror family implied by 
Conjecture \ref{kyoto}.

\subsection*{Stringy K{\"a}hler moduli space}

We start by reviewing the construction of an interesting subgroup of $\Aut \D(X)$, 
 which we learnt about from Daniel Huybrechts.
Let us write \[\Aut_{\CY} ^+H^*(X)\subset \Aut ^+ H^*(X)\] for the subgroup of Hodge isometries
$\phi$ whose complexification acts trivially on the line $H^{2,0}(\C)\subset H^*(X,\C)$. This is
equivalent to the statement that $\phi$ acts trivially on the transcendental lattice
$T(X):=\N(X)^{\perp}\subset H^*(X,\Z)$: for any integral class $\alpha \in T(X)$, the difference
$\phi(\alpha) - \alpha$ is integral, and in the orthogonal complement of both $H^{2,0}(\C)$ and
$\N(X)$, and thus equals zero.  In particular,
\[
\Aut^+_{\CY}H^*(X) \subset \Aut \N(X)
\]
is the subgroup of index two preserving orientations of positive definite two-planes.

\begin{defn} \label{def:CYequivalence}
We call an autoequivalence $\Phi\in \Aut \D(X)$ \emph{Calabi-Yau} if the induced Hodge isometry 
$\varpi(\Phi)$ lies in the subgroup $\Aut_{\CY} H^*(X)$.
\end{defn}

Write $\Aut_{\CY} \D(X)\subset \Aut \D(X)$ for the group of Calabi-Yau autoequivalences. In the Appendix we explain how the Calabi-Yau condition can be interpreted as meaning that $\Phi$ is an autoequivalence of the category $\D(X)$ as a Calabi-Yau category.

Now consider  the quotient stack
\[\L_{\rm Kah}(X)=\Stab_{\rm red}^*(X)/\Aut_{\rm CY} D(X).\]
The action of $\C$ on $\Stab^*(X)$ induces an action of $\C^*$ on $\L_{\rm Kah}(X)$ and we also consider the quotient
\[ \M_{\rm Kah}(X)=\L_{\rm Kah}(X)/\C^*\isom (\Stab_{\rm red}^*(X)/ \C)/(\Aut_{\rm CY} D(X)/[2]).\]
 which we view as  a mathematical version of the stringy K{\"a}hler moduli space of the K3 surface $X$.

\begin{remark} If 
 Conjecture \ref{kyoto} holds  then $\Stab^*_{\rm red}(X)$ is the universal cover of $\L_{\rm red}(X)$, and there are isomorphisms
\begin{equation*}
\label{do} \pi_1^{\rm orb} (\L_{\rm Kah}(X)) \isom  \Aut_{\CY} D(X), \quad  \pi_1^{\rm orb} (\M_{\rm Kah}(X)) \isom  \Aut_{\CY} D(X) /[2].\end{equation*}
In particular, by Theorem \ref{main}, this is the case whenever  $\rho(X)=1$.\end{remark}

We have the following more concrete descriptions of these stacks

\begin{prop}
There are isomorphisms
\[\L_{\rm Kah}(X)\isom \Q_0^+(X)/\Aut ^+_\CY H^*(X),\quad \M_{\rm Kah}(X)\isom \Omega_0^+(X)/\Aut ^+_\CY H^*(X).\]
Moreover, these stacks are Deligne-Mumford and have quasi-projective coarse moduli spaces.
\end{prop}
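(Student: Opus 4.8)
The plan is to deduce both isomorphisms formally from the period map together with the extension of groups determined by $\varpi$, and only then to extract the Deligne--Mumford property and quasi-projectivity from the arithmeticity of $\Aut^+_\CY H^*(X)$. I would begin on the distinguished component. Restricting the forgetful map of Theorem \ref{bridge} to the reduced locus gives a normal covering
\[ \Stab^\dagger_{\rm red}(X)\lra \Q^+_0(X), \]
whose deck group $D$ is the subgroup of $\Aut^0\D(X)$ preserving $\Stab^\dagger(X)$, so that $\Stab^\dagger_{\rm red}(X)/D=\Q^+_0(X)$. Next I would pass to the global picture: since $\Stab^*(X)$ is the $\Aut\D(X)$-orbit of $\Stab^\dagger(X)$, and since the odd shift $[1]$ preserves $\Stab^\dagger(X)$ (lying in the connected $\C$-action) while acting on cohomology by $-\id\notin\Aut^+_\CY H^*(X)$, the group $\Aut_\CY\D(X)$ already acts transitively on the connected components of $\Stab^*_{\rm red}(X)$. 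Writing $S\subset\Aut_\CY\D(X)$ for the subgroup preserving $\Stab^\dagger(X)$, this transitivity yields
\[ \L_{\rm Kah}(X)=\bigl[\Stab^*_{\rm red}(X)/\Aut_\CY\D(X)\bigr]\isom\bigl[\Stab^\dagger_{\rm red}(X)/S\bigr]. \]

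I would then analyse $S$. Its intersection with $\ker\varpi=\Aut^0\D(X)$ is exactly the deck group $D$, while Hartmann's Lemma (quoted above) produces, for every $g\in\Aut^+_\CY H^*(X)$, an autoequivalence preserving $\Stab^\dagger(X)$ and inducing $g$; such an autoequivalence is Calabi--Yau because $g\in\Aut^+_\CY H^*(X)$, so $\varpi(S)=\Aut^+_\CY H^*(X)$. This gives a short exact sequence
\[ 1\lra D\lra S\lRa{\varpi}\Aut^+_\CY H^*(X)\lra 1, \]
and quotienting in two stages produces
\[ \bigl[\Stab^\dagger_{\rm red}(X)/S\bigr]\isom\bigl[\,(\Stab^\dagger_{\rm red}(X)/D)/\Aut^+_\CY H^*(X)\,\bigr]\isom\bigl[\Q^+_0(X)/\Aut^+_\CY H^*(X)\bigr], \]
which is the first asserted isomorphism. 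For the second I would pass to $\M_{\rm Kah}(X)=\L_{\rm Kah}(X)/\C^*$: on central charges the $\C$-action scales $\Omega$ by $e^{i\pi\lambda}$, hence descends to the free scaling action of $\C^*=\C/2\Z$ on $\Q^+_0(X)$ with quotient the projectivisation $\Omega^+_0(X)$. As scaling commutes with the linear action of $\Aut^+_\CY H^*(X)$, taking this further free quotient yields $\M_{\rm Kah}(X)\isom[\Omega^+_0(X)/\Aut^+_\CY H^*(X)]$.

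For the remaining assertions I would invoke the theory of arithmetic quotients of type IV domains. As recalled in the text, $\Aut^+_\CY H^*(X)$ is the index-two orientation-preserving subgroup of the integral orthogonal group of the lattice $\N(X)$ of signature $(2,\rho(X))$; it is therefore arithmetic and acts properly discontinuously, with finite stabilisers, on the Hermitian symmetric domain $\Omega^+(X)$ and on its $\C^*$-cover $\Q^+(X)$. This immediately gives the Deligne--Mumford property for both quotient stacks. By the Baily--Borel theorem the coarse space $\Omega^+(X)/\Aut^+_\CY H^*(X)$ is quasi-projective; the locally finite arrangement $\bigcup_{\delta\in\Delta(X)}\delta^\perp$ descends to a closed algebraic (Heegner-type) divisor, so its complement---the coarse space of $\M_{\rm Kah}(X)$---is again quasi-projective, and the coarse space of $\L_{\rm Kah}(X)$ is the total space of the associated $\C^*$-bundle over it, hence quasi-projective as well.

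I expect the main obstacle to be the bookkeeping of the first two paragraphs: correctly accounting for the potentially many connected components of $\Stab^*(X)$ and assembling Theorem \ref{bridge}, Hartmann's Lemma and the non-Calabi--Yau shift $[1]$ into the extension $1\to D\to S\to\Aut^+_\CY H^*(X)\to1$, after which the stack isomorphisms are purely formal. The geometric input of the last step---proper discontinuity, the Baily--Borel quasi-projectivity, and the verification that the hyperplane arrangement descends to an algebraic divisor---is standard for lattices of signature $(2,\rho)$ and should present no essential difficulty.
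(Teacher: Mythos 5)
Your proof is correct and follows essentially the same route as the paper's: the extension $1\to\Aut^0\D(X)\to\Aut_{\CY}\D(X)\to\Aut^+_{\CY}H^*(X)\to 1$ (with surjectivity supplied by Hartmann's lemma) combined with the normal covering of Theorem \ref{bridge} restricted to the reduced locus, followed by finiteness of stabilisers and Baily--Borel; you merely spell out the component bookkeeping and the descent of the hyperplane arrangement that the paper leaves implicit. One small caveat: your justification of the transitivity of $\Aut_{\CY}\D(X)$ on the components of $\Stab^*_{\rm red}(X)$ via the shift $[1]$ is not quite right for a general algebraic K3 (the group of transcendental Hodge isometries need not reduce to $\pm\id$), but the correct argument --- every $\Phi\in\Aut\D(X)$ differs from some $\Phi_0$ preserving $\Stab^\dagger(X)$ with $\varpi(\Phi_0)=\varpi(\Phi)$ by an element of $\Aut^0\D(X)\subset\Aut_{\CY}\D(X)$, by Hartmann's lemma --- is already contained in your next paragraph.
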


\begin{proof}
There is a short exact sequence
\[ 1\lra \Aut^0 \D(X) \lra \Aut_{\CY} D(X) \lra \Aut^+_{\CY} H^*(X)\lra 1.\] 
Together with Theorem \ref{bridge} this leads to the given isomorphisms.
The stabilizer of $\Omega \in \Q_0^+(X)$ acts faithfully on $\Omega^\perp \cap \N(X)$; since
$\Re \Omega, \Im \Omega$ span a positive definite 2-plane in $\N(X) \otimes \R$, this lattice
is negative definite, and thus has finite automorphism groups. Therefore,
the stabilizer of $\Omega \in \Q_0^+(X)$ is finite.
Also recall from above that $\Aut ^+_\CY H^*(X)$ has finite index two in $\Aut \N(X)$. 
Thus, the Baily-Borel theorem applies, and the above stacks have quasi-projective coarse moduli spaces.
\end{proof}


\subsection*{Mirror families}
We now relate the space $\M_{\rm Kah}(X)$ to moduli spaces of lattice-polarised K3 surfaces. For this
we need to make

\begin{assumption} Suppose that the transcendental lattice $T(X)$ contains a sublattice isomorphic to the hyperbolic plane.
\end{assumption}

This condition is automatic if $\rho(X)=1$  cf. \cite[Section 7]{Dolgachev:MSK3}. Note that the lattice $\N(X)$ contains a canonical sublattice $H=H^0(X,\Z)\oplus H^4(X,\Z)$ isomorphic to the hyperbolic plane.  
Given the assumption, we can choose another such sublattice $H'\subset T(X)$. Then there are orthogonal direct sums
\[\N(X)=H\oplus M, \quad T(X)=H'\oplus M^\check,\]
where   $M=\NS(X)$ and $M^\check$ is some lattice of signature $(1,18-\rho)$.

Now recall the notion of an ample $M^\check$-polarized K3 surface from \cite[Section 1]{Dolgachev:MSK3};
this includes the data of a K3 surface $\hat X$ together with a primitive isometric
embedding $\rho \colon M^\check \to \NS(\hat X)$ whose image contains an ample class. (The notion
depends on additional choices of data; different choices are equivalent up to pre-composing the
embedding $\rho$ with an isometry of $M^\check$.)

\begin{remark}
There is no separated moduli stack of ample $M^\check$-polarized K3 surfaces for the following well-known reason:
Consider a smooth family $Y \to B$ of K3 surfaces over a one-dimensional base $B$, and assume that
a special fiber $b \in B$ contains a $(-2)$-curve $C$ that does not deform as an algebraic class. Then flopping at $C$
produces a non-isomorphic family $\hat Y \to B$ that is isomorphic to $Y$ after restricting to the
complement of $b$. Note that the central fibers are isomorphic as K3 surfaces, but 
not isomorphic as ample $M^{\check}$-polarized K3 surfaces.
\end{remark}

\begin{lemma}
The orbifold $\M_{\rm Kah}(X)$ admits a family of $M^\check$-polarized K3 surfaces, and its coarse
moduli space is the coarse moduli space $M^\check$-polarized K3 surfaces.
\end{lemma}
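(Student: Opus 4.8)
The plan is to reduce the statement to Dolgachev's theory of moduli of lattice-polarized K3 surfaces \cite{Dolgachev:MSK3}, feeding in the identification $\M_{\rm Kah}(X)\isom \Omega_0^+(X)/\Aut^+_\CY H^*(X)$ of the previous proposition together with a lattice-theoretic dictionary between the Mukai lattice of $X$ and the K3 lattice of the mirror. Write $\Lambda_{K3}=H^2(\hat X,\Z)$ for the K3 lattice, of signature $(3,19)$, and recall the orthogonal decompositions $\N(X)=H\oplus M$ and $T(X)=H'\oplus M^\check$ inside the unimodular Mukai lattice $H^*(X,\Z)$ of signature $(4,20)$.

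First I would establish the mirror isometry $(M^\check)^{\perp}_{\Lambda_{K3}}\isom \N(X)$. Fix a primitive embedding $M^\check\into\Lambda_{K3}$ whose image contains an ample class. A signature count shows that $(M^\check)^{\perp}_{\Lambda_{K3}}$ has signature $(2,\rho)$ and rank $2+\rho$, agreeing with $\N(X)$. Since $\Lambda_{K3}$, $H^*(X,\Z)$ and $H'$ are all unimodular, the discriminant forms satisfy $q_{(M^\check)^{\perp}}\isom -q_{M^\check}\isom -q_{T(X)}\isom q_{\N(X)}$; as these are even indefinite lattices to which Nikulin's uniqueness theorem applies, this forces an isometry $(M^\check)^{\perp}_{\Lambda_{K3}}\isom\N(X)$. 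Under it the $(-2)$-classes in $(M^\check)^{\perp}$ correspond to $\Delta(X)$, so the period domain of $M^\check$-polarized K3 surfaces is identified with $\Omega^\pm(X)$, its \emph{ample} locus (obtained by deleting the hyperplanes orthogonal to $(-2)$-classes) with $\Omega^\pm_0(X)$, and the component singled out by Dolgachev with $\Omega_0^+(X)$.

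Next I would match the two arithmetic groups. By the global Torelli theorem for $M^\check$-polarized K3 surfaces, their coarse moduli is the quotient of the ample period domain by the group $\Gamma_{M^\check}$ of isometries of $(M^\check)^{\perp}$ extending to $\Lambda_{K3}$ fixing $M^\check$ pointwise; by Nikulin's gluing criterion this is precisely the subgroup acting trivially on the discriminant form $q_{(M^\check)^{\perp}}$. On the other side, an element of $\Aut^+_\CY H^*(X)$ is, as recalled in the discussion above, an orientation-preserving isometry of $\N(X)$ that extends to $H^*(X,\Z)$ fixing $T(X)$, i.e. an orientation-preserving isometry acting trivially on $q_{\N(X)}$. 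Since the isometry of the previous step matches the discriminant forms and the distinguished components, it identifies $\Gamma^+_{M^\check}$ with $\Aut^+_\CY H^*(X)$. Combined with the previous proposition, this gives an isomorphism between the coarse space of $\M_{\rm Kah}(X)=\Omega_0^+(X)/\Aut^+_\CY H^*(X)$ and the coarse moduli space of ample $M^\check$-polarized K3 surfaces, the latter being quasi-projective by Baily--Borel exactly as established there.

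Finally, to exhibit the family I would pull back the universal family of marked $M^\check$-polarized K3 surfaces over the ample period domain $\Omega_0^+(X)$: on this open locus the $(-2)$-hyperplanes, whose removal excludes precisely the flopping phenomenon of the preceding remark, have been deleted, so the family is well-behaved. The monodromy action of $\Gamma^+_{M^\check}=\Aut^+_\CY H^*(X)$ lifts to this family, so it descends to a family over the quotient orbifold $\M_{\rm Kah}(X)$, the orbifold structure recording the K3 surfaces with extra automorphisms (the finite stabilizers of the previous proposition). I expect the main obstacle to be the group identification: one must verify, via Nikulin's theory of discriminant forms, not only that the two extension conditions coincide, but also that the orientation convention cutting out $\Omega_0^+(X)$ and Dolgachev's choice of connected component agree, so that the \emph{index-two} orientation-preserving subgroups are matched rather than the full isometry groups. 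Keeping track of the ample condition — the hyperplane removal — is the other delicate point, though it is forced by the correspondence $\Delta(X)\leftrightarrow\{(-2)\text{-classes in }(M^\check)^{\perp}\}$ above.
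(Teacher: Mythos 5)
Your overall strategy (reduce everything to Dolgachev via an identification of period domains and arithmetic groups) matches the paper's, but you take a genuinely different route to the lattice identification: you fix an abstract primitive embedding $M^\check\into\Lambda_{K3}$ and invoke Nikulin's uniqueness theorem and the gluing criterion for discriminant forms to produce an isometry $(M^\check)^\perp_{\Lambda_{K3}}\isom\N(X)$ and to match $\Gamma_{M^\check}$ with $\Aut^+_\CY H^*(X)$. The paper avoids all of this by taking the concrete sublattice $L=H'^\perp\subset H^*(X,\Z)$, which is unimodular of signature $(3,19)$ and hence \emph{is} a copy of the K3 lattice in which $(M^\check)^\perp=\N(X)$ holds on the nose, and in which $\Aut_\CY H^*(X,\Z)$ is literally the group of isometries of $L$ fixing $M^\check$ pointwise. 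Your version works (the hypotheses of Nikulin's uniqueness theorem are satisfied since $\ell(A_{q_{\N(X)}})=\ell(A_{q_M})\le\rho\le\operatorname{rk}\N(X)-2$), but it is more work and requires the checks you yourself flag; the concrete realization makes those checks unnecessary.

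There is, however, a genuine gap in your construction of the family. You assert that a universal family exists over the ample period domain $\Omega_0^+(X)$ because ``the $(-2)$-hyperplanes, whose removal excludes precisely the flopping phenomenon, have been deleted.'' This is not correct: the hyperplanes removed from $\Omega^+(X)$ are only those orthogonal to $\delta\in\Delta(X)\subset\N(X)=(M^\check)^\perp_L$. A spherical class $\delta\in L$ with $\delta\notin\N(X)$ and $\delta\notin M^\check$ can still become algebraic on special fibres over points of $\Omega_0^+(X)$ (namely wherever $\Omega\perp\delta$), and at such points the flop ambiguity of the preceding Remark persists: the two families differ by the reflection $s_\delta$, which acts nontrivially on $M^\check$. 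So the non-separatedness is not resolved by the hyperplane deletion alone, and the ``universal family'' you propose to pull back is not yet well defined. The paper repairs exactly this by choosing a class $l\in M^\check$ with $l^2>0$ that is not orthogonal to any spherical class in $L\setminus\N(X)$, and using the strong global Torelli theorem to attach to each $\Omega\in\Q_0^+(X)$ the unique marked K3 surface on which $l$ is ample; this consistent choice of chamber kills the flop ambiguity and produces the family, which then descends through the $\Aut_\CY H^*(X,\Z)$-action. You need some such rigidification step before the final descent argument can go through.
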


\begin{proof}
 Consider the orthogonal complement $L=H'^\perp\subset H^*(X,\Z)$. Note that $L$ is isomorphic to the K3 lattice $H^2(K3,\Z)$. We have an orthogonal decomposition (not necessarily a direct sum)
\[L=H \oplus M \perp M^\check.\]
In particular, inside $L$ we have  $(M^\check)^\perp= \N(X)$. Note also that $\Aut_{CY} H^*(X,\Z)$ can be identified with the group of automorphisms of $L$ which fix every element of $M^\check$. 

This is exactly the situation considered by Dolgachev in \cite[Section 6]{Dolgachev:MSK3}: in terms
of his notation $\N(X)$ becomes identified with the lattice $N$, the space $\PP Q^0(X)$ becomes
$D_{M^\check}^o$ and the group $\Aut_\CY H^*(X,\Z)$ becomes $\Gamma(M^\check)$. In particular, the 
statement regarding coarse moduli is proved there.
In order to construct a family, let us choose in addition a class 
$l \in M^\check$ with $l^2 > 0$ such that $l$ is not orthogonal to any spherical class
$\delta \in L \setminus \N(X)$; requiring $l$ to be ample avoids the non-Hausdorff issue explained
above.

Now we use the strong global Torelli theorem: given $\Omega \in \Q_0^+(X)$, there exists a unique
K3 surfaces $\hat X$ with a marking $L \xrightarrow{\sim} H^2(\hat X, \Z)$
such that $H^{2,0}(X, \C)$ is spanned by $\Omega$, and such that $l$
is an ample class. These fit together into a family over the period domain, on which 
$\Aut_\CY H^*(X, \Z)$ acts. Taking the quotient by this action produces a family 
over $\M(X^\check)$, and remembers the marking by the sublattice $M^\check$ as claimed.
\end{proof}

Following Dolgachev, we consider this family of $M^\check$-polarised K3 surfaces as a mirror family to the family of (ample) $M$-polarized K3 surfaces, of which our surface $X$ is a member.
Thus in the case $\rho=1$ we can conclude that the group $\Aut_\CY D(X) /[2]$ is isomorphic to the fundamental group of the base of the mirror family.
Alternatively, note that  the full group $\Aut_\CY D(X)$ is isomorphic to the fundamental group of this augmented mirror moduli space
$\L_{\rm Kah}(X)$ parameterizing pairs consisting of an ample $M^\check$-polarized K3 surfaces together with a choice of nonzero holomorphic 2-form.

 \begin{remark}The lattice $M^\check$ and its embedding in the K3 lattice $L$  depends on our choice $H'\subset T(X)$. Different choices lead to different equivalence classes of embeddings and hence different families of $M^\check$-polarised K3 surfaces. The bases of these  families are all identified with the space $\M_{\rm Kah}(X)$, but as families of $M^\check$-polarised K3 surfaces they are different. All should be considered as mirror families of $X$. It is easy to check that  the families of derived categories given by these different mirror families are all the same.    \end{remark}


\begin{remark} \label{rmk:AutFuk}
Finally, we want make to make explicit a 
relation of Conjecture \ref{kyoto} to Homological Mirror Symmetry.
Given an ample $M^\check$-polarized K3 surface $\hat X$, and a K\"ahler form $\omega$, assume that
Homological Mirror Symmetry holds for $X$ and $(\hat X, \omega)$: we assume that
\[ D(X) \cong \mathrm{Fuk}(\hat X, \omega) \]
where $\mathrm{Fuk}(\hat X, \omega)$ is a suitably defined Fukaya category.

Let $\pi_1(\M_{\rm Kah}(X))$ be the orbifold fundamental group of the base of our mirror family of
$M^\check$-polarized K3 surfaces. If Conjecture \ref{kyoto} holds in addition to homological mirror
symmetry, we obtain an isomorphism
\[
\pi_1(\M_{\rm Kah}(X)) \cong \Aut \mathrm{Fuk}(\hat X, \omega).
\]
Presumably, any proof of Homological Mirror Symmetry would allow us to identify this isomorphism
with a map induced by the monodromy in the mirror family.

\end{remark}

\appendix
\section{Calabi-Yau autoequivalences}
In this Appendix we explain that an autoequivalence $\Phi\in \Aut \D(X)$ is Calabi-Yau in the sense used above precisely if
$\Phi$ respects the Serre duality pairings
\begin{equation} \label{eq:Serreduality}
\Hom^i(E, F) \times \Hom^{-i}(F, E[2]) \to \C
\end{equation}
induced by a choice of holomorphic volume form $\omega \in H^{2,0}(X, \C)$.

A proof would be more natural in the setting of dg-categories; thus we restrict ourselves to a
sketch of the arguments.
First we recall some basic definitions and results on Serre functors from \cite[Section 1]{Bondal-Orlov}:
\begin{itemize}
\item A \emph{graded autoequivalence} is an autoequivalence $\Phi$ together with a natural transformation
$\Phi \circ [1] \Rightarrow [1] \circ \Phi$.
Any exact autoequivalence has the structure of a graded autoequivalence.
\smallskip

\item A \emph{Serre functor} is a graded autoequivalence $S$ together with functorial isomorphisms
\[ 
\Hom(A, B) \cong \Hom(B, S(A))^*
\]
satisfying an extra condition.\smallskip

\item A Serre functor is unique up to a canonical graded natural transformation.
\smallskip

\item Given an equivalence
$\Phi$, there is a canonical natural isomorphism 
\[ \Phi \circ S \Rightarrow S \circ \Phi. \]
\end{itemize}

Let us define a Calabi-Yau-2 category to be a triangulated category together with natural isomorphism
$\tau \colon [2] \Rightarrow S$, where $S$ is a Serre functor. By the canonical uniqueness of Serre functors, this is equivalent
to specifying functorial Serre duality pairings as in \eqref{eq:Serreduality}. 

A graded autoequivalence acts on the set of natural transformations $[2] \Rightarrow S$ via
conjugation, and the given natural transformations $\Phi \circ [2] \Rightarrow [2] \circ \Phi$
and $\Phi \circ S \Rightarrow S \circ \Phi$. From the construction of the latter transformation, it
follows that $\Phi$ leaves $\tau$ invariant if and only if it respects the Serre duality pairings
\eqref{eq:Serreduality}.

On the other hand, the induced actions of $\Phi$ on the cohomology $H^*(X, \C)$ of $X$ and
the Hochschild homology of $\D(X)$ are compatible with the HKR isomorphism, see
\cite[Theorem 1.2]{Emanuele-Paolo:infinitesimal}. Therefore,
$\Phi$ is Calabi-Yau in the sense of Definition \ref{def:CYequivalence} if and only if it
leaves the second Hochschild homology group
\begin{equation} \label{eq:hochschild}
\mathrm{HH}_2 (X) = \Hom_{\D(X \times X)}\left(\O_{\Delta}[2], \omega_{\Delta}[2]\right)
\end{equation}
invariant.

Passing to Fourier-Mukai transforms induces a natural map
\[
\Hom_{\D(X \times X)}(\O_\Delta[2], \omega_\Delta[2]) \to \Hom([2], S),
\]
compatible with the action of $\Phi$.
While it is \emph{not} true (without passing to dg-categories) for any pair of kernels that the
above map is an isomorphism, it does hold in our situation: both sides are (non-canonically)
isomorphic to scalars $\C$, and the map is non-trivial.
Thus, $\Phi$ is Calabi-Yau in the sense of Definition \ref{def:CYequivalence} if and only if it 
respects the natural transformation $[2] \Rightarrow S$, or, equivalently, respects the Serre
duality pairings \eqref{eq:Serreduality}.

\bibliography{all}  
\bibliographystyle{halpha}

\end{document}